\documentclass[sn-mathphys-num]{sn-jnl}


\usepackage{graphicx}%
\usepackage{multirow}%
\usepackage{amsmath,amssymb,amsfonts}%
\usepackage{amsthm}%
\usepackage{epsfig}
\usepackage{epstopdf}
\usepackage{mathrsfs}%
\usepackage[title]{appendix}%
\usepackage{xcolor}%
\usepackage{textcomp}%
\usepackage{manyfoot}%
\usepackage{booktabs}%
\usepackage{algorithm}%
\usepackage{algorithmicx}%
\usepackage{algpseudocode}%
\usepackage{listings}%
\usepackage{cases}%
\graphicspath{{fig/}}
\usepackage{tikz}


\theoremstyle{thmstyleone}%
\newtheorem{theorem}{Theorem}
\newtheorem{proposition}[theorem]{Proposition}%

\theoremstyle{thmstyletwo}%
\newtheorem{remark}{Remark}%
\newtheorem{assumption}{Assumption}
\newtheorem{lemma}{Lemma}
\newtheorem{corollary}{Corollary}%

\theoremstyle{thmstylethree}%
\newtheorem{definition}{Definition}%

\raggedbottom

\begin{document}

\title[Convergence of the majorized PAM method with subspace correction for low-rank composite factorization model]{Convergence of the majorized PAM method with subspace correction for low-rank composite factorization model}


\author[1]{\fnm{Ting} \sur{Tao}}\email{taoting@fosu.edu.cn}

\author[2]{\fnm{Yitian} \sur{Qian}}\email{yitian.qian@polyu.edu.hk}

\author*[3]{\fnm{Shaohua} \sur{Pan}}\email{shhpan@scut.edu.cn}

\affil[1]{\orgdiv{School of Mathematics}, \orgname{Foshan University}, \orgaddress{\street{Zhangcha Street}, \city{Foshan}, \postcode{528000}, \state{Guangdong}, \country{China}}}

\affil[2]{\orgdiv{Department of Data Science and Artificial Intelligence}, \orgname{The Hong Kong Polytechnic University}, \orgaddress{\street{Hung Hom Street}, \city{Hong Kong}, \postcode{999077}, \country{China}}}

\affil[3]{\orgdiv{School of Mathematics}, \orgname{South China University of Technology}, \orgaddress{\street{Wushan Street}, \city{Guangzhou}, \postcode{510641}, \state{Guangdong}, \country{China}}}


\abstract{This paper focuses on the convergence certificates of the majorized proximal alternating minimization (PAM) method with subspace correction, proposed in \cite{TaoQianPan22} for the column $\ell_{2,0}$-norm regularized factorization model and now extended to a class of low-rank composite factorization models from matrix completion. The convergence analysis of this PAM method becomes extremely challenging because a subspace correction step is introduced to every proximal subproblem to ensure a closed-form solution. We establish the full convergence of the iterate sequence and column subspace sequences of factor pairs generated by the PAM, under the KL property of the objective function and a condition that holds automatically for the column $\ell_{2,0}$-norm function. Numerical comparison with the popular proximal alternating linearized minimization (PALM) method is conducted on one-bit matrix completion problems, which indicates that the PAM with subspace correction has an advantage in seeking lower relative error within less time.}

\keywords{Low-rank factorization models, alternating minimization methods, subspace correction, global convergence, KL property}


\pacs[MSC Classification]{49J52,  90C26,  49M27}

\maketitle

\section{Introduction}\label{sec1}
 Let $\mathbb{R}^{n\times m}$ denote the space of all $n\times m$ real matrices, equipped with trace inner product $\langle\cdot,\cdot\rangle$ and its induced Frobenius norm $\|\cdot\|_F$. Fix any $r\in[n]$ and let $\mathbb{X}_r\!:=\mathbb{R}^{n\times r}\times\mathbb{R}^{m\times r}$. We are interested in the following low-rank composite factorization model
\begin{equation}\label{prob}
  \min_{(U,V)\in\mathbb{X}_r}\Phi_{\lambda,\mu}(U,V):=f(UV^{\top}) +\lambda\sum_{i=1}^r\big[\theta(\|U_i\|)+\theta(\|V_i\|)\big]
  +\frac{\mu}{2}\big[\|U\|_F^2+\|V\|_F^2\big],
  \end{equation}
 where $f\!:\mathbb{R}^{n\times m}\to\mathbb{R}$ is a lower bounded $L_{\!f}$-smooth function (i.e., $f$ is differentiable and its gradient is Lipschitz continuous with modulus $L_{\!f}$), $\lambda>0$ and $\mu>0$ are the regularization parameters, and $\theta\!:\mathbb{R}\to\overline{\mathbb{R}}:=(-\infty,\infty]$ is a proper lower semicontinuous (lsc) function to promote sparsity and satisfy the following two conditions:

 \begin{description}
  \item[\bf(C.1)] $\theta(0)=0$, $\theta(t)>0$ for $t>0$, and $\theta(t)=\infty$ for $t<0$;

  \item[\bf(C.2)] $\theta$ is differentiable on $\mathbb{R}_{++}$, and its proximal mapping has a closed-form.
 \end{description}
 For convenience, in the sequel, we assume that $n\le m$, and $l=n$ or $m$, and write
 \begin{equation}\label{def-vtheta}
  F(U,V):=f(UV^{\top})\ \ {\rm for}\ (U,V)\in\mathbb{X}_{r}  \ \ {\rm and}\ \ \vartheta(W):={\textstyle\sum_{i=1}^r}\theta(\|W_i\|)\ \ {\rm for}\ W\!\in\mathbb{R}^{l\times r}.
\end{equation}

 The column sparsity regularization term $\lambda\sum_{i=1}^r[\theta(\|U_i\|)+\theta(\|V_i\|)]$ aims at reducing rank because the positive integer $r$ is usually a rough upper estimation for the true rank. Table \ref{tab1} below provides some common $\theta$ to satisfy conditions (C.1)-(C.2), where $\theta_4$ satisfies (C.2) in view of \cite{Cao13,Xu12}. When $\theta=\theta_1$, model \eqref{prob} becomes the column $\ell_{2,0}$-norm regularization problem studied in \cite{TaoQianPan22}; when $\theta=\theta_2$, it reduces to the factorization form of the nuclear-norm regularization problem (see \cite{Chen20,Hastie15}). The regularization term $\frac{\mu}{2}[\|U\|_F^2+\|V\|_F^2]$ plays a twofold role: one is to guarantee that \eqref{prob} has a nonempty set of optimal solutions and then a nonempty set of stationary points, and the other is to promote a balanced set of stationary points (see Proposition \ref{balance}).  Model \eqref{prob} has a wide application in matrix completion and sensing (see, e.g., \cite{Bhojanapalli16,Cai13,Jain13,SunLuo16}). 
 \begin{table}[h]
  \setlength{\abovecaptionskip}{-0cm}
  \setlength{\belowcaptionskip}{-0.01cm}
  \small
  \centering
  \caption{Some common functions satisfying conditions (C.1)-(C.2)}\label{tab1}
  \begin{tabular}{|c|c|}
  \hline
  $\theta_{1}(t)=\left\{\begin{array}{cl}
  {\rm sign}(t) &{\rm if}\ t\in\mathbb{R}_{+},\\
  \infty&{\rm otherwise}
 \end{array}\right.$ &	
  $\theta_{2}(t)=\left\{\begin{array}{cl}
  t^2 &{\rm if}\ t\in\mathbb{R}_{+},\\
  \infty&{\rm otherwise}
  \end{array}\right.$\\
  \hline	
  $\theta_{3}(t)=\left\{\begin{array}{cl}
  t &{\rm if}\ t\in\mathbb{R}_{+},\\
  \infty&{\rm otherwise}
 \end{array}\right.$ &
 $\theta_{4}(t)=\left\{\begin{array}{cl}
  t^{p} &{\rm if}\ t\in\mathbb{R}_{+},\\
  \infty&{\rm otherwise}
 \end{array}\right.\ {\rm for}\ p=1/2\ {\rm or}\ {2}/{3}$\\ 
 \hline
 \end{tabular}
 \end{table} 
 \subsection{Related work}\label{sec1.1}
 Problem \eqref{prob} is a special case of the composite optimization models considered in \cite{Attouch10,Bolte14,Chouzenoux16,Ochs19,Pock16,XuYin17}, where the nonsmooth regularization term has a separable structure and the nonsmooth function associated with each block has a closed-form proximal mapping. Hence, the proximal alternating linearized minimization (PALM) methods and their inertial versions developed in \cite{Attouch10,Bolte14,Pock16,XuYin17} are applicable to it. 
 The basic idea of these PALM methods is to minimize alternately the proximal version of the linearization of $\Phi_{\lambda,\mu}$ at the current iterate $(U^k,V^k)$. Their iterations depend on the Lipschitz modulus of the partial gradients $\nabla_{U}F(\cdot,V^k)$ and $\nabla_{V}F(U^k,\cdot)$, which are respectively $L_1(V^k)=L_{\! f}\|V^k\|^2$ and $L_1(U^k)=L_{\! f}\|U^{k}\|^2$. These constants are available once that of $\nabla\!f$ is known, but they are usually much larger than that of $\nabla\!f$; for example, when the spectral norm $\|U^{k}\|$ is greater than $10^3$ and $L_{\! f}\ge 1$, the Lipschitz modulus $L_1(V^k)$ is more than $10^6$. A similar case also occurs to $L_1(U^k)$. This brings a great difficulty to first-order methods for computing subproblems because their iteration complexity and local convergence rate become worse as $L_1(V^k)$ or $L_1(U^k)$ increases. Then, it is natural to ask whether an efficient alternating minimization (AM) algorithm can be designed by using the Lipschitz continuity of $\nabla\!f$ only. The block coordinate variable metric algorithms in \cite{Chouzenoux16,Ochs19} can be used to solve \eqref{prob}, but their efficiency depends on the appropriate choice of variable metric linear operators, and their convergence analysis requires the exact solutions of variable metric subproblems. Now it is unclear which kind of variable metric linear operators is suitable for problem \eqref{prob}, and whether the variable metric subproblems involving $\vartheta$ have a closed-form solution.

 Recently, for problem \eqref{prob} with $\theta=\theta_1$, the authors made an attempt to design an AM method by leveraging the Lipschitz continuity of $\nabla\!f$ only; see \cite[Algorithm 2]{TaoQianPan22}. They imposed a subspace correction strategy on every subproblem so that the corrected subproblems have a closed-form solution. Undoubtedly, this brings a great challenge to its convergence analysis, especially the full convergence analysis of the iterate sequence, though numerically it was 
 validated to be potentially superior to the PALM in terms of the quality of solutions and the running time via matrix completion problems with $f(X)=\frac{1}{2}\|P_\Omega(X\!-\!M)\|_F^2$. Until now, there are no convergence results for the generated iterate sequence. It is worth pointing out that such a subspace correction technique first appeared in softImpute-ALS, an alternating least squares method proposed in \cite{Hastie15} for the nuclear-norm regularized least squares factorization model, but as far as we know, there are no any convergence results obtained for it.
 \subsection{Main contribution}\label{sec1.2}

 The main contribution of this work is to conduct a systematic convergence analysis for the subspace correction AM method proposed in \cite{Hastie15,TaoQianPan22}, and provide a convergence certificate for its iterate sequence. Specifically, we prove the subsequence convergence of the iterate sequence when $\theta$ takes any of $\theta_i$ in Table \ref{tab1}, and establish the convergence of the whole iterate sequence and column subspace sequences of factor pairs under the KL property of $\Phi_{\lambda,\mu}$ and the restricted condition in \eqref{ass3-Uk}-\eqref{ass3-Vk}, which can be satisfied by $\theta_1$, or by $\theta_4$ if there is an  accumulation point with distinct nonzero singular values. To the best of our knowledge, this is the first subspace correction AM method with convergence certificate for low-rank composite factorization models. Hastie et al. \cite{Hastie15} proposed a proximal AM algorithm with subspace correction (named softImpute-ALS) for the factorization form of the nuclear-norm regularized least squares model, but did not provide the convergence analysis of the iterate sequence even the objective value sequence. In fact, our Theorem \ref{converge-obj} on the convergence of the objective value sequence and Theorem \ref{Sub-convergence} on the subsequence convergence are applicable to softImpute-ALS.

 As will be shown in Section \ref{sec3}, our proximal AM algorithm is actually a variable metric proximal AM method, but different from the variable metric methods in \cite{Chouzenoux16,Ochs19}, our variable metric linear operators are natural products of majorizing $F$. In particular, by introducing a subspace correction step to per subproblem, we overcome the difficulty that the variable metric proximal subproblems have no closed-form solutions.

  \subsection{Notation}\label{sec1.3}
 In this paper, $\mathbb{O}^{n_1\times n_2}$ represents the set of all $n_1\times n_2$ real matrices with orthonormal columns,  $\mathbb{O}^{n_1}$ stands for $\mathbb{O}^{n_1\times n_1}$, and $I_r$ denotes the $r\times r$ identity matrix. For an integer $k\ge 1$, write $[k]\!:=\{1,\ldots,k\}$. For a matrix $X\!\in\mathbb{R}^{ n\times m}$, $\|X\|,\,\|X\|_*$ and $\|X\|_{2,0}$ denote the spectral norm, nuclear norm, and column $\ell_{2,0}$-norm of $X$, respectively, $\mathbb{B}(X,\delta)$ denotes the closed ball centered at $X$ with radius $\delta$ on the Frobenius norm, $\sigma(X)\!:=(\sigma_1(X),\ldots,\sigma_n(X))^{\top}$ with $\sigma_1(X)\ge\cdots\ge\sigma_n(X)$, $\Sigma_{\kappa}(X)\!:={\rm Diag}(\sigma_1(x),\ldots,\sigma_{\kappa}(X))$ and $\mathbb{O}^{n,m}(X)\!:=\{(U,V)\in\mathbb{O}^{n}\times\mathbb{O}^{m}\ |\ X\!=\!U[{\rm Diag}(\sigma(X))\ \ 0]V^{\top}\}$. For any $t\in \mathbb{R}$, ${\rm sgn}(t)=1$ for $t\in \mathbb{R}_{+}$, otherwise ${\rm sgn}(t)=-1$. For a matrix $Z\in\mathbb{R}^{n_1\times n_2}$, $Z_j$ denotes the $j$th column of $Z$, $J_{Z}$ represents its index set of nonzero columns, and ${\rm col}(Z)$ and ${\rm row}(Z)$ denote the subspace spanned by all columns and rows of $Z$. For an index set $J\subset[r]$, let $\overline{J}:=[r]\backslash J$ and define
 \begin{equation}\label{def-vthetaJ}
 \vartheta_{\!J}(Z_J)={\textstyle\sum_{i\in J}}\,\theta(\|Z_i\|)\ \  {\rm and}\  \ \vartheta_{\overline{J}}(Z_{\overline{J}})={\textstyle\sum_{i\in \overline{J}}}\,\theta(\|Z_i\|)\ \ {\rm for}\
 Z\in\mathbb{R}^{l\times r}.
\end{equation}
\section{Preliminaries}\label{sec2}

Recall that for a proper lsc function $h\!:\mathbb{R}^n\to\overline{\mathbb{R}}$, its proximal mapping associated with parameter $\gamma>0$ is defined as
 \(
   \mathcal{P}_{\!\gamma}h(z)\!:=\!\mathop{\arg\min}_{x\in \mathbb{R}^n}
   \big\{\frac{1}{2\gamma}\|z\!-\!x\|^2+h(x)\big\}
 \)
 for $z\in\mathbb{R}^n$. The mapping $\mathcal{P}_{\!\gamma}h$ is generally multi-valued unless the function $h$ is convex. The following lemma presents the proximal mapping of the function $\vartheta$ in \eqref{def-vtheta} with the help of that of $\theta$. Since its proof is trivial by condition (C.1), we omit it.
 \begin{lemma}\label{lemma-proxmap}
 Fix any $\gamma>0$ and $u\in\mathbb{R}^n$. Let $S^*$ be the solution set of the problem
 \begin{equation*}
 \min_{x\in\mathbb{R}^n}\Big\{\frac{1}{2}\|\gamma x -u\|^2+\lambda\theta(\|x\|)\Big\}.
 \end{equation*}
 Then, $S^*=\{0\}$ when $u=0$; otherwise $S^*=\frac{u}{\|u\|}\mathcal{P}_{\!\lambda/\gamma^2}\theta(\|u\|/\gamma)$.
 \end{lemma}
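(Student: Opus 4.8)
The plan is to reduce the $n$-dimensional problem to a scalar one by exploiting that the objective depends on $x$ only through $\|x\|$ and $\langle x, u\rangle$. Write $\phi(x) := \frac{1}{2}\|\gamma x - u\|^2 + \lambda\theta(\|x\|)$ and expand $\frac{1}{2}\|\gamma x - u\|^2 = \frac{\gamma^2}{2}\|x\|^2 - \gamma\langle x, u\rangle + \frac{1}{2}\|u\|^2$.

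First I would dispose of the case $u = 0$. Then $\phi(x) = \frac{\gamma^2}{2}\|x\|^2 + \lambda\theta(\|x\|)$, and since $\gamma, \lambda > 0$ while (C.1) gives $\theta(t) \geq 0$ for all $t$ with $\theta(0) = 0$, every term is nonnegative and the sum vanishes only at $x = 0$; hence $S^* = \{0\}$.

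For $u \neq 0$ the key is an alignment argument. For any fixed radius $t := \|x\| \geq 0$, the only term of $\phi$ still depending on the direction of $x$ is $-\gamma\langle x, u\rangle$, which by the Cauchy--Schwarz inequality is minimized precisely at $x = t\,u/\|u\|$; when $t = 0$ this is vacuous since $x = 0$ is the unique point of radius $0$. Consequently every minimizer of $\phi$ must have the form $x = t\,u/\|u\|$ for some $t \geq 0$ (recall $\theta(t) = \infty$ for $t < 0$ by (C.1), so admissible radii are automatically nonnegative), and minimizing $\phi$ over $\mathbb{R}^n$ reduces to minimizing over $t \in \mathbb{R}$ the scalar objective $\frac{\gamma^2}{2}t^2 - \gamma t\|u\| + \frac{1}{2}\|u\|^2 + \lambda\theta(t)$.

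Finally I would complete the square to rewrite this scalar objective as $\frac{\gamma^2}{2}\big(t - \|u\|/\gamma\big)^2 + \lambda\theta(t)$, whose set of minimizers over $t$ is unchanged upon dividing by $\gamma^2$ and thus equals $\mathcal{P}_{\lambda/\gamma^2}\theta(\|u\|/\gamma)$ by the definition of the proximal mapping. Mapping each optimal radius $t^*$ back via $x^* = t^*\,u/\|u\|$ then yields $S^* = \frac{u}{\|u\|}\mathcal{P}_{\lambda/\gamma^2}\theta(\|u\|/\gamma)$. The one point needing care is the alignment step: one must check that a non-aligned minimizer would be strictly improved by the replacement $x \mapsto \|x\|\,u/\|u\|$, which holds because Cauchy--Schwarz is strict whenever $x$ is not a nonnegative multiple of $u$; the remaining manipulations are routine.
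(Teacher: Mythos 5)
Your proof is correct: the case $u=0$, the Cauchy--Schwarz alignment argument reducing to the scalar problem in $t=\|x\|$, and the completion of the square identifying the optimal radii with $\mathcal{P}_{\!\lambda/\gamma^2}\theta(\|u\|/\gamma)$ are exactly the intended reasoning, which the paper omits as ``immediate by condition (C.1).'' Your closing remark on the strict Cauchy--Schwarz inequality is precisely the point that makes the set equality (rather than a mere inclusion) hold, so nothing is missing.
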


\subsection{Stationary points of problem \eqref{prob}}\label{sec2.2}

 To introduce stationary points of \eqref{prob}, we recall from \cite{RW98} the (basic) subdifferential.
 \begin{definition}\label{gsubdiff}
  Consider a function $h\!:\mathbb{R}^{n\times m}\to\overline{\mathbb{R}}$
  and a point $x$ with $h(x)$ finite.
  The regular subdifferential of $h$ at $x$, denoted by
  $\widehat{\partial}h(x)$, is defined as
  \[
    \widehat{\partial}h(x):=\bigg\{v\in\mathbb{R}^{n\times m}\ \big|\
    \liminf_{x\ne x'\to x}
    \frac{h(x')-h(x)-\langle v,x'-x\rangle}{\|x'-x\|_F}\ge 0\bigg\},
  \]
  and the basic (known as limiting or Morduhovich) subdifferential of $h$ at $x$ is defined as
  \[
    \partial h(x):=\Big\{v\in\mathbb{R}^{n\times m}\ |\  \exists\,x^k\to x\ {\rm with}\ h(x^k)\to h(x)\ {\rm and}\ v^k\to v\ {\rm with}\
    v^k\in\widehat{\partial}h(x^k)\Big\}.
  \]
 \end{definition}

 The following lemma characterizes the subdifferential of the function $\vartheta$ in \eqref{def-vtheta}. 
\begin{lemma}\label{subdiff-vtheta}
 Let $\phi(x)\!:=\theta(\|x\|)$ for $x\in\mathbb{R}^l$. Consider any $(\overline{U},\overline{V})\in\mathbb{X}_r$. Then, it holds $\partial\vartheta(\overline{U})=\overline{\mathcal{U}}_1\times\cdots\times\overline{\mathcal{U}}_r$ and $\partial\vartheta(\overline{V})=\overline{\mathcal{V}}_1\times\cdots\times\overline{\mathcal{V}}_r$ with
 \[
  \mathcal{\overline{U}}_j\!=\!\left\{\begin{array}{cl}
  \Big\{\frac{\theta'(\|\overline{U}_j\|)\overline{U}_j}{\|\overline{U}_j\|}\Big\}&\ {\rm if}\ \overline{U}_j\!\ne\! 0,\\
  \partial\phi(0)&\ {\rm if}\ \overline{U}_{\!j}\!=\!0
  \end{array}\right.\ {\rm and}\
  \mathcal{\overline{V}}_j\!=\!\left\{\begin{array}{cl}
  \Big\{\frac{\theta'(\|\overline{V}_j\|)\overline{V}_j}{\|\overline{V}_j\|}\Big\}&\ {\rm if}\ \overline{V}_j\!\ne \!0,\\
  \partial\phi(0)&\ {\rm if}\ \overline{V}_{\!j}=0,
  \end{array}\right.
 \]
 When $\theta$ is any of the functions in Table \ref{tab1}, it holds $0\in\partial\phi(0)$. 
 \end{lemma}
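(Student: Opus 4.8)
The plan is to combine the fully separable structure of $\vartheta$ with a chain rule for the composition $\theta\circ\|\cdot\|$, treating the nonzero columns and the zero columns separately. First I would observe from \eqref{def-vtheta} that $\vartheta(W)=\sum_{j=1}^r\theta(\|W_j\|)$ decouples completely across the columns $W_1,\dots,W_r$, each summand depending on a single column. Hence the subdifferential rule for separable functions in \cite[Proposition 10.5]{RW98} applies and yields $\partial\vartheta(\overline{U})=\partial g(\overline{U}_1)\times\cdots\times\partial g(\overline{U}_r)$ with $g(w):=\theta(\|w\|)$, and likewise $\partial\vartheta(\overline{V})=\partial g(\overline{V}_1)\times\cdots\times\partial g(\overline{V}_r)$. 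This already produces the product form in the statement, so the entire task reduces to identifying the single-block subdifferential $\partial g(w)$.

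When $w\neq 0$, both factors in $g=\theta\circ\|\cdot\|$ are differentiable at $w$: the Euclidean norm is continuously differentiable with gradient $w/\|w\|$, and $\theta$ is differentiable on $\mathbb{R}_{++}$ by condition (C.2). The classical chain rule then gives $\nabla g(w)=\theta'(\|w\|)\,w/\|w\|$, which is exactly the singleton listed in the case $\overline{U}_j\neq 0$ (respectively $\overline{V}_j\neq 0$). When $w=0$ the inner norm is nonsmooth, so instead I would invoke the scalarized chain rule \cite[Theorem 10.49]{RW98} for $\theta\circ\phi$ with scalar inner map $\phi:=\|\cdot\|$ and $\phi(0)=0$. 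This delivers $\partial g(0)=\bigcup_{y\in\partial\theta(\phi(0))}\partial(y\phi)(0)=\bigcup_{y\in\partial\theta(0)}\partial(y\|\cdot\|)(0)$, precisely the union appearing in the case $\overline{U}_j=0$ (respectively $\overline{V}_j=0$).

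The only delicate point is this last invocation at the origin, because \cite[Theorem 10.49]{RW98} is an inclusion subject to a qualification condition rather than an automatic identity, and here the outer function $\theta$ may be extended-real-valued (as for $\theta_1$), so a priori horizon subgradients could enter. I would dispatch this by exploiting that the inner map $\phi=\|\cdot\|$ is finite-valued, convex, and globally Lipschitz; its horizon subgradients at the origin are trivial and the convex norm is Clarke regular, which is what both validates the qualification and upgrades the generic inclusion to the asserted equality. The remaining verification—that, under conditions (C.1)--(C.2), the resulting singleton and union are exactly the closed forms displayed for $\overline{\mathcal{U}}_j$ and $\overline{\mathcal{V}}_j$—is routine bookkeeping, which I would omit.
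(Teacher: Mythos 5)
Your skeleton is exactly the paper's: the paper's entire proof is the citation of \cite[Proposition 10.5 \& Theorem 10.49]{RW98}, and your separability step and your nonzero-column step carry that out correctly (modulo one caveat shared with the paper: for the \emph{limiting} subdifferential at $w\ne 0$ to collapse to the singleton $\{\theta'(\|w\|)w/\|w\|\}$ one needs $\theta'$ to be continuous near $\|w\|$, not merely to exist pointwise; all of $\theta_1$--$\theta_6$ satisfy this). The genuine gap is the zero-column case --- precisely the point you flag as delicate and then dispatch with an argument that does not work. The qualification condition in \cite[Theorem 10.49]{RW98} constrains the \emph{outer} function, not the inner one: it requires that the only $y\in\partial^{\infty}\theta(\|w\|)$ with $0\in\partial(y\|\cdot\|)(w)$ be $y=0$. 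At $w=0$ one has $0\in\partial(y\|\cdot\|)(0)=\{v:\|v\|\le y\}$ for \emph{every} $y>0$, so the qualification amounts to saying that $\partial^{\infty}\theta(0)$ contains no positive scalar. For $\theta_1$ this fails outright: $\theta_1$ jumps from $0$ to $1$ at the origin, so $\widehat{\partial}\theta_1(0)=\partial\theta_1(0)=\partial^{\infty}\theta_1(0)=\mathbb{R}$; the same happens for $\theta_4,\theta_5$ (infinite slope at $0$). That the norm is finite, convex, globally Lipschitz, regular, and has trivial horizon subgradients is all true and all beside the point, since none of it bears on $\partial^{\infty}\theta(0)$. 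For these $\theta$ the theorem is simply inapplicable at a zero column, and the formula must be checked by hand (which is easy here: both $\partial(\theta\circ\|\cdot\|)(0)$ and the union equal all of $\mathbb{R}^n$, every vector being a regular subgradient of a function that jumps upward, respectively has infinite slope, at the origin).

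The second half of your dispatch --- upgrading the inclusion of Theorem 10.49 to an equality --- is not routine bookkeeping either; under (C.1)--(C.2) alone it is unprovable because it is false. Take $\theta=\theta_2$, which the paper admits: then $\theta_2\circ\|\cdot\|=\|\cdot\|^2$ is smooth, so $\partial(\theta_2\circ\|\cdot\|)(0)=\{0\}$, while $\partial\theta_2(0)=(-\infty,0]$ and, for $y<0$, the concave function $y\|\cdot\|$ has $\widehat{\partial}(y\|\cdot\|)(0)=\emptyset$ but $\partial(y\|\cdot\|)(0)=\{v:\|v\|=|y|\}$, whence $\bigcup_{y\in\partial\theta_2(0)}\partial(y\|\cdot\|)(0)=\mathbb{R}^n\supsetneq\{0\}$ (the same computation defeats $\theta_3$). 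This also shows why regularity of the norm cannot help: it is the scalarizations $y\|\cdot\|$ with $y<0$ that fail to be regular at the origin. In fairness, this defect is inherited from the paper itself: with $\partial$ the limiting subdifferential of Definition \ref{gsubdiff}, Lemma \ref{subdiff-vtheta} read as an equality is false for $\theta_2,\theta_3$; what Theorem 10.49 actually delivers, when its qualification holds, is only the outer estimate $\partial\vartheta(\overline{U})\subset\overline{\mathcal{U}}_1\times\cdots\times\overline{\mathcal{U}}_r$, and that inclusion is all the paper uses downstream (e.g.\ to conclude $0\in\Gamma$ in the proof of Theorem \ref{Sub-convergence}). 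A correct write-up should therefore either claim only the inclusion, verifying the qualification of Theorem 10.49 (or arguing directly when it fails, as for $\theta_1,\theta_4,\theta_5$), or establish the zero-column identity case by case for the particular $\theta$ at hand.
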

 \begin{proof}
 Note that $\vartheta(U)=\sum_{j=1}^r\phi(U_j)$ for $U\in\mathbb{R}^{n\times r}$. Invoking \cite[Proposition 10.5]{RW98} leads to $\partial\vartheta(\overline{U})=\partial\phi(\overline{U}_1)\times\cdots\times\partial\phi(\overline{U}_r)$. Fix any $j\in[r]$. When $\overline{U}_j\ne 0$, using condition (C.2) leads to $\partial\phi(\overline{U}_j)=\big\{\theta'(\|\overline{U}_j\|)\frac{U_j}{\|U_j\|}\big\}$. When $\overline{U}_j=0$, by Definition \ref{gsubdiff}, it is not hard to calculate that $\widehat{\partial}\phi(0)=\mathbb{R}$ for $\theta=\theta_1$ and $\theta_4$, $\widehat{\partial}\phi(0)=\{0\}$ for $\theta=\theta_2$, and $\widehat{\partial}\phi(0)=\mathbb{B}$ for $\theta=\theta_3$, where $\mathbb{B}$ is the unit ball of $\mathbb{R}^{l}$ centered at the origin. Together with $\widehat{\partial}\phi(0)\subset\partial\phi(0)$, we have $0\in\partial\phi(0)$ for $\theta=\theta_1$-$\theta_4$. The proof is completed.  
 \end{proof}

 Now we are ready to introduce the notion of stationary points for problem \eqref{prob}. 
 \begin{definition}\label{def-spoint}
 A factor pair $(\overline{U},\overline{V})\in\mathbb{X}_r$ is called a stationary point of problem \eqref{prob} if
 \[
   0\in\partial\Phi_{\lambda,\mu}(\overline{U},\overline{V})
   =\begin{pmatrix}
    \nabla f(\overline{U}\overline{V}^{\top})\overline{V} +\mu \overline{U}+\lambda[\,\mathcal{\overline{U}}_1\times\cdots\times\mathcal{\overline{U}}_r]\\
    \big[\nabla f(\overline{U}\overline{V}^{\top})\big]^{\top}\overline{U}+\mu \overline{V}+\lambda[\,\mathcal{\overline{V}}_1\times\cdots\times\mathcal{\overline{V}}_r]
    \end{pmatrix},
  \]
  where, for each $j\in[r]$, the sets $\mathcal{\overline{U}}_j$ and $\mathcal{\overline{V}}_j$ take the same form as in Lemma \ref{subdiff-vtheta}.
 \end{definition}

\subsection{Relation between column $\ell_{2,p}$-norm and Schatten $p$-norm}\label{sec2.3}

 For any $p>0$, the column $\ell_{2,p}$-norm and Schatten $p$-norm of a matrix $X\in\mathbb{R}^{n\times m}$ are respectively defined as $\|X\|_{2,p}\!:=\big(\sum_{j=1}^m\|X_j\|^p\big)^{{1}/{p}}$ and $\|X\|_{\rm S_p}\!:=\big(\sum_{i=1}^n[\sigma_i(X)]^p\big)^{{1}/{p}}$.
 The following lemma states that $\|X\|_{\rm S_p}$ is not more than $\ell_{2,p}$-norm $\|X\|_{2,p}$ if $p\in(0,1]$.
\begin{lemma}\label{lemma-Sp}
 Fix any $X\in\mathbb{R}^{n\times m}$. For any $p\in (0,1]$, it holds $\|X\|^{p}_{\rm S_{p}}\le\|X\|^{p}_{2,p}$. 
 \end{lemma}
 \begin{proof}
  Fix any $p\in (0,1]$. Write $r={\rm rank}(X)$. We first claim that for any $R\in\mathbb{O}^{d\times r}$ with $r\le d$, 
  \begin{equation}\label{temp-spnorm}
  \|X\|^{p}_{\rm S_{p}}\le\textstyle{ \sum_{i=1}^d}\big[(R\Sigma_r(X) R^{\top})_{ii}\big]^p.
 \end{equation} 
 Fix any $R\in\mathbb{O}^{d\times r}$ with  $r\le d$. For each $i\in[d]$, an elementary calculation leads to $(R\Sigma_r(X)R^{\top})_{ii}=\sum_{j=1}^r R_{ij}^2\sigma_j(X)$, which implies that
 \[
  \sum_{i=1}^d\big[(R\Sigma_r(X) R^{\top})_{ii}\big]^p=\sum_{i=1}^d\Big(\sum_{j=1}^rR_{ij}^2\sigma_j(X)\Big)^p.
 \]
 For each $i\in[d]$, let $\alpha_i=\sum_{j=1}^rR_{ij}^2$. As $R\in\mathbb{O}^{d\times r}$, we have $\alpha_i\in[0,1]$ for each $i\in[d]$. Note that the function $\mathbb{R}_{+}\ni t\mapsto t^p$
 is concave due to $p\in(0,1]$. Moreover, if $\alpha_i\ne 0$, $\sum_{j=1}^r{R_{ij}^2}/{\alpha_i}=1$. For each $i\in[d]$ with $\alpha_i\ne 0$, from Jensen's inequality, it holds
 \[
  \Big(\sum\limits_{j=1}^r\frac{R_{ij}^2}{\alpha_i}\sigma_j(X)\Big)^p\ge \sum\limits_{j=1}^r\frac{R_{ij}^2}{\alpha_i}(\sigma_j(X))^p,
\]
which by $\alpha_i\!\in\!(0,1]$ and $0\!<\!p\!\leq\! 1$ implies that
$\big(\sum_{j=1}^rR_{ij}^2\sigma_j(X)\big)^p\!\ge\! \sum_{j=1}^rR_{ij}^2(\sigma_j(X))^p$.
Together with $1=\sum_{i=1}^dR_{ij}^2=\sum_{i:\,\alpha_i\ne 0}R_{ij}^2$ for each $j\in[r]$, it follows that
\begin{align*}
 \sum_{i=1}^d\big[(R\Sigma_r(X) R^{\top})_{ii}\big]^p
 &=\sum_{i=1}^d\Big(\sum_{j=1}^rR_{ij}^2\sigma_j(X)\Big)^p
 =\sum_{i:\,\alpha_i\ne 0}\Big(\sum_{j=1}^rR_{ij}^2\sigma_j(X)\Big)^p\nonumber\\
 &\ge\sum_{i:\,\alpha_i\ne 0}\sum_{j=1}^rR_{ij}^2(\sigma_j(X))^p = \sum_{j=1}^r(\sigma_j(X))^p.
 \end{align*}
 This shows that inequality \eqref{temp-spnorm} holds. Now let $X$ have the skinny SVD given by $X=P_1\Sigma_{r}(X) Q_1^{\top}$ with $P_1\in \mathbb{O}^{n\times r}$ and $Q_1\in \mathbb{O}^{m\times r}$. From the definition of $\|X\|^{p}_{\rm S_{p}}$, 
 \begin{equation*}
 \|X\|^{p}_{\rm S_{p}}\!=  \!\sum_{i=1}^r(\sigma_{i}^2(X))^{p/2}\stackrel{\eqref{temp-spnorm}}{\le}\sum_{i=1}^m\big[(Q_1\Sigma_r(X)^2Q_1^{\top})_{ii}\big]^{p/2}\!=\!\sum_{i=1}^m\big[(X^{\top}X)_{ii}\big]^{{p}/{2}}\!=\!\|X\|_{2,p}^p
 \end{equation*}
 where the third equality is due to  $\sum_{i=1}^m\big[(X^{\top}X)_{ii}\big]^{{p}/{2}}=\sum_{j=1}^m\|X_j\|^p=\|X\|_{2,p}^p$.
 \end{proof}

 By invoking Lemma \ref{lemma-Sp}, we obtain the factorization form of the Schatten $p$-norm.
\begin{proposition}\label{Factor-Sp}
 Fix any $p\in(0,{1}/{2}]$. For any $X\in\mathbb{R}^{n\times m}$ with ${\rm rank}(X)\le d$, we have
 \[
  2\|X\|_{\rm S_p}^p =\min_{U\in\mathbb{R}^{n\times d},V\in \mathbb{R}^{m\times d}}\Big\{\|U\|_{2,2p}^{2p}+\|V\|_{2,2p}^{2p}\ \ {\rm s.t.}\ \ X=UV^{\top}\Big\}.
\]
\end{proposition}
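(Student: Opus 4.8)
The plan is to prove the identity through two matching bounds, since the right-hand side is a minimization: first that every feasible factorization $X=UV^{\top}$ obeys $\sum_{i}\|U_i\|^{2p}+\sum_{j}\|V_j\|^{2p}\ge 2\|X\|_{\rm S_p}^{p}$ (writing each column term columnwise, so that $U_i,V_j$ range over the $d$ columns of the factors), and second that this value is attained by an explicit SVD-based factorization. Together these would show the infimum equals $2\|X\|_{\rm S_p}^{p}$ and is a minimum.

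For the lower bound I would fix any $X=UV^{\top}=\sum_{k=1}^{d}U_kV_k^{\top}$ and apply the scalar inequality $a^2+b^2\ge 2ab$ with $a=\|U_k\|^{p}$ and $b=\|V_k\|^{p}$; summing over $k$ gives
\[
\sum_{k=1}^{d}\|U_k\|^{2p}+\sum_{k=1}^{d}\|V_k\|^{2p}\ \ge\ 2\sum_{k=1}^{d}\big(\|U_k\|\,\|V_k\|\big)^{p}.
\]
Since each $U_kV_k^{\top}$ has rank at most one with the single nonzero singular value $\|U_k\|\|V_k\|$, I can rewrite $(\|U_k\|\|V_k\|)^{p}=\|U_kV_k^{\top}\|_{\rm S_p}^{p}$, so the task reduces to $\sum_{k}\|U_kV_k^{\top}\|_{\rm S_p}^{p}\ge\|\sum_{k}U_kV_k^{\top}\|_{\rm S_p}^{p}=\|X\|_{\rm S_p}^{p}$, which is the subadditivity of $\|\cdot\|_{\rm S_p}^{p}$ for $p\in(0,1]$. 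An alternative route that leans on Lemma \ref{lemma-Sp} directly is to combine the Schatten--H\"older bound $\|UV^{\top}\|_{\rm S_p}\le\|U\|_{\rm S_{2p}}\|V\|_{\rm S_{2p}}$ with AM--GM and the column/Schatten comparison $\|U\|_{\rm S_{2p}}^{2p}\le\sum_{i}\|U_i\|^{2p}$ supplied by the $2p$-analogue of Lemma \ref{lemma-Sp}.

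For attainment I would take a thin SVD $X=P\,\Sigma_r(X)\,Q^{\top}$ with $P\in\mathbb{O}^{n\times r}$ and $Q\in\mathbb{O}^{m\times r}$, and set
\[
U=\big[\,P\,\Sigma_r(X)^{1/2}\ \ 0\,\big]\in\mathbb{R}^{n\times d},\qquad
V=\big[\,Q\,\Sigma_r(X)^{1/2}\ \ 0\,\big]\in\mathbb{R}^{m\times d},
\]
padding with $d-r$ zero columns. Then $UV^{\top}=P\,\Sigma_r(X)\,Q^{\top}=X$; the $i$th nonzero column of $U$ (and likewise of $V$) has norm $\sigma_i(X)^{1/2}$, and the zero columns contribute nothing, so $\sum_{i}\|U_i\|^{2p}=\sum_{i=1}^{r}\sigma_i(X)^{p}=\|X\|_{\rm S_p}^{p}=\sum_{j}\|V_j\|^{2p}$. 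This factorization therefore realizes exactly $2\|X\|_{\rm S_p}^{p}$, matching the lower bound.

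I expect the main obstacle to be the lower bound, and specifically the passage from $\sum_{k}\|U_kV_k^{\top}\|_{\rm S_p}^{p}$ to $\|X\|_{\rm S_p}^{p}$: the columnwise AM--GM reduction is routine, but this step genuinely rests on the subadditivity of the Schatten $p$-quasi-norm raised to the $p$th power for $p\in(0,1]$ -- the same concavity/column-domination phenomenon that Lemma \ref{lemma-Sp} is tailored to encode. The attainment step, by contrast, is a direct SVD computation.
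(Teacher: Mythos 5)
Your proposal is correct, and it reaches the identity by a genuinely different route from the paper's. The paper obtains the lower bound by citing the variational characterization of \cite[Theorem 2]{Shang20}, i.e. $2\|X\|_{\rm S_p}^p=\min_{X=UV^{\top}}\big(\|U\|_{\rm S_{2p}}^{2p}+\|V\|_{\rm S_{2p}}^{2p}\big)$, and then uses Lemma~\ref{lemma-Sp} to replace the Schatten $2p$-terms by column sums; its attainment step (thin SVD with $[\Sigma(X)]^{1/2}$ split evenly between the two factors) is identical to yours. Incidentally, you resolved the notational ambiguity the right way: reading $\|U\|_{2,p}^{2p}$ columnwise as $\sum_i\|U_i\|^{2p}$ is what the paper intends, since the literal parsing $\big(\sum_i\|U_i\|^{p}\big)^2$ would prevent the SVD pair from attaining the claimed value. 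Your primary argument --- rank-one expansion $X=\sum_k U_kV_k^{\top}$, columnwise AM--GM, then subadditivity of $\|\cdot\|_{\rm S_p}^p$ --- bypasses the variational theorem entirely, but note one caveat: the subadditivity $\|A+B\|_{\rm S_p}^p\le\|A\|_{\rm S_p}^p+\|B\|_{\rm S_p}^p$ for $p\in(0,1]$ is a classical but genuinely nontrivial theorem (Rotfel'd/Thompson), and it does \emph{not} follow from Lemma~\ref{lemma-Sp}, which compares singular values with diagonal entries of a single congruence $R\Sigma R^{\top}$ rather than with singular values of a sum; so describing it as the same phenomenon the lemma encodes oversells the connection, and you would need to cite that theorem explicitly. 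Your fallback route is the one that stays inside the paper's toolkit: Schatten--H\"older $\|UV^{\top}\|_{\rm S_p}\le\|U\|_{\rm S_{2p}}\|V\|_{\rm S_{2p}}$, AM--GM, and the $2p$-analogue of Lemma~\ref{lemma-Sp} (valid for all $p\in(0,1]$ by the same $X^{\top}\!X$ trick used in the lemma's own proof, not just for $p\le 1/2$) reproduces exactly the inequality the paper imports from \cite{Shang20}, so it amounts to a self-contained proof of that citation. In effect, all three arguments outsource exactly one classical ingredient --- Shang's theorem for the paper, Rotfel'd subadditivity for your main route, Schatten--H\"older for your fallback --- and both of yours are sound.
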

 \begin{proof}
 From \cite[Theorem 2]{Shang20}, it follows that the following relation holds
 \[
 \|X\|_{\rm S_p} =\min_{U\in \mathbb{R}^{n\times d},V\in \mathbb{R}^{m\times d}\atop X=UV^{\top}}\frac{1}{2^{1/p}}\Big(\|U\|^{2p}_{\rm S_{2p}}+\|V\|^{2p}_{\rm S_{2p}}\Big)^{1/p}.
 \]
 Together with $0<2p\leq 1$ and Lemma \ref{lemma-Sp}, we immediately obtain that
 \[
  \|X\|_{\rm S_p} \leq \min_{U\in \mathbb{R}^{n\times d},V\in \mathbb{R}^{m\times d}\atop X=UV^{\top}} \frac{1}{2^{1/p}}\Big(\|U\|^{2p}_{2,2p}+\|V\|^{2p}_{2,2p}\Big)^{1/p}.
 \]
 Let $X$ have the skinny SVD as $X=P_1\Sigma_d(X)Q_1^{\top}$ with
 $P_1\in \mathbb{O}^{n\times d}$ and $Q_1\in \mathbb{O}^{m\times d}$. Taking $\overline{U}=P_1[\Sigma_{d}(X)]^{1/2}$ and $\overline{V}=Q_1[\Sigma_d(X)]^{1/2}$ yields $\|\overline{U}\|^{2p}_{2,2p}=\|\overline{V}\|^{2p}_{2,2p}=\|X\|^p_{\rm S_p}$. Together with the above inequality, we obtain the desired result.
\end{proof}
 \subsection{Kurdyka-{\L}ojasiewicz property}\label{sec2.4}
 Now we recall from \cite{Attouch10} the KL property of an extended real-valued function.
\begin{definition}\label{KL-def}
 For every $\eta>0$, denote $\Upsilon_{\!\eta}$ by the set of continuous concave functions $\varphi\!:[0,\eta)\to\mathbb{R}_{+}$ that are continuously differentiable in $(0,\eta)$ with $\varphi(0)=0$ and $\varphi'(s)>0$ for all $s\in(0,\eta)$. A proper function $h\!:\mathbb{R}^n\to\overline{\mathbb{R}}$ is said to have the KL property at a point $\overline{x}\in{\rm dom}\,\partial h$ if there exist $\delta>0,\eta\in(0,\infty]$ and $\varphi\in\Upsilon_{\!\eta}$ such that 
 \[
  \varphi'(h(x)\!-\!h(\overline{x})){\rm dist}(0,\partial h(x))\ge 1
 \]
 for all $x\in\mathbb{B}(\overline{x},\delta)\cap[h(\overline{x})<h<h(\overline{x})+\eta]$. If $h$ satisfies the KL property at each point of ${\rm dom}\,\partial h$, it is called a KL function.
\end{definition}

 
\begin{remark}\label{KL-remark}
 By Lemma 2.1 of \cite{Attouch10}, a proper lsc function has the KL property at every noncritical point. Thus, to prove that a proper lsc function $h\!:\mathbb{X}\to\overline{\mathbb{R}}$ is a KL function, it suffices to check whether $h$ has the KL property at any critical point.
\end{remark}

\section{A majorized PAM with subspace correction}\label{sec3}

 Fix any $(U',V')\in\mathbb{X}_r$. Since $\nabla\!f$ is Lipschitz continuous with modulus $L_{\!f}$, it holds 
 \begin{align*}
  f(UV^{\top})
  &\le \widehat{F}(U,V,U',V'):=f(U'{V'}^{\top})+\langle\nabla\!f(U'{V'}^{\top}),UV^{\top}\!-\!U'{V'}^{\top}\rangle\\
  &\quad+\frac{L_{\!f}}{2}\|UV^{\top}\!-\!U'{V'}^{\top}\|_F^2\quad{\rm for\ any}\ (U,V)\in\mathbb{X}_r,
 \end{align*}
 which together with the expression of $\Phi_{\lambda,\mu}$ immediately implies that
 \begin{equation*}
  \Phi_{\lambda,\mu}(U,V)
 \le\widehat{\Phi}_{\lambda,\mu}(U,V,U',V')\!:=\!\widehat{F}(U,V,U',V')\!+\!\lambda\big[\vartheta(U)\!+\!\vartheta(V)\big]\!+\frac{\mu}{2}\big(\|U\|_F^2\!+\!\|V\|_F^2\big).
 \end{equation*}
 Note that $\widehat{\Phi}_{\lambda,\mu}(U',V',U',V')\!=\!\Phi_{\lambda,\mu}(U',V')$, so   $\widehat{\Phi}_{\lambda,\mu}(\cdot,\cdot,U',V')$ is a majorization of $\Phi_{\lambda,\mu}$ at $(U',V')$.
 Let $(U^k,V^k)$ be the current iterate. It is natural to develop an algorithm for solving \eqref{prob} by minimizing the function $\widehat{\Phi}_{\lambda,\mu}(\cdot,\cdot,U^k,V^k)$ alternately:
 \begin{subnumcases}{}
 U^{k+1}\in\mathop{\arg\min}_{U\in\mathbb{R}^{n\times r}}\Big\{\langle\nabla_{U}\!F(U^k,V^k),U\rangle\!+\!\lambda\vartheta(U)\!+\!\frac{\mu}{2}\|U\|_F^2+\frac{L_{\!f}}{2}\|(U-U^k)(V^k)^{\top}\|_F^2\Big\} \nonumber,\\
V^{k+1}\!\in\!\mathop{\arg\min}_{V\in\mathbb{R}^{m\times r}}\Big\{\langle\nabla_{V}\!F(U^{k+1},V^k),V\rangle\!+\!\lambda\vartheta(V)\!+\!\frac{\mu}{2}\|V\|_F^2+\frac{L_{\!f}}{2}\|U^{k+1}(V\!-\!V^k)^{\top}\|_F^2\Big\}.\nonumber
 \end{subnumcases}
 Compared with the PALM method, such a majorized AM method is actually a variable metric proximal AM method. Unfortunately, due to the nonsmooth regularizer $\vartheta$, the variable metric subproblems have no closed-form solutions, which brings a great challenge for convergence analysis of the generated iterate sequence. Inspired by the fact that variable metric proximal methods are more effective especially for ill-conditioned problems, we introduce a subspace correction step to per proximal subproblem to guarantee that the variable metric proximal subproblem at the corrected factor has a closed-form solution, and propose a majorized proximal AM algorithm (PAM) with subspace correction. Its iteration steps are described as follows.
 \begin{algorithm}[h]
  \caption{\label{MPAMSC}{\bf (A majorized PAM with subspace correction)}}
  \textbf{Initialization:} Input parameters $\varrho\in(0,1),\underline{\gamma_1}>0,
  \underline{\gamma_2}>0,\gamma_{1,0}>0$ and $\gamma_{2,0}>0$.
  Choose $\overline{P}^0\!\in\mathbb{O}^{m\times r},\widehat{P}^0\!\in\mathbb{O}^{n\times r},\overline{Q}^0=\overline{D}^0=I_{r}$, and let
  $(\overline{U}^0,\overline{V}^0)=(\widehat{P}^0,\overline{P}^0)$. \\
 \textbf{For} $k=0,1,2,\ldots$ \textbf{do}
 \begin{itemize}
  \item[1.] Compute
            \(                  U^{k+1}\in\displaystyle{\mathop{\arg\min}_{U\in\mathbb{R}^{n\times r}}}
                 \Big\{\widehat{\Phi}_{\lambda,\mu}(U,\overline{V}^{k},\overline{U}^k,\overline{V}^k)          +\frac{\gamma_{1,k}}{2}\|U-\overline{U}^k\|_F^2\Big\}.
           \)

   \item[2.] Perform a thin SVD for $U^{k+1}\overline{D}^k$ such that $U^{k+1}\overline{D}^k=\widehat{P}^{k+1}(\widehat{D}^{k+1})^2(\widehat{Q}^{k+1})^{\top}$ with $\widehat{P}^{k+1}\in\mathbb{O}^{n\times r}, \widehat{Q}^{k+1}\in\mathbb{O}^{r}$ and $(\widehat{D}^{k+1})^2={\rm Diag}(\sigma(U^{k+1}\overline{D}^k))$, and set
    \[
      \widehat{U}^{k+1}:=\widehat{P}^{k+1}\widehat{D}^{k+1},\ \
      \widehat{V}^{k+1}\!:=\overline{P}^k\widehat{Q}^{k+1}\widehat{D}^{k+1} \ \ {\rm and}\  \ \widehat{X}^{k+1}\!:=\widehat{U}^{k+1}(\widehat{V}^{k+1})^{\top}.
    \]

 \item[3.] Compute
           \(
             V^{k+1}\in\displaystyle{\mathop{\arg\min}_{V\in \mathbb{R}^{m\times r}}}
             \Big\{\widehat{\Phi}_{\lambda,\mu}(\widehat{U}^{k+1},V,\widehat{U}^{k+1},\widehat{V}^{k+1})
           +\frac{\gamma_{2,k}}{2}\|V-\widehat{V}^{k+1}\|_F^2\Big\}.
          \)

 \item[4.] Find a thin SVD for $V^{k+1}\widehat{D}^{k+1}$ such that               $V^{k+1}\widehat{D}^{k+1}=\overline{P}^{k+1}(\overline{D}^{k+1})^2(\overline{Q}^{k+1})^{\top}$ with $\overline{P}^{k+1}\in\mathbb{O}^{m\times r},\overline{Q}^{k+1}\in\mathbb{O}^{r}$ and $(\overline{D}^{k+1})^2={\rm Diag}(\sigma(V^{k+1}\widehat{D}^{k+1}))$, and set
 \[                 \overline{U}^{k+1}\!:=\widehat{P}^{k+1}\overline{Q}^{k+1}\overline{D}^{k+1},\ \
        \overline{V}^{k+1}\!:=\overline{P}^{k+1}\overline{D}^{k+1}\ \ {\rm and}\  \ \overline{X}^{k+1}\!:=\overline{U}^{k+1}(\overline{V}^{k+1})^{\top}\!.
       \]

 \item[5.] Set $\gamma_{1,k+1}=\max(\underline{\gamma_1},\varrho \gamma_{1,k})$ and             $\gamma_{2,k+1}=\max(\underline{\gamma_2},\varrho \gamma_{2,k})$.
 \end{itemize}
 \textbf{end (For)}
 \end{algorithm}
\begin{remark}\label{remark-MPAMSC}
 {\bf(a)} Steps 2 and 4 are the subspace correction steps. Step 2 is constructing a new factor pair $(\widehat{U}^{k+1},\widehat{V}^{k+1})$ by performing an SVD for $U^{k+1}\overline{D}^k$, whose column space ${\rm col}(\widehat{U}^{k+1})$ can be regarded as a corrected one for that of $U^{k+1}$. This step ensures that the proximal minimization of the majorization $\widehat{\Phi}_{\lambda,\mu}(\cdot,\cdot,\widehat{U}^{k+1},\widehat{V}^{k+1})$ with respect to $V$ has a closed-form solution. Similarly, step 4 is constructing a new factor pair $(\overline{U}^{k+1},\overline{V}^{k+1})$ by performing an SVD for $V^{k+1}\widehat{D}^{k+1}$, whose column space ${\rm col}(\overline{V}^{k+1})$ can be viewed as a corrected one for that of $\widehat{V}^{k+1}$. This step ensures that the proximal minimization of the majorization $\widehat{\Phi}_{\lambda,\mu}(\cdot,\cdot,\overline{U}^{k+1},\overline{V}^{k+1})$ with respect to $U$ has a closed-form solution. As will be shown in Theorem \ref{converge-obj} later, the objective value at  $(\widehat{U}^{k+1}\!,\widehat{V}^{k+1})$ is not more than the one at $(\overline{U}^{k},\overline{V}^k)$, while the objective value at  $(\overline{U}^{k+1},\overline{V}^{k+1})$ is not more than the one at $(\widehat{U}^{k+1},\widehat{V}^{k+1})$. This shows that the subspace correction steps contribute to reducing the objective values. From steps 2 and 4, for each $k\in\mathbb{N}$,
 \begin{subnumcases}{}{}\label{relation1-Uhat}     U^{k+1}\overline{D}^k(\overline{P}^k)^{\top}=U^{k+1}(\overline{V}^k)^{\top}=\widehat{X}^{k+1}=\widehat{U}^{k+1}(\widehat{V}^{k+1})^{\top},\\
  \label{relation1-Ubar}
  \widehat{P}^{k+1}\widehat{D}^{k+1}(V^{k+1})^{\top}=\widehat{U}^{k+1}(V^{k+1})^{\top}=\overline{X}^{k+1}=\overline{U}^{k+1}(\overline{V}^{k+1})^{\top}.
 \end{subnumcases}

 \noindent
 {\bf(b)} In steps 1 and 3, we introduce a proximal term to guarantee the sufficient decrease of the objective value sequence. The uniformly lower bound $\underline{\gamma_1}$ or $\underline{\gamma_2}$ for the proximal parameter is easy to choose, say $10^{-8}$, in practical computation. According to the optimality of $U^{k+1}$ and $V^{k+1}$ and Exercise 8.8 of \cite{RW98}, for each $k$, it holds 
 \begin{align*}
  0\in\big[\nabla\!f(\overline{X}^{k})+L_{\!f}(\widehat{X}^{k+1}\!-\!\overline{X}^{k})\big]\overline{V}^{k}+\mu U^{k+1}+\gamma_{1,k}(U^{k+1}\!-\overline{U}^k)+\lambda \partial\vartheta(U^{k+1});\qquad\\
  0\in\big[\nabla f(\widehat{X}^{k+1})\!+L_{\!f}(\overline{X}^{k+1}\!\!\!-\!\widehat{X}^{k+1})\big]^{\top}\widehat{U}^{k+1}\!+\!\mu {V}^{k+1}\!+\!\gamma_{2,k}({V}^{k+1}\!-\!\widehat{V}^{k+1})\!+\!\lambda\partial \vartheta(V^{k+1}).
 \end{align*}

 \noindent
 {\bf (c)} From step 1, equations \eqref{relation1-Uhat}-\eqref{relation1-Ubar} and the expression of $\widehat{\Phi}_{\lambda,\mu}(\cdot,\overline{V}^k,\overline{U}^k,\overline{V}^k\!)$, we get
 \[
   U^{k+1}\in\mathop{\arg\min}_{U\in\mathbb{R}^{n\times r}}
   \Big\{\frac{1}{2}\big\|G^k-U\Lambda^k\big\|_F^2+\lambda\sum_{i=1}^r\theta(\|U_i\|)\Big\}
 \]
 where $G^k\!:=\!\big(L_{\!f}\overline{Z}^k\overline{P}^k\!+\!\gamma_{1,k}\widehat{P}^k\overline{Q}^k\big)\overline{D}^k(\Lambda^k)^{-1}$
 with $\overline{Z}^k=\overline{X}^k\!-\!L_{\!f}^{-1}\nabla\!f(\overline{X}^k)$ and $\Lambda^k=\!\big[L_{\!f}(\overline{D}^k)^2\!+(\mu+\gamma_{1,k})I_r\big]^{1/2}$ for each $k$. Invoking Lemma \ref{lemma-proxmap} leads to 
 \begin{equation}\label{Uk-def}
  U_i^{k+1}\in\left\{\!\begin{array}{cl}
  \frac{G_i^k}{\|G_i^k\|}\mathcal{P}_{\!{\lambda}/{(\Lambda_{ii}^k)^2}}\theta\Big(\frac{\|G^k_i\|}{\Lambda_{ii}^k} \Big)& {\rm if}\ \|G_i^k\|>0,\\
  \{0\} &{\rm if}\ \|G_i^k\|=0
  \end{array}\right.\quad{\rm for\ each}\ i\in[r].
 \end{equation}
 While from step 3, equations \eqref{relation1-Uhat}-\eqref{relation1-Ubar} and the expression of $\widehat{\Phi}_{\lambda,\mu}(\widehat{U}^{k\!+\!1},\cdot,\widehat{U}^{k\!+\!1},\widehat{V}^{k\!+\!1})$,
 \[
   V^{k+1}\in\mathop{\arg\min}_{V\in\mathbb{R}^{m\times r}}
   \Big\{\frac{1}{2}\big\|H^{k+1}\!-\!V\Delta^{k+1}\big\|_F^2\!+\!\lambda\sum_{i=1}^r\theta(\|V_i\|)\Big\},
  \]
  where, for each $k\in\mathbb{N}$, $H^{k+1}=\big(L_{\!f}(\widehat{Z}^{k+1})^{\top}\widehat{P}^{k+1}+\gamma_{2,k}\overline{P}^k\widehat{Q}^{k+1}\big)
  \widehat{D}^{k+1}(\Delta^{k+1})^{-1}$ with
  $\widehat{Z}^{k+1}=\widehat{X}^{k+1}-L_{\!f}^{-1}\nabla\!f(\widehat{X}^{k+1})$ and $\Delta^{k+1}=\big[L_{\!f}(\widehat{D}^{k+1})^2\!+\!(\mu\!+\!\gamma_{2,k})I_r\big]^{1/2}$. By Lemma \ref{lemma-proxmap}, 
\begin{equation}\label{Vk-def}
   V_i^{k+1}\in\left\{\!\begin{array}{cl}
  \frac{H_i^{k+1}}{\|H_i^{k+1}\|}\mathcal{P}_{\!{\lambda}/{(\Delta_{ii}^{k+1})^2}}\theta\Big(\frac{\|H_i^{k+1}\|}{\Delta_{ii}^{k+1}}\Big)& {\rm if}\ \|H_i^{k+1}\|>0,\\
   \{0\} &{\rm if}\ \|H_i^{k+1}\|=0
 \end{array}\right.\quad{\rm for\ each}\ i\in[r].
 \end{equation}
 Recall that $\theta$ is assumed to have a closed-form proximal mapping, so the main cost of Algorithm \ref{MPAMSC} in each step is to perform an SVD for $U^{k+1}\overline{D}^k$ and $V^{k+1}\widehat{D}^{k+1}$, whose computation cost is cheap because $r$ is usually chosen to be far less than $\min\{m,n\}$.
\end{remark}

 From Remark \ref{remark-MPAMSC}, we have that Algorithm \ref{MPAMSC} is well defined. For its iterate sequences
 $\{(U^k,V^k)\}_{k\in\mathbb{N}}$, $\{(\widehat{U}^k,\widehat{V}^k)\}_{k\in\mathbb{N}}$ and
 $\{(\overline{U}^k,\overline{V}^k)\}_{k\in\mathbb{N}}$, the following propositions establish the relation among their column spaces and nonzero column indices. Since the proof of Proposition \ref{prop-nzind} is similar to that of \cite[Proposition 4.2 (iii)]{TaoQianPan22}, we here omit it.
 \begin{proposition}\label{prop-colspace}
  Let $\big\{(U^k,V^k,\widehat{U}^{k},\widehat{V}^{k},\overline{U}^{k},\overline{V}^{k},\widehat{X}^{k},\overline{X}^{k})\big\}_{k\in\mathbb{N}}$
  be the sequence generated by Algorithm \ref{MPAMSC}. Then, for every $k\in \mathbb{N}$, the following inclusions hold
  \begin{equation*}
  {\rm col}(\overline{U}^{k+1})\subset{\rm col}(\widehat{U}^{k+1})\subset{\rm col}(U^{k+1})\ \ {\rm and}\ \
  {\rm col}(\widehat{V}^{k+2})\subset{\rm col}(\overline{V}^{k+1})\subset{\rm col}(V^{k+1}).
 \end{equation*}
 \end{proposition}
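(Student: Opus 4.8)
The plan is to prove the four inclusions one at a time, grouping them into two \emph{outer} inclusions, namely ${\rm col}(\widehat{U}^{k+1})\subset{\rm col}(U^{k+1})$ and ${\rm col}(\overline{V}^{k+1})\subset{\rm col}(V^{k+1})$, which follow from elementary column-space bookkeeping, and two \emph{inner} inclusions, namely ${\rm col}(\overline{U}^{k+1})\subset{\rm col}(\widehat{U}^{k+1})$ and ${\rm col}(\widehat{V}^{k+2})\subset{\rm col}(\overline{V}^{k+1})$, which need a finer kernel/singular-vector argument. The one elementary identity I will use repeatedly is that for a matrix $P$ and a nonnegative diagonal $D={\rm Diag}(d_1,\ldots,d_r)$ one has ${\rm col}(PD)={\rm col}(PD^2)={\rm span}\{P_i : d_i>0\}$, together with the facts that right multiplication by an invertible matrix preserves the column space while right multiplication by an arbitrary matrix can only shrink it.

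For the outer inclusions I start from the thin SVD of Step 2, $U^{k+1}\overline{D}^k=\widehat{P}^{k+1}(\widehat{D}^{k+1})^2(\widehat{Q}^{k+1})^{\top}$, and right-multiply by the orthogonal $\widehat{Q}^{k+1}$ to obtain $\widehat{P}^{k+1}(\widehat{D}^{k+1})^2=U^{k+1}\overline{D}^k\widehat{Q}^{k+1}$. Then the diagonal identity gives ${\rm col}(\widehat{U}^{k+1})={\rm col}(\widehat{P}^{k+1}\widehat{D}^{k+1})={\rm col}(\widehat{P}^{k+1}(\widehat{D}^{k+1})^2)={\rm col}(U^{k+1}\overline{D}^k\widehat{Q}^{k+1})={\rm col}(U^{k+1}\overline{D}^k)\subseteq{\rm col}(U^{k+1})$. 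The inclusion ${\rm col}(\overline{V}^{k+1})\subseteq{\rm col}(V^{k+1})$ is entirely analogous, starting instead from the SVD of Step 4.

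For the inner inclusions, which I regard as the heart of the argument, consider ${\rm col}(\overline{U}^{k+1})\subseteq{\rm col}(\widehat{U}^{k+1})$. Writing $\overline{U}^{k+1}=\widehat{P}^{k+1}(\overline{Q}^{k+1}\overline{D}^{k+1})$, its column space sits inside ${\rm col}(\widehat{P}^{k+1})$, but ${\rm col}(\widehat{U}^{k+1})={\rm span}\{\widehat{P}^{k+1}_j : (\widehat{D}^{k+1})_{jj}>0\}$ may be strictly smaller when $U^{k+1}\overline{D}^k$ is rank-deficient. The crux is therefore to show that $\overline{U}^{k+1}$ has no component along any $\widehat{P}^{k+1}_j$ with $(\widehat{D}^{k+1})_{jj}=0$, i.e. that the $j$th row of $\overline{Q}^{k+1}\overline{D}^{k+1}$ vanishes for every such $j$. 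To prove this I exploit Step 4's SVD $V^{k+1}\widehat{D}^{k+1}=\overline{P}^{k+1}(\overline{D}^{k+1})^2(\overline{Q}^{k+1})^{\top}$: whenever $(\widehat{D}^{k+1})_{jj}=0$, the $j$th column of $V^{k+1}\widehat{D}^{k+1}$ is zero, so the standard basis vector $\mathbf{e}_j$ lies in $\ker(V^{k+1}\widehat{D}^{k+1})={\rm span}\{\overline{Q}^{k+1}_i : (\overline{D}^{k+1})_{ii}=0\}$. Since $\overline{Q}^{k+1}\in\mathbb{O}^r$ is a full orthogonal matrix, $\mathbf{e}_j$ is then orthogonal to every $\overline{Q}^{k+1}_i$ with $(\overline{D}^{k+1})_{ii}>0$, giving $(\overline{Q}^{k+1})_{ji}=0$ for all such $i$; combined with the trivial vanishing of $(\overline{Q}^{k+1})_{ji}(\overline{D}^{k+1})_{ii}$ when $(\overline{D}^{k+1})_{ii}=0$, the entire $j$th row of $\overline{Q}^{k+1}\overline{D}^{k+1}$ is zero, as required. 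The remaining inner inclusion ${\rm col}(\widehat{V}^{k+2})\subseteq{\rm col}(\overline{V}^{k+1})$ is handled symmetrically, reading off the zero columns of $U^{k+2}\overline{D}^{k+1}$ at the indices $j$ with $(\overline{D}^{k+1})_{jj}=0$ and using the SVD of Step 2 at iteration $k+1$ to conclude $(\widehat{Q}^{k+2})_{ji}=0$ on the nonzero singular directions.

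I expect the kernel/orthogonality step in the inner inclusions to be the only delicate point: it is exactly where the possible rank-deficiency of the corrective SVDs matters, and where one must pass from ``a standard basis vector lies in the kernel'' to ``the corresponding rows of the rotation factor vanish on the nonzero singular directions.'' The outer inclusions and the diagonal column-space identity are routine and can be disposed of quickly.
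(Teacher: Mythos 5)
Your proof is correct, but it takes a genuinely different route from the paper's for the two ``inner'' inclusions. The paper leans entirely on the crossed factorization identities \eqref{relation1-Uhat}--\eqref{relation1-Ubar}: since $\overline{X}^{k+1}=\widehat{U}^{k+1}(V^{k+1})^{\top}$ and, from step 4, $\overline{X}^{k+1}=(\widehat{P}^{k+1}\overline{Q}^{k+1})(\overline{D}^{k+1})^2(\overline{P}^{k+1})^{\top}$, one gets ${\rm col}(\overline{U}^{k+1})={\rm col}(\overline{X}^{k+1})\subset{\rm col}(\widehat{U}^{k+1})$ in one line, and similarly ${\rm col}(\widehat{V}^{k+2})={\rm row}(\widehat{X}^{k+2})\subset{\rm col}(\overline{V}^{k+1})$ from $\widehat{X}^{k+2}=U^{k+2}(\overline{V}^{k+1})^{\top}$; the two ``outer'' inclusions come out of the same identities, so all four are handled uniformly through the products $\widehat{X}^{k+1},\overline{X}^{k+1}$ and the observation that the corrected factors share their column/row spaces. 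You instead never invoke \eqref{relation1-Uhat}--\eqref{relation1-Ubar}, and prove the inner inclusions by an entrywise kernel argument: when $(\widehat{D}^{k+1})_{jj}=0$ the $j$th column of $V^{k+1}\widehat{D}^{k+1}$ vanishes, so $\mathbf{e}_j$ lies in $\ker(V^{k+1}\widehat{D}^{k+1})={\rm span}\{\overline{Q}^{k+1}_i:(\overline{D}^{k+1})_{ii}=0\}$, whence the $j$th row of $\overline{Q}^{k+1}\overline{D}^{k+1}$ is zero; this is a valid (I checked each step, including the characterization of the kernel via orthonormality of $\overline{P}^{k+1}$'s columns and of $\overline{Q}^{k+1}$) but more granular substitute for the paper's one-line use of $\overline{X}^{k+1}=\widehat{U}^{k+1}(V^{k+1})^{\top}$. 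What each approach buys: the paper's is shorter and symmetric across all four inclusions, at the cost of having to record \eqref{relation1-Uhat}--\eqref{relation1-Ubar} beforehand (which it needs anyway elsewhere); yours is self-contained at the level of the SVD factors and makes completely explicit why possible rank-deficiency of $U^{k+1}\overline{D}^k$ and $V^{k+1}\widehat{D}^{k+1}$ --- which is exactly the case the algorithm is designed to produce when columns are thresholded to zero --- cannot break the inclusions.
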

 \begin{proof}
 From equation \eqref{relation1-Uhat}, ${\rm col}(\widehat{X}^{k+1})\subset{\rm col}({U}^{k+1})$ and ${\rm row}(\widehat{X}^{k+1})\subset{\rm col}(\overline{V}^{k})$. While from step 2 of Algorithm \ref{MPAMSC}, $\widehat{X}^{k+1}=\widehat{P}^{k+1}(\widehat{D}^{k+1})^2(\overline{P}^{k}\!\widehat{Q}^{k+1})^{\top}$, by which it is easy to check that ${\rm col}(\widehat{U}^{k+1})={\rm col}(\widehat{X}^{k+1})$ and ${\rm col}(\widehat{V}^{k+1})={\rm row}(\widehat{X}^{k+1})$. Then,
 \begin{equation}\label{colUk1}
  {\rm col}(\widehat{U}^{k+1})\subset{\rm col}(U^{k+1})\ \ {\rm and}\ \  {\rm col}(\widehat{V}^{k+1})\subset{\rm col}(\overline{V}^{k}).
 \end{equation}
 Similarly, from equation \eqref{relation1-Ubar}, ${\rm col}(\overline{X}^{k+1})\subset{\rm col}(\widehat{U}^{k+1})$ and ${\rm row}(\overline{X}^{k+1})\subset {\rm col}({V}^{k+1})$. From step 4 of Algorithm \ref{MPAMSC}, $\overline{X}^{k+1}=\widehat{P}^{k+1}\overline{Q}^{k+1}(\overline{D}^{k+1})^2(\overline{P}^{k+1})^{\top}$. Then, it holds that
 \begin{equation}\label{colUk2}
  {\rm col}(\overline{U}^{k+1})={\rm col}(\overline{X}^{k+1})\!\subset{\rm col}(\widehat{U}^{k+1}) \ \ {\rm and}\ \  {\rm col}(\overline{V}^{k+1})=\!{\rm row}(\overline{X}^{k+1})\subset{\rm col}({V}^{k+1}).
 \end{equation}
  From the above equations \eqref{colUk1}-\eqref{colUk2}, we immediately obtain the desired result.
 \end{proof}
 \begin{proposition}\label{prop-nzind}
  Let $\big\{(U^k\!,V^k\!,\widehat{U}^{k}\!,\widehat{V}^{k}\!,\overline{U}^{k}\!,\overline{V}^{k}\!,\widehat{X}^{k}\!,\overline{X}^{k})\big\}_{k\in\mathbb{N}}$
  be the sequence generated by Algorithm \ref{MPAMSC}. Then, there exists $\overline{k}\in\mathbb{N}$ such that for all $k\ge\overline{k}$,
  \begin{align}\label{index}
  J_{V^{k}}=J_{U^{k}}=J_{\widehat{U}^{k}}=J_{\widehat{V}^{k}}  =J_{\overline{V}^{k}}\!=\!J_{\overline{U}^{k}}\!=\!J_{\overline{U}^{k+1}},\qquad\qquad\\
  \label{rank-equal}
  \!\!{\rm rank}(\overline{X}^{k})={\rm rank}(\widehat{X}^{k})\!=\!
 {\rm rank}(\widehat{U}^{k})\!=\!{\rm rank}(\widehat{V}^{k})
 \!=\!{\rm rank}(\overline{U}^{k})\!=\!{\rm rank}(\overline{V}^{k})=\!\|\overline{U}^k\|_{2,0},
 \end{align}
 and hence ${\rm col}(\overline{U}^{k+1})\!=\!{\rm col}(\widehat{U}^{k+1})\!=\!{\rm col}(U^{k+1})$ and ${\rm col}(\widehat{V}^{k+1})\!=\!{\rm col}(\overline{V}^{k+1})\!=\!{\rm col}(V^{k+1})$.
 \end{proposition}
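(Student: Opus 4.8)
The plan is to track, for every factor, its number of nonzero columns and to exploit the ordered-SVD structure of the two correction steps. First I would record that Step 2 writes $\widehat{U}^{k+1}=\widehat{P}^{k+1}\widehat{D}^{k+1}$ and $\widehat{V}^{k+1}=\overline{P}^k\widehat{Q}^{k+1}\widehat{D}^{k+1}$, where $\widehat{P}^{k+1}$ and $\overline{P}^k\widehat{Q}^{k+1}$ have orthonormal columns; hence the $i$th column of either factor is nonzero exactly when $\widehat{D}^{k+1}_{ii}\ne0$, and since $(\widehat{D}^{k+1})^2={\rm Diag}(\sigma(U^{k+1}\overline{D}^k))$ lists the singular values decreasingly, this forces $J_{\widehat{U}^{k+1}}=J_{\widehat{V}^{k+1}}=\{1,\ldots,\widehat{s}_{k+1}\}$ with $\widehat{s}_{k+1}:={\rm rank}(U^{k+1}\overline{D}^k)$. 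The same reading of Step 4 gives $J_{\overline{U}^{k+1}}=J_{\overline{V}^{k+1}}=\{1,\ldots,\overline{s}_{k+1}\}$ with $\overline{s}_{k+1}:={\rm rank}(V^{k+1}\widehat{D}^{k+1})$. Thus all ``hat'' and ``bar'' index sets are initial segments of $[r]$, so equality of cardinalities will upgrade to equality of sets.

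Next I would establish the monotonicity chain $\overline{s}_{k+1}\le\widehat{s}_{k+1}\le\overline{s}_k$. The left bound is $\overline{s}_{k+1}={\rm rank}(V^{k+1}\widehat{D}^{k+1})\le{\rm rank}(\widehat{D}^{k+1})=\widehat{s}_{k+1}$, the right bound is $\widehat{s}_{k+1}={\rm rank}(U^{k+1}\overline{D}^k)\le{\rm rank}(\overline{D}^k)=\overline{s}_k$, both from ${\rm rank}(AB)\le\min\{{\rm rank}(A),{\rm rank}(B)\}$ and the fact that $\overline{D}^k$ has exactly $\overline{s}_k=|J_{\overline{U}^k}|$ nonzero diagonal entries. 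Hence the interlaced integer sequence $\overline{s}_0\ge\widehat{s}_1\ge\overline{s}_1\ge\widehat{s}_2\ge\cdots$ is nonincreasing and bounded below by $0$, so it is eventually constant: there are $\overline{k}$ and $s^*$ with $\widehat{s}_k=\overline{s}_k=s^*$ for all $k\ge\overline{k}$. For such $k$ every hat and bar index set equals $\{1,\ldots,s^*\}$, which already yields the $\widehat{U},\widehat{V},\overline{U},\overline{V},\overline{U}^{k+1}$ equalities in \eqref{index} and, via $\widehat{X}^{k}=\widehat{P}^{k}(\widehat{D}^{k})^2(\overline{P}^{k-1}\widehat{Q}^{k})^{\top}$, $\overline{X}^{k}=\widehat{P}^{k}\overline{Q}^{k}(\overline{D}^{k})^2(\overline{P}^{k})^{\top}$ and $\|\overline{U}^k\|_{2,0}=|J_{\overline{U}^k}|$, the rank identities in \eqref{rank-equal}.

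The remaining and, I expect, most delicate point is to fold $J_{U^k}$ and $J_{V^k}$ into the picture, since these sets are not a priori initial segments. Here I would invoke the closed forms \eqref{Uk-def}-\eqref{Vk-def}: because $G^k=(L_{\!f}\overline{Z}^k\overline{P}^k+\gamma_{1,k}\widehat{P}^k\overline{Q}^k)\overline{D}^k(\Lambda^k)^{-1}$ is right-multiplied by the diagonal $\overline{D}^k(\Lambda^k)^{-1}$, the column $G_i^k$ vanishes whenever $\overline{D}^k_{ii}=0$, so \eqref{Uk-def} gives $U_i^{k+1}=0$ there and hence $J_{U^{k+1}}\subseteq J_{\overline{U}^k}$; symmetrically the $\widehat{D}^{k+1}$-scaling in $H^{k+1}$ yields $J_{V^{k+1}}\subseteq J_{\widehat{U}^{k+1}}$. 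Combining each inclusion with a rank count closes the gap: for $k\ge\overline{k}$, $s^*=\widehat{s}_{k+1}={\rm rank}(U^{k+1}\overline{D}^k)\le|J_{U^{k+1}}\cap J_{\overline{U}^k}|=|J_{U^{k+1}}|\le|J_{\overline{U}^k}|=s^*$, forcing $J_{U^{k+1}}=\{1,\ldots,s^*\}$, and likewise $s^*=\overline{s}_{k+1}\le|J_{V^{k+1}}|\le|J_{\widehat{U}^{k+1}}|=\widehat{s}_{k+1}=s^*$ forces $J_{V^{k+1}}=\{1,\ldots,s^*\}$. This completes \eqref{index}.

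Finally I would promote the inclusions of Proposition \ref{prop-colspace} to equalities by matching dimensions. For $k\ge\overline{k}$ we have ${\rm col}(\overline{U}^{k+1})\subset{\rm col}(\widehat{U}^{k+1})\subset{\rm col}(U^{k+1})$ with ${\rm rank}(\overline{U}^{k+1})=\overline{s}_{k+1}=s^*$ and ${\rm rank}(\widehat{U}^{k+1})=\widehat{s}_{k+1}=s^*$, while $s^*={\rm rank}(\widehat{U}^{k+1})\le{\rm rank}(U^{k+1})\le|J_{U^{k+1}}|=s^*$; equal dimensions then collapse the three subspaces into one, and the mirror count on the $V$-side gives ${\rm col}(\widehat{V}^{k+2})={\rm col}(\overline{V}^{k+1})={\rm col}(V^{k+1})$, as claimed.
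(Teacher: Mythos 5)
Your proof is correct and takes essentially the approach the paper intends (the paper itself omits this proof, deferring to \cite[Proposition 4.2 (iii)]{TaoQianPan22}): the SVD structure of steps 2 and 4 makes the corrected index sets initial segments of $[r]$, the interlaced integer sequence $\overline{s}_0\ge\widehat{s}_1\ge\overline{s}_1\ge\cdots$ is eventually constant, the closed forms \eqref{Uk-def}--\eqref{Vk-def} give the inclusions $J_{U^{k+1}}\subseteq J_{\overline{U}^k}$ and $J_{V^{k+1}}\subseteq J_{\widehat{U}^{k+1}}$, and a dimension count upgrades the inclusions of Proposition \ref{prop-colspace} to equalities. The only cosmetic point is that your argument pins down $J_{U^{k}}$ and $J_{V^{k}}$ only from index $\overline{k}+1$ onward, so $\overline{k}$ should be relabeled as $\overline{k}+1$ for \eqref{index} to hold verbatim at every $k\ge\overline{k}$.
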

\section{Convergence analysis of Algorithm \ref{MPAMSC}}\label{sec4}

 To proceed the convergence analysis of Algorithm \ref{MPAMSC}, we need the following assumption. 
\begin{assumption}\label{ass1}
 For any given $X\!\in\mathbb{R}^{n\times m}$ of ${\rm rank}(X)\!\le\!\kappa$ and $(P,Q)\!\in\!\mathbb{O}^{n,m}(X)$, the factor pair $(\overline{U},\overline{V})=\!(P_1[\Sigma_{\kappa}(X)]^{\frac{1}{2}},Q_1[\Sigma_{\kappa}(X)]^{\frac{1}{2}})$ satisfies
 \begin{equation*}
  \vartheta(\overline{U})+\vartheta(\overline{V})
  =\inf_{(U,V)\in\mathbb{R}^{n\times\kappa}\times\mathbb{R}^{m\times \kappa}}\Big\{\vartheta(U)+\vartheta(V)\ \ {\rm s.t.}\ \ X=UV^{\top}\Big\},
 \end{equation*}
 where $P_1$ and $Q_1$ are the submatrix consisting of the first $\kappa$ columns of $P$ and $Q$.
 \end{assumption}

 Assumption \ref{ass1} is rather mild, and it can be satisfied by the function $\vartheta$ associated with some common $\theta$ to promote sparsity. For example, when $\theta=\theta_1$, Assumption \ref{ass1} holds by \cite{BiTaoPan22}; when $\theta=\theta_2$, it holds due to \cite[Lemma 1]{Srebro05}; and when $\theta=\theta_3$-$\theta_4$, it holds by Proposition \ref{Factor-Sp}. Under Assumption \ref{ass1}, by following the similar proof to that of \cite[Proposition 4.2 (i)]{TaoQianPan22}, we can establish the convergence of the objective value sequences.
\begin{theorem}\label{converge-obj}
 Let $\big\{(U^k,V^k,\widehat{U}^{k},\widehat{V}^{k},\overline{U}^{k},\overline{V}^{k},\widehat{X}^{k},\overline{X}^{k})\big\}_{k\in\mathbb{N}}$ be the sequence generated by Algorithm \ref{MPAMSC}, and write $\underline{\gamma}:=\min\{\underline{\gamma_1},\underline{\gamma_2}\}$. Then, under Assumption \ref{ass1}, for each $k\in\mathbb{N}$,
 \begin{align}\label{descrease-ineq0}
 \Phi_{\lambda,\mu}(\overline{U}^{k},\overline{V}^{k})
 &\ge {\Phi}_{\lambda,\mu}(\widehat{U}^{k+1},\widehat{V}^{k+1})
     +(\underline{\gamma}/2)\|U^{k+1}-\overline{U}^k\|_F^2\\
 \label{descrease-ineq1}
 &\ge\Phi_{\lambda,\mu}(\overline{U}^{k+1},\overline{V}^{k+1})
    +\frac{\underline{\gamma}}{2}\big(\|U^{k+1}\!-\overline{U}^k\|_F^2
    +\|V^{k+1}\!-\widehat{V}^{k+1}\|_F^2\big),
 \end{align}
 so $\{\Phi_{\lambda,\mu}(\overline{U}^k,\overline{V}^k)\}_{k\in\mathbb{N}}$ and $\{\Phi_{\lambda,\mu}(\widehat{U}^k,\widehat{V}^k)\}_{k\in\mathbb{N}}$ converge to the same point, say, $\varpi^*$.
\end{theorem}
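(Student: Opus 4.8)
The plan is to establish the two displayed inequalities by a majorization--minimization argument and then to deduce convergence from monotonicity and boundedness below. Three ingredients are used repeatedly: the majorization property that $\widehat{\Phi}_{\lambda,\mu}(\cdot,\cdot,U',V')\ge\Phi_{\lambda,\mu}$ with equality at $(U',V')$ (from \eqref{fmajor-ineq}--\eqref{ineq-Phi}); the defining optimality of $U^{k+1}$ and $V^{k+1}$ in steps 1 and 3; and the fact that the SVD correction steps 2 and 4 produce \emph{balanced} factorizations of $\widehat{X}^{k+1}$ and $\overline{X}^{k+1}$.

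For the first inequality \eqref{descrease-ineq0}, I start at $(\overline{U}^k,\overline{V}^k)$. Comparing the step-1 minimizer $U^{k+1}$ against the feasible point $\overline{U}^k$ and invoking $\widehat{\Phi}_{\lambda,\mu}(\overline{U}^k,\overline{V}^k,\overline{U}^k,\overline{V}^k)=\Phi_{\lambda,\mu}(\overline{U}^k,\overline{V}^k)$, I obtain
\[
 \widehat{\Phi}_{\lambda,\mu}(U^{k+1},\overline{V}^k,\overline{U}^k,\overline{V}^k)+\tfrac{\gamma_{1,k}}{2}\|U^{k+1}-\overline{U}^k\|_F^2\le\Phi_{\lambda,\mu}(\overline{U}^k,\overline{V}^k).
\]
I then lower-bound the left-hand majorization value by $\Phi_{\lambda,\mu}(\widehat{U}^{k+1},\widehat{V}^{k+1})$. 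Using \eqref{relation1-Uhat}, the smooth part $\widehat{F}(U^{k+1},\overline{V}^k,\overline{U}^k,\overline{V}^k)$ evaluates $f$ along $\widehat{X}^{k+1}=U^{k+1}(\overline{V}^k)^{\top}$, so by \eqref{fmajor-ineq} it is at least $f(\widehat{X}^{k+1})$. For the regularizers, the key observation is that step 2 makes $(\widehat{U}^{k+1},\widehat{V}^{k+1})$ exactly the balanced factorization $(P_1[\Sigma_r(\widehat{X}^{k+1})]^{1/2},Q_1[\Sigma_r(\widehat{X}^{k+1})]^{1/2})$ of $\widehat{X}^{k+1}$; hence $\|\widehat{U}^{k+1}\|_F^2=\|\widehat{V}^{k+1}\|_F^2=\|\widehat{X}^{k+1}\|_*$, which by the classical identity $\min_{UV^{\top}=X}\tfrac12(\|U\|_F^2+\|V\|_F^2)=\|X\|_*$ yields $\tfrac{\mu}{2}(\|U^{k+1}\|_F^2+\|\overline{V}^k\|_F^2)\ge\tfrac{\mu}{2}(\|\widehat{U}^{k+1}\|_F^2+\|\widehat{V}^{k+1}\|_F^2)$, while Assumption \ref{ass1} applied to the same balanced factorization gives $\vartheta(U^{k+1})+\vartheta(\overline{V}^k)\ge\vartheta(\widehat{U}^{k+1})+\vartheta(\widehat{V}^{k+1})$. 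Combining the three bounds gives $\widehat{\Phi}_{\lambda,\mu}(U^{k+1},\overline{V}^k,\overline{U}^k,\overline{V}^k)\ge\Phi_{\lambda,\mu}(\widehat{U}^{k+1},\widehat{V}^{k+1})$, and since $\gamma_{1,k}\ge\underline{\gamma_1}\ge\underline{\gamma}$, \eqref{descrease-ineq0} follows. The second inequality in \eqref{descrease-ineq1} is obtained by the symmetric argument starting from $(\widehat{U}^{k+1},\widehat{V}^{k+1})$: optimality of $V^{k+1}$ in step 3, the relation $\widehat{U}^{k+1}(V^{k+1})^{\top}=\overline{X}^{k+1}$ from \eqref{relation1-Ubar}, and the fact that step 4 makes $(\overline{U}^{k+1},\overline{V}^{k+1})$ the balanced factorization of $\overline{X}^{k+1}$, again combined with \eqref{fmajor-ineq}, Assumption \ref{ass1} and the nuclear-norm identity. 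Adding the two inequalities produces \eqref{descrease-ineq1}.

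Finally, \eqref{descrease-ineq1} shows $\{\Phi_{\lambda,\mu}(\overline{U}^k,\overline{V}^k)\}_{k\in\mathbb{N}}$ is nonincreasing; since $f$ is lower bounded, $\vartheta\ge0$ by (C.1), and the $\mu$-term is nonnegative, $\Phi_{\lambda,\mu}$ is bounded below, so this sequence converges to some $\varpi^*$. The chain $\Phi_{\lambda,\mu}(\overline{U}^k,\overline{V}^k)\ge\Phi_{\lambda,\mu}(\widehat{U}^{k+1},\widehat{V}^{k+1})\ge\Phi_{\lambda,\mu}(\overline{U}^{k+1},\overline{V}^{k+1})$ then squeezes $\{\Phi_{\lambda,\mu}(\widehat{U}^k,\widehat{V}^k)\}_{k\in\mathbb{N}}$ to the same limit $\varpi^*$.

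I expect the main obstacle to be the regularizer bound, namely showing that passing from the uncorrected pair to the corrected factors does not increase $\lambda[\vartheta(\cdot)+\vartheta(\cdot)]+\tfrac{\mu}{2}(\|\cdot\|_F^2+\|\cdot\|_F^2)$. This is precisely where Assumption \ref{ass1} (for $\vartheta$) and the nuclear-norm factorization identity (for the $\mu$-term) are indispensable, and one must verify that the single balanced factorization produced by the SVD in step 2 (resp. step 4) simultaneously attains both infima over all factorizations of $\widehat{X}^{k+1}$ (resp. $\overline{X}^{k+1}$).
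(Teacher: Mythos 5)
Your proposal is correct and follows essentially the same route the paper intends: the paper omits the proof, deferring to the analogous argument in \cite[Proposition 4.2(i)]{TaoQianPan22}, which is precisely your scheme of (a) comparing the step-1/step-3 minimizers against the feasible points $\overline{U}^k$ and $\widehat{V}^{k+1}$ in the majorization $\widehat{\Phi}_{\lambda,\mu}$, and (b) using Assumption \ref{ass1} together with the nuclear-norm identity $\min_{UV^{\top}=X}\frac12(\|U\|_F^2+\|V\|_F^2)=\|X\|_*$ to show that the balanced SVD factor pairs produced in steps 2 and 4 do not increase the regularization terms. Your closing observation, that the single balanced factorization simultaneously attains both infima (the $\vartheta$-infimum by Assumption \ref{ass1} and the Frobenius one since $\|\widehat{U}^{k+1}\|_F^2=\|\widehat{V}^{k+1}\|_F^2=\|\widehat{X}^{k+1}\|_*$), is exactly the point that makes the argument work.
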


 As a direct consequence of Theorem \ref{converge-obj}, 
 we obtain the following corollary.
 \begin{corollary}\label{corollary4.1}
  Let $\big\{(U^k\!,V^k\!,\widehat{U}^{k}\!,\widehat{V}^{k}\!,\overline{U}^{k}\!,\overline{V}^{k}\!,\widehat{X}^{k}\!,\overline{X}^{k})\big\}_{k\in\mathbb{N}}$ be the sequence generated by Algorithm \ref{MPAMSC}. Then, under Assumption \ref{ass1}, the following assertions are true.
  \begin{description}
  \item[(i)] $\lim_{k\to\infty}\|U^{k+1}-\overline{U}^k\|_F=0$ and $\lim_{k\to\infty}\|V^{k+1}-\widehat{V}^{k+1}\|_F=0$;

  \item[(ii)] the sequence $\big\{(U^k,V^k,\widehat{U}^{k},\widehat{V}^{k},\overline{U}^{k},\overline{V}^{k},\widehat{X}^{k},\overline{X}^{k})\big\}_{k\in\mathbb{N}}$ is bounded;

  \item[(iii)] by letting $\overline{\beta}:=\sup_{k\in\mathbb{N}}\max\big\{\|\overline{V}^k\|,\|\widehat{U}^k\|,\|{V}^k\|,\|{U}^k\|\big\}$, for each $k\in\mathbb{N}$,
  \begin{align*}
  \Phi_{\lambda,\mu}(\overline{U}^{k},\overline{V}^{k})
   &\ge{\Phi}_{\lambda,\mu}(\overline{U}^{k+1},\overline{V}^{k+1})
      \! +\!\frac{\underline{\gamma}}{4}\big(\|U^{k+1}-\overline{U}^k\|_F^2
       \!+\!\|V^{k+1}\!-\widehat{V}^{k+1}\|_F^2\big)\nonumber\\
   &\quad\!+\!\frac{\underline{\gamma}}{8\overline{\beta}^2}\big(\|\widehat{X}^{k+1}\!-\!\overline{X}^k\|^2_F
     \! + \!\|\overline{X}^{k+1}\!\!-\!\widehat{X}^{k+1}\|^2_F\big)\!+\!\frac{\underline{\gamma}}{16\overline{\beta}^2} \|\overline{X}^{k}\!\!\!-\!\overline{X}^{k+1}\|^2_F;
   \end{align*}

 \item[(iv)] $\lim_{k\rightarrow \infty}\|\widehat{X}^{k+1}-\overline{X}^k\|_F=0$ and $\lim_{k\rightarrow \infty}\|\overline{X}^{k+1}-\widehat{X}^{k+1}\|_F=0$.
 \end{description}
 \end{corollary}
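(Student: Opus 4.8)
The plan is to extract all four assertions directly from the two sufficient-decrease inequalities \eqref{descrease-ineq0}--\eqref{descrease-ineq1} of Theorem \ref{converge-obj}, which already encode the essential descent; no fresh estimate on $\Phi_{\lambda,\mu}$ is needed. For part (i), I would telescope \eqref{descrease-ineq1}: since $\{\Phi_{\lambda,\mu}(\overline{U}^k,\overline{V}^k)\}_{k\in\mathbb{N}}$ is nonincreasing and converges to $\varpi^*$, summing \eqref{descrease-ineq1} over $k$ bounds the partial sums of $\frac{\underline{\gamma}}{2}\big(\|U^{k+1}-\overline{U}^k\|_F^2+\|V^{k+1}-\widehat{V}^{k+1}\|_F^2\big)$ by $\Phi_{\lambda,\mu}(\overline{U}^0,\overline{V}^0)-\varpi^*<\infty$. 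Hence the series converges and its general term tends to zero, which is exactly (i).

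For part (ii), I would exploit the coercive quadratic term $\frac{\mu}{2}(\|U\|_F^2+\|V\|_F^2)$ in $\Phi_{\lambda,\mu}$. Because $f$ is lower bounded and $\theta\ge 0$ by (C.1), monotonicity of the value sequence gives $\frac{\mu}{2}(\|\overline{U}^k\|_F^2+\|\overline{V}^k\|_F^2)\le\Phi_{\lambda,\mu}(\overline{U}^k,\overline{V}^k)-\inf f\le\Phi_{\lambda,\mu}(\overline{U}^0,\overline{V}^0)-\inf f$, so $\{(\overline{U}^k,\overline{V}^k)\}_{k\in\mathbb{N}}$ is bounded; applying the same reasoning to \eqref{descrease-ineq0} bounds $\{(\widehat{U}^k,\widehat{V}^k)\}_{k\in\mathbb{N}}$. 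Boundedness of $\{U^{k+1}\}$ and $\{V^{k+1}\}$ then follows from (i) together with the boundedness of $\{\overline{U}^k\}$ and $\{\widehat{V}^k\}$, and $\{\widehat{X}^k\},\{\overline{X}^k\}$ are bounded as products of bounded factors. In particular $\overline{\beta}<\infty$, so the constant in (iii)--(iv) is well defined; this ordering, namely settling (ii) before invoking $\overline{\beta}$, is the point I would watch.

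The refined inequality (iii) requires the real bookkeeping and is what I regard as the main obstacle. The pivotal observation is a pair of factorized identities: from \eqref{relation1-Uhat}, $\widehat{X}^{k+1}-\overline{X}^k=(U^{k+1}-\overline{U}^k)(\overline{V}^k)^{\top}$, whence $\|\widehat{X}^{k+1}-\overline{X}^k\|_F\le\|\overline{V}^k\|\,\|U^{k+1}-\overline{U}^k\|_F\le\overline{\beta}\|U^{k+1}-\overline{U}^k\|_F$; and from \eqref{relation1-Ubar}, $\overline{X}^{k+1}-\widehat{X}^{k+1}=\widehat{U}^{k+1}(V^{k+1}-\widehat{V}^{k+1})^{\top}$, whence $\|\overline{X}^{k+1}-\widehat{X}^{k+1}\|_F\le\overline{\beta}\|V^{k+1}-\widehat{V}^{k+1}\|_F$. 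Starting from \eqref{descrease-ineq1}, I would split the coefficient $\underline{\gamma}/2$ as $\underline{\gamma}/4+\underline{\gamma}/4$, retaining the first quarter verbatim as the leading term of (iii) and converting the second quarter, via the two bounds above, into $\frac{\underline{\gamma}}{4\overline{\beta}^2}\big(\|\widehat{X}^{k+1}-\overline{X}^k\|_F^2+\|\overline{X}^{k+1}-\widehat{X}^{k+1}\|_F^2\big)$. Halving once more, one half yields the $\frac{\underline{\gamma}}{8\overline{\beta}^2}$ term, while on the remaining half the triangle inequality $\|\overline{X}^{k+1}-\overline{X}^k\|_F^2\le 2\big(\|\widehat{X}^{k+1}-\overline{X}^k\|_F^2+\|\overline{X}^{k+1}-\widehat{X}^{k+1}\|_F^2\big)$ produces the final $\frac{\underline{\gamma}}{16\overline{\beta}^2}\|\overline{X}^k-\overline{X}^{k+1}\|_F^2$ term; the only delicate part is tracking the constants through the repeated halving so that the three displayed coefficients emerge exactly.

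Finally, (iv) is immediate: combining the two Frobenius bounds from the previous paragraph with (i) gives $\|\widehat{X}^{k+1}-\overline{X}^k\|_F\le\overline{\beta}\|U^{k+1}-\overline{U}^k\|_F\to 0$ and $\|\overline{X}^{k+1}-\widehat{X}^{k+1}\|_F\le\overline{\beta}\|V^{k+1}-\widehat{V}^{k+1}\|_F\to 0$, so no further work is needed beyond what (i)--(iii) already supply.
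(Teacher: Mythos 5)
Your proposal is correct and follows essentially the same route as the paper's proof: telescoping the sufficient-decrease inequalities of Theorem \ref{converge-obj} for (i), coercivity of $\Phi_{\lambda,\mu}$ (lower-boundedness of $f$ plus the $\frac{\mu}{2}$-quadratic term) for (ii), and the factorized identities from \eqref{relation1-Uhat}--\eqref{relation1-Ubar} with exactly the same coefficient splitting $\underline{\gamma}/2\to\underline{\gamma}/4\to\underline{\gamma}/8\overline{\beta}^2,\underline{\gamma}/16\overline{\beta}^2$ for (iii). The only cosmetic difference is in (iv), which you deduce directly from (i) and the product bounds, whereas the paper invokes (iii) together with convergence of the value sequence; both are immediate.
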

 \begin{proof}
 {\bf (i)-(ii)} Part (i) is immediate by Theorem \ref{converge-obj}, and we only need to prove part (ii).
 By Theorem \ref{converge-obj}, ${\Phi}_{\lambda,\mu}(\overline{U}^{k+1},\overline{V}^{k+1})\le{\Phi}_{\lambda,\mu}(\widehat{U}^{k+1},\widehat{V}^{k+1})\le{\Phi}_{\lambda,\mu}(\overline{U}^{0},\overline{V}^{0})$ for each $k\in\mathbb{N}$. Recall that $f$ is lower bounded, so the function   ${\Phi}_{\lambda,\mu}$ is coercive. Thus, the sequence $\big\{(\widehat{U}^{k},\widehat{V}^{k},               \overline{U}^{k},\overline{V}^{k})\big\}_{k\in\mathbb{N}}$ is bounded. Along with part (i), the sequence $\{(U^k,V^k)\}_{k\in\mathbb{N}}$ is bounded. In addition, the boundedness of
 $\big\{(\widehat{U}^{k},\widehat{V}^{k},             \overline{U}^{k},\overline{V}^{k})\big\}_{k\in\mathbb{N}}$ implies that of $\{(\widehat{X}^{k},\overline{X}^k)\}_{k\in\mathbb{N}}$ because $\widehat{X}^{k}=\widehat{U}^{k}(\widehat{V}^{k})^{\top}$ and $\overline{X}^{k}=\overline{U}^{k}(\overline{V}^{k})^{\top}$ for each $k$ by \eqref{relation1-Uhat}-\eqref{relation1-Ubar}.

 \noindent
 {\bf(iii)} Fix any $k\in\mathbb{N}$. From equations \eqref{relation1-Uhat}-\eqref{relation1-Ubar}, it follows that
 \begin{align*}
  \!\|\widehat{X}^{k+1}-\overline{X}^k\|_F=\big\|U^{k+1}(\overline{V}^k)^{\top}-\overline{U}^k(\overline{V}^k)^{\top}\big\|_F
 \le\|\overline{V}^k\|\|U^{k+1}-\overline{U}^{k}\|_F,\qquad\\
 \!\|\overline{X}^{k+1}\!-\widehat{X}^{k+1}\|_F
  =\big\|\widehat{U}^{k+1}({V}^{k+1})^{\top}\!-\widehat{U}^{k+1}(\widehat{V}^{k+1})^{\top}\big\|_F
  \le\|\widehat{U}^{k+1}\|\|V^{k+1}-\widehat{V}^{k+1}\|_F.
 \end{align*}
 Combining these two inequalities with the definition of $\overline{\beta}$ and Theorem \ref{converge-obj} leads to
 \begin{align*}
  \Phi_{\lambda,\mu}(\overline{U}^{k},\overline{V}^{k})
   &\ge{\Phi}_{\lambda,\mu}(\overline{U}^{k+1},\overline{V}^{k+1})
  +({\underline{\gamma}}/{4})\big(\|U^{k+1}-\overline{U}^k\|_F^2
  +\|V^{k+1}-\widehat{V}^{k+1}\|_F^2\big)\nonumber\\
  &\quad +\frac{\underline{\gamma}}{4\overline{\beta}^{2}}\big(\|\widehat{X}^{k+1}-\overline{X}^k\|^2_F
  +\|\overline{X}^{k+1}-\widehat{X}^{k+1}\|^2_F\big).
 \end{align*}
 Along with $2\|\widehat{X}^{k+1}-\overline{X}^k\|_F^2+2\|\overline{X}^{k+1}\!-\!\widehat{X}^{k+1}\|_F^2\ge\|\overline{X}^{k}\!-\!\overline{X}^{k+1}\|_F^2$, we get the result.

 \noindent
 {\bf(iv)} The result follows by part (iii) and the convergence of $\{\Phi_{\lambda,\mu}(\overline{U}^k,\overline{V}^k)\}_{k\in\mathbb{N}}$.
 \end{proof}
  \begin{remark}\label{remark41}
 By Corollary \ref{corollary4.1} (ii) and Remark \ref{remark-MPAMSC} (c), there exists a constant $\widehat{\beta}>0$ such that for all $k\in\mathbb{N}$ and $i\in[r]$,   $(\mu+\underline{\gamma})\le(\Lambda_{ii}^k)^2\le\widehat{\beta}$ and $(\mu+\underline{\gamma})\le(\Delta_{ii}^k)^2\le\widehat{\beta}$, where $\Lambda^k$ and $\Delta^k$ are the diagonal matrices appearing in Remark \ref{remark-MPAMSC} (c).
 \end{remark}
 \subsection{Subsequence convergence}\label{sec4.2}

 For each $k\in\mathbb{N}$, write $W^{k}\!:=\big(\widehat{U}^{k},\widehat{V}^{k},  \overline{U}^{k},\overline{V}^{k},\widehat{X}^{k},\overline{X}^{k}\big)$. To establish the subsequence convergence of $\{W^k\}_{k\in\mathbb{N}}$, that is, to prove that every accumulation point of $\{W^k\}_{k\in\mathbb{N}}$ is a stationary point of problem \eqref{prob}, we need the following technical lemma.
 \begin{lemma}\label{Lemma4.1}
  Under Assumption \ref{ass1}, the following assertions hold. 
  \begin{description}
  \item[(i)] The set of cluster points of $\{W^{k}\}_{k\in\mathbb{N}}$, denoted by $\mathcal{W}^*$, is nonempty and compact.		
		
  \item[(ii)] For each $W=(\widehat{U},\widehat{V},\overline{U},\overline{V},\widehat{X},\overline{X})\in\mathcal{W}^*$, it holds that
 $\widehat{U}\widehat{V}^{\top}=\widehat{X}=\overline{X}=\overline{U}\overline{V}^{\top}$ with ${\rm rank}(\overline{X})={\rm rank}(\widehat{X})=\|\overline{U}\|_{2,0}=\|\widehat{U}\|_{2,0}:=\overline{r}$,
  $(\widehat{U},\widehat{V})\!=\big(\widehat{P}[\Sigma_r(\widehat{X})]^{\frac{1}{2}},
  \widehat{R}[\Sigma_r(\widehat{X})]^{\frac{1}{2}}\big)$ for some $(\widehat{P},\widehat{R})\in\mathbb{O}^{n\times r}\times\mathbb{O}^{m\times r}$ such that $\widehat{X}\!\!=\!\!\widehat{P}\Sigma_{r}(\widehat{X})\widehat{R}^{\top}$, and $(\overline{U}\!,\overline{V})\!\!=\!\!\big(\overline{R}[\Sigma_r(\overline{X})]^{\frac{1}{2}},
  \overline{Q}[\Sigma_r(\overline{X})]^{\frac{1}{2}}\big)$
  for some $(\overline{R},\overline{Q})\in\mathbb{O}^{n\times r}\times\mathbb{O}^{m\times r}$ such that $\overline{X}=\overline{R}\Sigma_{r}(\overline{X})\overline{Q}^{\top}$.
  \end{description}
 \end{lemma}
 \begin{proof}
 Item (i) is immediate by Corollary \ref{corollary4.1} (ii), so it suffices to prove item (ii). Pick any $W=(\widehat{U},\widehat{V},\overline{U},\overline{V},\widehat{X},\overline{X})\in\mathcal{W}^*$. There exists an index set $\mathcal{K}\subset\mathbb{N}$ such that $\lim_{\mathcal{K}\ni k\to\infty}W^k=W$. From steps 2 and 4 of Algorithm \ref{MPAMSC}, it follows that
 \begin{align}\label{tempU-equa41}
 \!\widehat{U}\!\!=\!\!\lim_{\mathcal{K}\ni k\to\infty} \!\!\widehat{U}^{k+1}\!\!=\!\!\lim_{\mathcal{K}\ni k\to\infty}\!\!\widehat{P}^{k+1}\widehat{D}^{k+1}\ \ {\rm and}\ \ \widehat{V}\!\!=\!\!\lim_{\mathcal{K}\ni k\to\infty}\widehat{V}^{k+1}\!\!=\!\!\lim_{\mathcal{K}\ni k\to\infty}\!\!\widehat{R}^{k+1}\widehat{D}^{k+1},\\
 \label{tempV-equa41}
  \overline{U}\!\!=\!\!\lim_{\mathcal{K}\ni k\to\infty}\!\!\overline{U}^{k+1}\!\!=\!\!\lim_{\mathcal{K}\ni k\to\infty}\!\!\overline{R}^{k+1}\overline{D}^{k+1}\ \ {\rm and}\ \ \overline{V}\!\!=\!\!\lim_{\mathcal{K}\ni k\to\infty}\!\overline{V}^{k+1}\!\!=\!\!\lim_{\mathcal{K}\ni k\to\infty}\!\overline{P}^{k+1}\overline{D}^{k+1}
 \end{align}
 with $\widehat{R}^{k+1}=\overline{P}^{k}\widehat{Q}^{k+1}$ and $\overline{R}^{k+1}=\widehat{P}^{k+1}\overline{Q}^{k+1}$ for each $k\in\mathbb{N}$. Note that $\{\widehat{R}^{k+1}\}_{k\in\mathbb{N}}\subset\mathbb{O}^{m\times r}$ and $\{\widehat{P}^{k+1}\}_{k\in \mathbb{N}}\subset\mathbb{O}^{n\times r}$. By the compactness of $\mathbb{O}^{l\times r}$, there exists an index set $\mathcal{K}_1\subset\mathcal{K}$ such that $\{\widehat{R}^{k+1}\}_{k\in\mathcal{K}_1}$ and $\{\widehat{P}^{k+1}\}_{k\in\mathcal{K}_1}$ are convergent, i.e., there are $\widehat{R}\in \mathbb{O}^{m\times r}$ and $\widehat{P}\!\in\!\mathbb{O}^{n\times r}$ such that
$\lim_{\mathcal{K}_1\ni k\to\infty} \widehat{R}^{k+1}=\widehat{R}$ and $\lim_{\mathcal{K}_1\ni k\to\infty}\widehat{P}^{k+1}=\widehat{P}$. Together with  $\lim_{\mathcal{K}\ni k\to\infty}\widehat{D}^{k+1}=[\Sigma_r(\widehat{X})]^{{1}/{2}}$ and the above \eqref{tempU-equa41}, we have
\begin{equation}\label{equa-widehatUV}
 \widehat{U}\!=\!\!\lim_{\mathcal{K}_1\ni
 k\to\infty}\!\widehat{P}^{k+1}\widehat{D}^{k+1}\!=\!\widehat{P}[\Sigma_{r}(\widehat{X})]^{\frac{1}{2}}\ \ {\rm and}\ \ \widehat{V}\!=\!\!\lim_{\mathcal{K}_1\ni k\to\infty}\!\widehat{R}^{k+1}\widehat{D}^{k+1}\!=\!\widehat{R}[\Sigma_r(\widehat{X})]^{\frac{1}{2}}.
 \end{equation}
 Combining with $\widehat{X}=\lim_{\mathcal{K}\ni k\to\infty}\widehat{X}^k=\lim_{\mathcal{K}\ni k\to\infty}\widehat{U}^k(\widehat{V}^k)^{\top}=\widehat{U}\widehat{V}^{\top}$ results in $\widehat{X}=\widehat{P}\Sigma_{r}(\widehat{X})\widehat{R}^{\top}$.
 By using the above \eqref{tempV-equa41} and the similar arguments, there are $\overline{R}\!\in\!\mathbb{O}^{n\times r}$ and $\overline{Q}\!\in\!\mathbb{O}^{m\times r}$ such that $\overline{U}\!=\!\overline{R}[\Sigma_{r}(\overline{X})]^{{1}/{2}}$ and  $\overline{V}\!=\!\overline{Q}[\Sigma_{r}(\overline{X})]^{{1}/{2}}$. Along with $\overline{X}=\lim_{\mathcal{K}\ni k\to\infty}\overline{X}^k=\lim_{\mathcal{K}\ni k\to\infty}\overline{U}^k(\overline{V}^k)^{\top}=\overline{U}\overline{V}^{\top}$, we obtain $\overline{X}=\overline{R}\Sigma_{r}(\overline{X})\overline{Q}^{\top}$. Using Corollary \ref{corollary4.1} (iv) leads to $0=\lim_{\mathcal{K}\ni k\to\infty}\|\widehat{X}^k-\overline{X}^k\|_F=\|\widehat{X}-\overline{X}\|_F$.
 \end{proof}
 \begin{theorem}\label{Sub-convergence}
  Under Assumption \ref{ass1}, for each $W=(\widehat{U},\widehat{V},\overline{U},\overline{V},\widehat{X},\overline{X})\in\mathcal{W}^*$, $(\overline{U},\overline{V})$ and     $(\widehat{U},\widehat{V})$ are the stationary points of \eqref{prob} and $\Phi_{\lambda,\mu}(\overline{U},\overline{V})=\Phi_{\lambda,\mu}(\widehat{U},\widehat{V})=\varpi^*$.
 \end{theorem}
 \begin{proof}
  Pick any $W=(\widehat{U},\widehat{V},\overline{U},\overline{V},\widehat{X},\overline{X})\in\mathcal{W}^*$. There exists an index set $\mathcal{K}\subset\mathbb{N}$ such that $\lim_{\mathcal{K}\ni k\to\infty}W^k=W$. We first claim that the following two limits hold:
  \begin{equation}\label{vartheta-limit}
   \lim_{\mathcal{K}\ni k\to\infty}\vartheta(U^{k+1})=\vartheta(\overline{U})
   \ \ {\rm and}\ \ \lim_{\mathcal{K}\ni k\to\infty}\vartheta(V^{k+1})=\vartheta(\widehat{V}).
  \end{equation}
  Indeed, for each $k\in\mathbb{N}$, by the definition of $U^{k+1}$ in step 1 and the expression of $\widehat{\Phi}_{\lambda,\mu}$,
 \begin{align*}
  &\widehat{F}(U^{k+1},\overline{V}^k,\overline{U}^k,\overline{V}^k)+
  \frac{\mu}{2}\|{U}^{k+1}\|_F^2+\lambda \vartheta({U}^{k+1})+\frac{\gamma_{1,k}}{2}\|U^{k+1}-\overline{U}^k\|_F^2\\
  &\leq \widehat{F}(\overline{U},\overline{V}^k,\overline{U}^k,\overline{V}^k)
   +\frac{\mu}{2}\|\overline{U}\|_F^2+\lambda \vartheta(\overline{U})
   +\frac{\gamma_{1,k}}{2}\|\overline{U}-\overline{U}^k\|_F^2.
 \end{align*}
 From Corollary \ref{corollary4.1} (i) and $\lim_{\mathcal{K}\ni k\to\infty}\overline{U}^k=\overline{U}$, we have $\lim_{\mathcal{K}\ni k\to\infty}U^{k+1}=\overline{U}$. Now passing the limit $\mathcal{K}\ni k\to\infty$ to the above inequality and using $\lim_{\mathcal{K}\ni k\to\infty}W^k=W$, Corollary \ref{corollary4.1} (i) and the continuity of $\widehat{F}$ results in $\limsup_{\mathcal{K}\ni k\to\infty} \vartheta({U}^{k+1})\le\vartheta(\overline{U})$.
 Along with the lower semicontinuity of $\vartheta$, we obtain $\lim_{ \mathcal{K}\ni k\to\infty}\vartheta({U}^{k+1})=\vartheta(\overline{U})$. Similarly, for each $k\in\mathbb{N}$, by the definition $V^{k+1}$ in step 3,
 \begin{align*}
  &\widehat{F}(\widehat{U}^{k+1},{V}^{k+1},\widehat{U}^{k+1},\widehat{V}^{k+1})+
  \frac{\mu}{2}\|V^{k+1}\|_F^2+\lambda \vartheta(V^{k+1})+\frac{\gamma_{1,k}}{2}\|V^{k+1}-\widehat{V}^{k+1}\|_F^2\\
  &\leq \widehat{F}(\widehat{U}^{k+1},\widehat{V},\widehat{U}^{k+1},\widehat{V}^{k+1}) +\frac{\mu}{2}\|\widehat{V}\|_F^2+\lambda \vartheta(\widehat{V})
   +\frac{\gamma_{1,k}}{2}\|{\widehat{V}^{k+1}}-\widehat{V}\|_F^2.
 \end{align*}
 Following the same arguments as above leads to the second limit in \eqref{vartheta-limit}. 
 
 Let $J=[\overline{r}],\,\overline{J}=[r]\setminus J$ and $\Lambda\!:={\rm Diag}(\theta'(\|\overline{U}_1\|),\ldots,\theta'(\|\overline{U}_{\overline{r}}\|))$. We next prove that
 \begin{subnumcases}{}\label{EgradF-Uk2}	
  0=\nabla f(\overline{X})
  \overline{V}_{\!J}+\mu \overline{U}_{\!J}+\lambda\overline{U}_{\!J}\Sigma_1^{-1/2}\Lambda,\\
 \label{EgradF-Vk2} 	
 0=\nabla f(\widehat{X})^{\top}\widehat{U}_{\!J}+\mu \widehat{V}_J  +\lambda\widehat{V}_{\!J}\Sigma_1^{-1/2}\Lambda
 \end{subnumcases}
 with $\Sigma_1={\rm Diag}(\sigma_1(\overline{X}),\ldots,\sigma_{\overline{r}}(\overline{X}))$.
 Passing the limit $\mathcal{K}\ni k\to\infty$ to the inclusions in Remark \ref{remark-MPAMSC} (b), and using \eqref{vartheta-limit} and Corollary \ref{corollary4.1} (i) and (iv) results in
 \[
  0\in\nabla f(\overline{X})\overline{V}+\mu \overline{U}+\lambda\partial\vartheta(\overline{U})\ \ {\rm and}\ \ 
 0\in\nabla\! f(\widehat{X})^{\top}\widehat{U}+\mu\widehat{V}
  +\lambda\partial\vartheta(\widehat{V}). 
 \]
 By Lemma \ref{Lemma4.1} (ii),  $\overline{U}_{\overline{J}}=\widehat{U}_{\overline{J}}=0$ and $\overline{V}_{\overline{J}}=\widehat{V}_{\overline{J}}=0$. Along with the above two inclusions, using Lemma \ref{subdiff-vtheta} leads to $0\in\partial\phi(0)$, where $\phi$ is the function defined in Lemma \ref{subdiff-vtheta}, and for each $j\in J$,
 \begin{subnumcases}{}\label{gradF-Uk2}
  0=\nabla f(\overline{X})
  \overline{V}_{\!j}+\mu \overline{U}_{j}+\lambda\theta'(\|\overline{U}_{\!j}\|)\|\overline{U}_j\|^{-1}\overline{U}_{j},\\
 \label{gradF-Vk2} 	
 0=\nabla f(\widehat{X})^{\top}\widehat{U}_{j}+\mu \widehat{V}_j  +\lambda\theta'(\|\widehat{V}_{j}\|)\|\widehat{V}_j\|^{-1}\widehat{V}_{j}.
 \end{subnumcases}
 By Lemma \ref{Lemma4.1} (ii), $\|\overline{U}_{\!j}\|=\|\widehat{V}_j\|=\sigma_j(\widehat{X})^{1/2}=\sigma_j(\overline{X})^{1/2}$ for each $j\in J$. Consequently, 
 \begin{equation*}
  {\rm Diag}\big(\|\overline{U}_1\|,\ldots,\|\overline{U}_{\overline{r}}\|\big)=\Sigma_1^{1/2}\ \ {\rm and}\ \
  {\rm Diag}\big(\|\widehat{V}_1\|,\ldots,\|\widehat{V}_{\overline{r}}\|\big)=\Sigma_1^{1/2}.
 \end{equation*}
 Then, for each $j\in J$, the above \eqref{gradF-Uk2}-\eqref{gradF-Vk2} can be compactly written as \eqref{EgradF-Uk2}-\eqref{EgradF-Vk2}. 	
 
 By comparing \eqref{EgradF-Uk2}-\eqref{EgradF-Vk2} with Definition \ref{def-spoint}, and using Lemma \ref{subdiff-vtheta}, $0\in\partial\phi(0)$, and $\overline{U}_{\overline{J}}=\widehat{U}_{\overline{J}}=0$ and $\overline{V}_{\overline{J}}=\widehat{V}_{\overline{J}}=0$, the rest only needs to prove that 
 \begin{subnumcases}{}\label{aim-EgradF-Uk2}	
  0=\nabla f(\widehat{X})
  \widehat{V}_{\!J}+\mu \widehat{U}_{\!J}+\lambda\widehat{U}_{\!J}\Sigma_1^{-1/2}\Lambda,\\
 \label{aim-EgradF-Vk2} 	
 0=\nabla f(\overline{X})^{\top}\overline{U}_{\!J}+\mu \overline{V}_{\!J} +\lambda\overline{V}_{\!J}\Sigma_1^{-1/2}\Lambda,
 \end{subnumcases}
 since \eqref{aim-EgradF-Uk2} and \eqref{EgradF-Vk2} imply that $(\widehat{U},\widehat{V})$ is a stationary point of \eqref{prob}, and while \eqref{aim-EgradF-Vk2} and \eqref{EgradF-Uk2} imply that $(\overline{U},\overline{V})$ is a stationary point of \eqref{prob}. By Lemma \ref{Lemma4.1} (ii), 
 \begin{equation}\label{relation-temp}
 \Sigma_r(\widehat{X})=\Sigma_r(\overline{X})\!:=\Sigma_r\ {\rm and}\ \widehat{P}\Sigma_r\widehat{R}^{\top}\!=\widehat{U}\widehat{V}^{\top}\!=\widehat{X}=\overline{X}=\overline{U}\overline{V}^{\top}=\overline{R}\Sigma_r\overline{Q}^{\top}. 
 \end{equation}
 Recall that $\widehat{P},\overline{R}\in\mathbb{O}^{n\times r}$ and $\widehat{R},\overline{Q}\in\mathbb{O}^{m\times r}$. There exist $\widehat{P}^{\perp},\overline{R}^{\perp}\in\mathbb{O}^{n\times(n-r)}$ and $\widehat{R}^{\perp},\overline{Q}^{\perp}\in\mathbb{O}^{m\times(m-r)}$ such that $[\widehat{P}\ \ \widehat{P}^{\perp}],[\overline{R}\ \ \overline{R}^{\perp}]\in\mathbb{O}^n$ and $[\widehat{R}\ \ \widehat{R}^{\perp}],[\overline{Q}\ \ \overline{Q}^{\perp}]\in\mathbb{O}^m$. Moreover, in view of the second group of equalities in \eqref{relation-temp}, it holds
  \[
   \big[\widehat{P}\ \ \widehat{P}^{\perp}\big]
  \begin{pmatrix}
  \Sigma_r & 0\\ 0 & 0
  \end{pmatrix}\big[\widehat{R}\ \ \widehat{R}^{\perp}\big]^{\top}
  =\big[\overline{R}\ \ \overline{R}^{\perp}\big]
  \begin{pmatrix}
   \Sigma_r & 0\\ 0 & 0
  \end{pmatrix}\big[\overline{Q}\ \ \overline{Q}^{\perp}\big]^{\top}.
  \]
  Let $\overline{\mu}_1>\cdots>\overline{\mu}_{\kappa}$ be the distinct singular values of $\overline{X}=\widehat{X}$. For each $l\in[\kappa]$, write $\alpha_l\!:=\!\{j\in[n]\ |\ \sigma_j(\overline{X})\!=\!\overline{\mu}_l\}$. From the above equality and \cite[Proposition 5]{Ding14}, there is a block diagonal orthogonal $\widetilde{Q}={\rm BlkDiag}(\widetilde{Q}_1,\ldots,\widetilde{Q}_{\kappa})$ with $\widetilde{Q}_l\in\mathbb{O}^{|\alpha_l|}$ such that
  \begin{equation}\label{relation-temp2}
  \big[\widehat{P}\ \ \widehat{P}^{\perp}\big]=\big[\overline{R}\ \ \overline{R}^{\perp}\big]\widetilde{Q},\ \big[\widehat{R}\ \ \widehat{R}^{\perp}\big]=\big[\overline{Q}\ \ \overline{Q}^{\perp}\big]\widetilde{Q}\ \ {\rm and}\ \
  \widetilde{Q}\begin{pmatrix}
  \Sigma_r & 0\\ 0 & 0
  \end{pmatrix}=
  \begin{pmatrix}
  \Sigma_r & 0\\ 0 & 0
  \end{pmatrix}\widetilde{Q}.
  \end{equation}
 Let $\widetilde{Q}^{1}={\rm BlkDiag}(\widetilde{Q}_1,\ldots,\widetilde{Q}_{\kappa-1})\in\mathbb{O}^{\overline{r}}$, and let $\widehat{P}^1$ and $\widehat{R}^1$ be the matrices consisting of the first $\overline{r}$ columns of $\widehat{P}$ and $\widehat{R}$. Then $\widehat{P}^1=\overline{R}\widetilde{Q}^{1}$ and $\widehat{R}^1=\overline{Q}\widetilde{Q}^{1}$. Also, from the third equality in \eqref{relation-temp2} and $\Sigma_1={\rm Diag}(\sigma_1(\overline{X}),\ldots,\sigma_{\overline{r}}(\overline{X}))$, we have $\widetilde{Q}^{1}\Sigma_{1}=\Sigma_{1}\widetilde{Q}^{1}$. Along with $\widehat{U}=\widehat{P}\Sigma_{r}^{1/2},\overline{U}=\overline{R}\Sigma_{r}^{1/2}$ and $\widehat{V}=\widehat{R}\Sigma_{r}^{1/2},\overline{V}=\overline{Q}\Sigma_r^{1/2}$ by Lemma \ref{Lemma4.1} (ii), it holds $\widehat{U}_{J}=\widehat{P}^1\Sigma_{1}^{1/2}=\overline{R}\widetilde{Q}^{1}\Sigma_{1}^{1/2}=\overline{R}\Sigma_{1}^{1/2}\widetilde{Q}^{1}=\overline{U}_{\!J}\widetilde{Q}^1$ and $\widehat{V}_{\!J}=\widehat{R}^1\Sigma_{1}^{1/2}=\overline{Q}\widetilde{Q}^{1}\Sigma_{1}^{1/2}=\overline{Q}\Sigma_{1}^{1/2}\widetilde{Q}^{1}=\overline{V}_{\!J}\widetilde{Q}^1$. Substituting $\widehat{U}_{J}=\overline{U}_{\!J}\widetilde{Q}^1, \widehat{V}_{\!J}=\overline{V}_{\!J}\widetilde{Q}^1$ and $\overline{X}=\widehat{X}$ into \eqref{EgradF-Uk2}-\eqref{EgradF-Vk2} yields 
 \begin{subnumcases}{}\label{EgradF-Uk3}	
  \nabla f(\widehat{X})\widehat{V}_{\!J}+\mu \widehat{U}_{\!J} + \lambda\overline{U}_{\!J}\Sigma_1^{-\frac{1}{2}}\Lambda\widetilde{Q}^1=0,\\
  \label{EgradF-Vk3}
  \nabla f(\overline{X})^{\top}\overline{U}_{\!J}+\mu \overline{V}_{\!J} + \lambda\widehat{V}_J\Sigma_1^{-\frac{1}{2}}\Lambda(\widetilde{Q}^1)^{\top}=0.
  \end{subnumcases}
 Recall that $\|\overline{U}_{\!j}\|=\|\widehat{V}_j\|=\sigma_j(\widehat{X})^{1/2}=\sigma_j(\overline{X})^{1/2}$ for each $j\in J$. By the expressions of $\widetilde{Q}^1$ and $\Lambda$, we have $\Lambda \widetilde{Q}^1=\widetilde{Q}^1\Lambda$ and $\Lambda (\widetilde{Q}^1)^{\top}=(\widetilde{Q}^1)^{\top}\Lambda$.
Then, it holds that 
\[
 \Sigma_1^{-\frac{1}{2}}\Lambda\widetilde{Q}^1=\!\Sigma_1^{-\frac{1}{2}}\widetilde{Q}^1\Lambda=\!\widetilde{Q}^1\Sigma_1^{-\frac{1}{2}}\Lambda\ \ {\rm and}\ \ \Sigma_1^{-\frac{1}{2}}\Lambda(\widetilde{Q}^1)^{\top}\!=\Sigma_1^{-\frac{1}{2}}(\widetilde{Q}^1)^{\top}\Lambda\!=(\widetilde{Q}^1)^{\top}\Sigma_1^{-\frac{1}{2}}\Lambda. 
 \]
 Together with equations \eqref{EgradF-Uk3}-\eqref{EgradF-Vk3}, $\widehat{U}_{J}=\overline{U}_{\!J}\widetilde{Q}^1$ and $\widehat{V}_J=\overline{V}_{\!J} \widetilde{Q}^1$, we obtain the desired \eqref{aim-EgradF-Uk2}-\eqref{aim-EgradF-Vk2}, and the first part of the conclusions follows. 
 
 For the second part, using Lemma \ref{Lemma4.1} (ii) and the expression of $\Phi_{\lambda,\mu}$ leads to $\Phi_{\lambda,\mu}(\overline{U},\overline{V})=\Phi_{\lambda,\mu}(\widehat{U},\widehat{V})$. By the continuity of $\widehat{F}$ and \eqref{vartheta-limit}, using Theorem \ref{converge-obj} yields $\Phi_{\lambda,\mu}(\overline{U},\overline{V})=\varpi^*=\Phi_{\lambda,\mu}(\widehat{U},\widehat{V})$. The result holds by the arbitrariness of $W\in \mathcal{W}^*$.
\end{proof}
\begin{remark}\label{remark1-converge}
 Theorems \ref{converge-obj} and \ref{Sub-convergence} respectively provide the convergence certificate of the objective value sequences and the subsequence convergence guarantee for Algorithm \ref{MPAMSC} to solve \eqref{prob} with $\theta$ from one of $\theta_1$-$\theta_4$ in Table \ref{tab1}. When $\theta=\theta_2$, these results provide the corresponding theoretical certificates for softImpute-ALS \cite[Algorithm 3.1]{Hastie15}. 
\end{remark}
\subsection{Full convergence of iterate sequence}\label{sec4.3}
 
 Next we focus on the full convergence of the iterate sequence under additional Assumption \ref{ass2}, which imposes a restriction on the proximal mapping of $\theta$ associated with $\gamma\in[\lambda\widehat{\beta}^{-1},\lambda(\mu\!+\!\underline{\gamma})^{-1}]$. It is easy to check that Assumption \ref{ass2} holds for $\theta=\theta_1$ and $\theta_4$. 
 \begin{assumption}\label{ass2}
  Fix any $\gamma\in[\lambda\widehat{\beta}^{-1},\lambda(\mu\!+\!\underline{\gamma})^{-1}]$ where $\widehat{\beta}$ is the same as in Remark \ref{remark41}. There exists $c_p>0$ such that for any $t\ge 0$, either $\mathcal{P}_{\!\gamma}\theta(t)=\{0\}$ or $\min_{s\in\mathcal{P}_{\!\gamma}\theta(t)}s\ge c_p$.
 \end{assumption}

 The following lemma shows that under Assumptions \ref{ass1}-\ref{ass2}, the rank function sequences $\{{\rm rank}(\widehat{X}^k)\}_{k\in\mathbb{N}}$ and $\{{\rm rank}(\overline{X}^k)\}_{k\in\mathbb{N}}$ converge to the same value.
 \begin{lemma}\label{rank-prop}
  Consider any $W=\!(\widehat{U},\widehat{V},\overline{U},\overline{V},\widehat{X},\overline{X})\in\mathcal{W}^*$. Let $\overline{k}$ be the same as in Proposition \ref{prop-nzind}. Then, under Assumptions \ref{ass1}-\ref{ass2}, 
  \begin{description}
  \item[(i)] when $k\ge\overline{k}$, ${\rm rank}(\widehat{X}^{k})={\rm rank}(\overline{X}^{k})=\|\widehat{U}^{k}\|_{2,0}=\|\overline{U}^{k}\|_{2,0}=\|\overline{U}\|_{2,0}$;

  \item[(ii)] when $k\ge\overline{k}$, $\max\{\sigma_{i}(\overline{X}^k),\sigma_{i}(\widehat{X}^k)\}\!=\!0$ for $i=\overline{r}\!+\!1,\ldots,r$;

  \item[(iii)] there exist $\alpha>0$ and $\widetilde{k}\!\ge\overline{k}$ such that $\min\{\sigma_{\overline{r}}(\overline{X}^k),\sigma_{\overline{r}}(\widehat{X}^k)\}\ge\alpha$ for $k\ge\widetilde{k}$.
  \end{description}  
 \end{lemma}
 \begin{proof}
 {\bf(i)-(ii)} By Proposition \ref{prop-nzind}, for all $k\ge\overline{k}$, $J_{\overline{U}^{k+1}}=J_{\overline{U}^{k}}=J_{U^k}\!:=J$, so  $\overline{U}^{k+1}=[\overline{U}_{\!J}^{k+1}\ 0]$. 
 Since $W\in\mathcal{W}^*$, there exists an index set $\mathcal{K}\subset\mathbb{N}$ such that $\lim_{\mathcal{K}\ni k\to\infty}\overline{U}^k=\overline{U}$, which by the first limit in Corollary \ref{corollary4.1} (i) implies that $\lim_{\mathcal{K}\ni k\to\infty}U^{k+1}=\overline{U}$. 
 Combining equation \eqref{Uk-def} in Remark \ref{remark-MPAMSC} (c) and Assumption \ref{ass2} leads to $\min_{i\in J}\big\|[{U}_{\!J}^{k+1}]_i\big\|\ge c_p>0$ for all $k\ge\overline{k}$, which means that $\|\overline{U}\|_{2,0}\ge|J|$. In addition, it is clear that $\|\overline{U}\|_{2,0}\le |J|$. Thus, $J=\|\overline{U}\|_{2,0}$. The conclusions of item (i) and (ii) then follow \eqref{rank-equal}.

 \noindent
 {\bf(iii)} Suppose on the contrary that the conclusion does not hold. There exists an index set $\mathcal{K}\subset\mathbb{N}$ such that $\lim_{\mathcal{K}\ni k\to\infty}\sigma_{\overline{r}}(\overline{X}^k)=0$. By the continuity of $\sigma_{\overline{r}}(\cdot)$, the sequence $\{\overline{X}^k\}_{k\in\mathbb{N}}$ has a cluster point, say $\widetilde{X}$, satisfying ${\rm rank}(\widetilde{X})\le\overline{r}-1$, which is a contradiction to the result of item (i). Thus, we complete the proof. 
 \end{proof}

 Next we apply the $\sin\Theta$ theorem of \cite{Dopico00} to establish a crucial property of $\{W^k\}_{k\in\mathbb{N}}$, which will be used later to control the distance ${\rm dist}(0,\partial\Phi_{\lambda,\mu}(\overline{U}^k,\overline{V}^k))$.
 \begin{lemma}\label{lemma-UVk}
 For each $k$, let $\widehat{D}_1^{k}\!\!=\!{\rm Diag}(\widehat{D}_{11}^{k},\ldots,\!\widehat{D}_{\overline{r}\overline{r}}^{k})$ and $\overline{D}_1^{k}\!\!=\!{\rm Diag}(\overline{D}_{11}^{k},\ldots,\!\overline{D}_{\overline{r}\overline{r}}^{k})$. Then, under Assumptions \ref{ass1}-\ref{ass2}, for each $k\ge\widetilde{k}$, there exist $R_1^k,R_2^k\in\mathbb{O}^{\overline{r}}$ such that with $A^k={\rm BlkDiag}\big((\widehat{D}_1^k)^{-1}R_1^k\widehat{D}_1^k,0\big)\in\mathbb{R}^{r\times r}$ and $B^k={\rm BlkDiag}\big((\overline{D}_1^k)^{-1}R_2^k\overline{D}_1^k,0\big)\in\mathbb{R}^{r\times r}$,
 \begin{align*}
 &\max\big\{\|\overline{U}^{k+1}\!-\widehat{U}^{k+1}A^{k+1}\|_F,\|\overline{V}^{k+1}\!-\widehat{V}^{k+1}A^{k+1}\|_F\big\}\le\frac{\sqrt{\alpha}+4\overline{\beta}}{2\alpha}\big\|\overline{X}^{k+1}\!-\!\widehat{X}^{k+1}\big\|_F,\\
 \label{dist-VV2} 	
 &\max\big\{\|\overline{U}^{k+1}\!-\overline{U}^{k}B^{k}\|_F,\|\overline{V}^{k+1}\!-\overline{V}^{k}B^{k}\|_F\big\}\le\frac{\sqrt{\alpha}+4\overline{\beta}}{2\alpha}\big\|\overline{X}^{k+1}\!-\!\overline{X}^{k}\big\|_F,
 \end{align*}
 where $\alpha$ and $\widetilde{k}$ are the same as in Lemma \ref{rank-prop}  and $\overline{\beta}$ is the same as in Corollary \ref{corollary4.1} (iii).
 \end{lemma}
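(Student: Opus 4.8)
The plan is to reduce everything, via Lemma \ref{rank-prop}, to the leading rank-$\overline{r}$ blocks of the SVDs produced in steps 2 and 4, and then to read off each factor difference as the sum of a \emph{singular-value} part and a \emph{singular-subspace} part, the latter being controlled by the $\sin\Theta$ theorem. For $k\ge\widetilde{k}$, Lemma \ref{rank-prop} guarantees that $\widehat{X}^{k+1}$ and $\overline{X}^{k+1}$ have rank exactly $\overline{r}$ with $\sigma_{\overline{r}}\ge\alpha$, so I may write reduced SVDs $\widehat{X}^{k+1}=\widehat{P}_1^{k+1}(\widehat{D}_1^{k+1})^2(\widehat{R}_1^{k+1})^{\top}$ and $\overline{X}^{k+1}=\overline{R}_1^{k+1}(\overline{D}_1^{k+1})^2(\overline{P}_1^{k+1})^{\top}$, where the columns of $\widehat{P}_1^{k+1},\overline{R}_1^{k+1}$ (resp. $\widehat{R}_1^{k+1},\overline{P}_1^{k+1}$) are the leading left (resp. right) singular vectors. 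By construction $\widehat{D}_{ii}^{k+1}=\sqrt{\sigma_i(\widehat{X}^{k+1})}$, $\overline{D}_{ii}^{k+1}=\sqrt{\sigma_i(\overline{X}^{k+1})}$, and $\|\widehat{D}_1^{k+1}\|=\|\widehat{U}^{k+1}\|\le\overline{\beta}$. Using steps 2, 4 and relations \eqref{relation1-Uhat}-\eqref{relation1-Ubar}, the nonzero blocks of the four factors are $\widehat{U}^{k+1}=\widehat{P}_1^{k+1}\widehat{D}_1^{k+1}$, $\widehat{V}^{k+1}=\widehat{R}_1^{k+1}\widehat{D}_1^{k+1}$, $\overline{U}^{k+1}=\overline{R}_1^{k+1}\overline{D}_1^{k+1}$ and $\overline{V}^{k+1}=\overline{P}_1^{k+1}\overline{D}_1^{k+1}$ (padded by zero columns).

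The key structural fact, from the column-space equalities in Proposition \ref{prop-nzind} for $k\ge\overline{k}$, is that ${\rm col}(\overline{U}^{k+1})={\rm col}(\widehat{U}^{k+1})$, i.e. the leading left singular subspaces of $\overline{X}^{k+1}$ and $\widehat{X}^{k+1}$ coincide \emph{exactly}; hence there is $R_1^{k+1}\in\mathbb{O}^{\overline{r}}$ with $\overline{R}_1^{k+1}=\widehat{P}_1^{k+1}R_1^{k+1}$. Substituting the definition of $A^{k+1}$, the diagonal conjugation telescopes to $\widehat{U}^{k+1}A^{k+1}=\widehat{P}_1^{k+1}R_1^{k+1}\widehat{D}_1^{k+1}$ on the nonzero block, so $\overline{U}^{k+1}-\widehat{U}^{k+1}A^{k+1}=\widehat{P}_1^{k+1}R_1^{k+1}(\overline{D}_1^{k+1}-\widehat{D}_1^{k+1})$ and its Frobenius norm equals $\|\overline{D}_1^{k+1}-\widehat{D}_1^{k+1}\|_F$. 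Since $|\sqrt{a}-\sqrt{b}|\le|a-b|/(2\sqrt{\alpha})$ whenever $a,b\ge\alpha$, Mirsky's inequality for singular values yields $\|\overline{D}_1^{k+1}-\widehat{D}_1^{k+1}\|_F\le\frac{1}{2\sqrt{\alpha}}\|\overline{X}^{k+1}-\widehat{X}^{k+1}\|_F$, which already dominates $\frac{\sqrt{\alpha}+4\overline{\beta}}{2\alpha}$.

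For the $\widehat{V}$ comparison I would split $\overline{V}^{k+1}-\widehat{V}^{k+1}A^{k+1}=\overline{P}_1^{k+1}(\overline{D}_1^{k+1}-\widehat{D}_1^{k+1})+(\overline{P}_1^{k+1}-\widehat{R}_1^{k+1}R_1^{k+1})\widehat{D}_1^{k+1}$. The first summand is handled exactly as above; for the second I invoke the $\sin\Theta$ theorem \cite{Dopico00}: with the left subspaces aligned by $R_1^{k+1}$ at zero error, the coupled rotation forces $\|\overline{P}_1^{k+1}-\widehat{R}_1^{k+1}R_1^{k+1}\|_F\le\frac{2}{\alpha}\|\overline{X}^{k+1}-\widehat{X}^{k+1}\|_F$, the gap being $\sigma_{\overline{r}}\ge\alpha$ against $\sigma_{\overline{r}+1}=0$. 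Then $\|(\overline{P}_1^{k+1}-\widehat{R}_1^{k+1}R_1^{k+1})\widehat{D}_1^{k+1}\|_F\le\frac{2\overline{\beta}}{\alpha}\|\overline{X}^{k+1}-\widehat{X}^{k+1}\|_F$ via $\|\widehat{D}_1^{k+1}\|\le\overline{\beta}$, and summing the two pieces gives the constant $\frac{1}{2\sqrt{\alpha}}+\frac{2\overline{\beta}}{\alpha}=\frac{\sqrt{\alpha}+4\overline{\beta}}{2\alpha}$. The second pair of inequalities is entirely analogous, now comparing the reduced SVDs of $\overline{X}^{k+1}$ and $\overline{X}^{k}$: here the left subspaces need not coincide, so I apply the $\sin\Theta$ theorem to the leading left \emph{and} right singular subspaces simultaneously with one common rotation $R_2^k$, so that \emph{both} the $\overline{U}$- and the $\overline{V}$-difference acquire a subspace term of size $\frac{2\overline{\beta}}{\alpha}\|\overline{X}^{k+1}-\overline{X}^{k}\|_F$ (using $\|\overline{D}_1^{k}\|=\|\overline{V}^{k}\|\le\overline{\beta}$) on top of the $\frac{1}{2\sqrt{\alpha}}$ singular-value term.

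The main obstacle will be the clean bookkeeping needed to produce a \emph{single} orthogonal matrix $R_1^{k+1}$ (and $R_2^k$) aligning the left and right singular subspaces at once. The exact coincidence of left subspaces pins $R_1^{k+1}$ down, but when $\widehat{X}^{k+1}$ has repeated singular values the rotation is determined only blockwise, and I must choose it consistently across the eigenspaces — exactly as in the block-diagonal $\widetilde{Q}$ argument used in the proof of Theorem \ref{Sub-convergence} — so that the same $R_1^{k+1}$ realizes the zero left-space error and the $\sin\Theta$ right-space bound. Verifying that Dopico's theorem delivers such a common rotation with the gap $\alpha$, rather than two unrelated ones, is the technical crux; the scalar singular-value estimates, Mirsky's inequality, and the norm manipulations are then routine.
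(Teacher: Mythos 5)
Your skeleton is the right one, and your treatment of the \emph{second} pair of inequalities (comparing $\overline{X}^{k+1}$ with $\overline{X}^{k}$) coincides with the paper's proof: apply Dopico's Theorem 2.1 to get one common rotation $R_2^k$ controlling the left and right alignment errors simultaneously by $\frac{2}{\sigma_{\overline{r}}(\overline{X}^{k+1})}\|\overline{X}^{k+1}-\overline{X}^k\|_F$, split each factor difference into a singular-value part (bounded via Mirsky and $|\sqrt{a}-\sqrt{b}|\le|a-b|/(2\sqrt{\alpha})$) plus a subspace part scaled by $\overline{\beta}$, and add the constants. However, for the \emph{first} pair you deviate from the paper, and this is where the gap sits. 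You pin $R_1^{k+1}$ down by the exact coincidence ${\rm col}(\overline{U}^{k+1})={\rm col}(\widehat{U}^{k+1})$ (legitimate, since $\widetilde{k}\ge\overline{k}$ so Proposition \ref{prop-nzind} applies), which makes the left error exactly zero, and then you \emph{assert} that this same rotation satisfies $\|\overline{P}_1^{k+1}-\widehat{R}_1^{k+1}R_1^{k+1}\|_F\le\frac{2}{\alpha}\|\overline{X}^{k+1}-\widehat{X}^{k+1}\|_F$. That does not follow from Dopico's theorem: the theorem produces \emph{some} common rotation achieving the combined bound, not the particular rotation that zeroes the left error. You flag this as the "technical crux" but supply no argument for it, and the natural attempts fail. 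If you transfer Dopico's bound to your rotation by the triangle inequality (your $R_1$ and Dopico's rotation differ in Frobenius norm by at most the left-side Dopico error), you get $\frac{4}{\alpha}$ instead of $\frac{2}{\alpha}$, hence the overall constant $\frac{\sqrt{\alpha}+8\overline{\beta}}{2\alpha}$, which is weaker than the one in the statement. A direct proof is also not routine: writing $(\overline{P}-\widehat{Q}R_1)\overline{D}_1^2=E^{\top}\overline{R}+\widehat{Q}(\widehat{D}_1^2R_1-R_1\overline{D}_1^2)$ and then bounding the commutator term by $\|E\|_F+\|\widehat{D}_1\|^2\|\overline{P}-\widehat{Q}R_1\|_F$ gives a circular estimate, since $\|\widehat{D}_1\|^2\ge\alpha$ in general.

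The paper avoids all of this by never using the column-space coincidence: it applies Dopico's Theorem 2.1 directly to the pair $(\widehat{X}^{k+1},\overline{X}^{k+1})$ as well, obtaining a single $R_1^{k+1}$ with
\begin{equation*}
\sqrt{\|\widehat{P}_{\!J}^{k+1}R_1^{k+1}-\widetilde{R}_{\!J}^{k+1}\|_F^2+\|\widetilde{Q}_{\!J}^{k+1}R_1^{k+1}-\overline{P}_{\!J}^{k+1}\|_F^2}\le\frac{2}{\sigma_{\overline{r}}(\overline{X}^{k+1})}\|\overline{X}^{k+1}-\widehat{X}^{k+1}\|_F,
\end{equation*}
so that both the $\overline{U}$- and $\overline{V}$-comparisons for this pair are handled symmetrically, each picking up the same $\frac{1}{2\sqrt{\alpha}}+\frac{2\overline{\beta}}{\alpha}=\frac{\sqrt{\alpha}+4\overline{\beta}}{2\alpha}$ constant, exactly as in your second pair. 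Two ways to repair your write-up: either adopt this uniform use of Dopico for the first pair (simplest, and it is the paper's route), or keep your exact-alignment rotation but prove the lemma with the enlarged constant $\frac{\sqrt{\alpha}+8\overline{\beta}}{2\alpha}$ via the triangle-inequality transfer, which would still suffice for Proposition \ref{subdiff-gap} and Theorem \ref{gconverge} since only the existence of a fixed constant $c_s$ matters downstream; but as a proof of the lemma with its stated constant, your argument is incomplete.
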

 \begin{proof}
 Fix any $k\ge\widetilde{k}$. By steps 2 and 4 of Algorithm \ref{MPAMSC},
 $\widehat{X}^{k+1}=\widehat{P}^{k+1}(\widehat{D}^{k+1})^2(\widetilde{Q}^{k+1})^{\top}$ with $\widetilde{Q}^{k+1}=\overline{P}^{k}\widehat{Q}^{k+1}$ and $\overline{X}^{k+1}=\widetilde{R}^{k+1}(\overline{D}^{k+1})^2(\overline{P}^{k+1})^{\top}$ with $\widetilde{R}^{k+1}=\widehat{P}^{k+1}\overline{Q}^{k+1}$. 
 Let $J=[\overline{r}]$ and $\overline{J}=[r]\backslash J$. By Lemma \ref{rank-prop} (ii), $\widehat{D}_{ii}^{k+1}=\overline{D}_{ii}^{k+1}=0$ with $i\in\overline{J}$, so that
 \[
  \widehat{X}^{k+1}=\widehat{P}_{\!J}^{k+1}(\widehat{D}^{k+1}_1)^2(\widetilde{Q}_{\!J}^{k+1})^{\top}\ \
  {\rm and}\ \ \overline{X}^{k+1}=\widetilde{R}_{\!J}^{k+1}(\overline{D}_1^{k+1})^2(\overline{P}_{\!J}^{k+1})^{\top}.
 \]
Invoking \cite[Theorem 2.1]{Dopico00} with $(A,\widetilde{A})=((\widehat{X}^{k+1})^{\top},(\overline{X}^{k+1})^{\top})$ and $((\overline{X}^{k})^{\top}, (\overline{X}^{k+1})^{\top})$ respectively, there exist $R_1^{k+1}\in \mathbb{O}^{\overline{r}}$ and $R_2^{k}\in\mathbb{O}^{\overline{r}}$ such that
 \begin{align}\label{dist-X1}	
\!\!\sqrt{\|\widehat{P}_{\!J}^{k+1}R_1^{k+1}\!-\!\widetilde{R}_{\!J}^{k+1}\|_F^2\!+\!
  \|\widetilde{Q}_{{J}}^{k+1}R_1^{k+1}\!-\!\overline{P}_{\!J}^{k+1}\|_F^2}\le\frac{2}{\sigma_{\overline{r}}(\overline{X}^{k+1})}\big\|\overline{X}^{k+1}\!-\!\widehat{X}^{k+1}\big\|_F,\\
 \label{dist-X2} 	
 \sqrt{\|\widetilde{R}_{\!J}^{k}R_2^{k}-\widetilde{R}_{\!J}^{k+1}\|_F^2+
 \|\overline{P}_{\!J}^{k}R_2^{k}-\overline{P}_{\!J}^{k+1}\|_F^2}\le\frac{2}{\sigma_{\overline{r}}(\overline{X}^{k\!+\!1})}\big\|\overline{X}^{k}\!-\overline{X}^{k\!+\!1}\big\|_F.\qquad
 \end{align}
 From the expressions of $\overline{U}^{k+1}$ and $\widehat{U}^{k+1}$ (see steps 2 and 4 of Algorithm \ref{MPAMSC}) and that of $A^{k+1}$, we have
 \begin{align}\label{boundUhat}
 \big\|\overline{U}^{k+1}\!-\!\widehat{U}^{k+1}A^{k+1}\big\|_F
 &=\big\|\widetilde{R}_{\!J}^{k+1}\overline{D}_1^{k+1}\!-\!\widehat{P}_{\!J}^{k+1}R_1^{k+1}\widehat{D}_1^{k+1}\big\|_F\nonumber\\
 &\le\|\overline{D}_1^{k+1}-\widehat{D}_1^{k+1}\|_F+\|\widetilde{R}_{J}^{k+1}-\widehat{P}^{k+1}_{{J}}R_1^{k+1}\|_F\|\widehat{D}_1^{k+1}\|\nonumber\\
 &\stackrel{\eqref{dist-X1}}{\le}\sqrt{\textstyle{\sum_{i=1}^{\overline{r}}}\big[\sigma_i(\overline{X}^{k+1})^{1/2}-\sigma_i(\widehat{X}^{k+1})^{1/2}\big]^2}\nonumber\\
 &\quad\ +\frac{2\overline{\beta}}{\sigma_{\overline{r}}(\overline{X}^{k\!+\!1})}\|\overline{X}^{k+1}-\widehat{X}^{k+1}\|_F\nonumber\\
 &\le \frac{1}{2\sqrt{\alpha}}\|\overline{X}^{k+1}\!-\widehat{X}^{k+1}\|_F+\frac{2\overline{\beta}}{\alpha}\|\overline{X}^{k+1}-\widehat{X}^{k+1}\|_F\nonumber\\
 &=\frac{\sqrt{\alpha}+4\overline{\beta}}{2\alpha}\|\overline{X}^{k+1}-\widehat{X}^{k+1}\|_F.
\end{align}
Similarly, by the expressions of $\overline{V}^{k+1}$ and $\widehat{V}^{k+1}$ (see steps 2 and 4 of Algorithm \ref{MPAMSC}),
\begin{align}\label{boundVhat}
 \|\overline{V}^{k+1}\!-\!\widehat{V}^{k+1}A^{k+1}\|_F
 &=\big\|\overline{P}_{\!J}^{k+1}\overline{D}_{\!J}^{k+1}-\widetilde{Q}_{\!J}^{k+1}R_1^{k+1}\widehat{D}_1^{k+1}\big\|_F\nonumber\\
 &\le\|\overline{D}_1^{k+1}-\widehat{D}_1^{k+1}\|_F+\|\widehat{D}_1^{k+1}\|\|\overline{P}_{\!J}^{k+1}-\widetilde{Q}_{\!J}^{k+1}R_1^{k+1}\|_F\nonumber\\
 &\stackrel{\eqref{dist-X1}}{\le}\frac{\sqrt{\alpha}+4\overline{\beta}}{2\alpha}\|\overline{X}^{k+1}-\widehat{X}^{k+1}\|_F.
\end{align}
Inequalities \eqref{boundUhat}-\eqref{boundVhat} imply that the first inequality holds.
 Using inequality \eqref{dist-X2} and following the same arguments as those for \eqref{boundUhat}-\eqref{boundVhat} leads to the second one.
 \end{proof}
\begin{proposition}\label{subdiff-gap}
 Let $J=[\overline{r}]$ and $\overline{J}=[r]\backslash J$. Let $\vartheta_{\!J}$ and $\vartheta_{\!\overline{J}}$ be defined by \eqref{def-vthetaJ} with such $J$ and $\overline{J}$. Suppose that Assumptions \ref{ass1}-\ref{ass2} hold, and that there exists a constant $c>0$ such that for all $k\ge\widetilde{k}$, with $A_{1}^k\!=(\widehat{D}_1^k)^{-1}R_1^k\widehat{D}_1^k$ and $B_{1}^k\!=(\overline{D}_1^k)^{-1}R_2^k\overline{D}_1^k$ it holds 
 \begin{subnumcases}{}\label{ass3-Uk}
 \|\nabla\vartheta_{\!J}(\overline{U}_{\!J}^{k+1})-\nabla\vartheta_{\!J}(U_{\!J}^{k+1})B_1^k\|_F
  \le c\|\overline{U}_{\!J}^{k+1}-{U}_{\!J}^{k+1}B_1^k\|_F,\\
  \label{ass3-Vk}
 \|\nabla\vartheta_{\!J}(\overline{V}_{\!J}^{k+1})-\nabla\vartheta_{\!J}(V_{\!J}^{k+1})A_1^{k+1}\|_F
  \le c\|\overline{V}_{\!J}^{k+1}-{V}_{\!J}^{k+1}A_1^{k+1}\|_F
 \end{subnumcases}
 where $R_1^k$ and $R_2^k$ are the same as in Lemma \ref{lemma-UVk} and  $\widetilde{k}$ is the same as in Lemma \ref{rank-prop} (iii). Then, there exists $c_s>0$ such that for all $k\ge\widetilde{k}$, 
 \begin{align*}
 {\rm dist}\big(0,\partial \Phi_{\lambda,\mu}(\overline{U}^{k+1},\overline{V}^{k+1})\big)
 &\le c_{s}(\|\overline{X}^{k+1}\!-\!\overline{X}^{k}\|_F+\|\widehat{X}^{k+1}\!-\!\overline{X}^{k+1}\|_F)\nonumber\\
 &\ +c_{s}(\|U^{k+1}\!-\!\overline{U}^k\|_F+\|V^{k+1}\!-\!\widehat{V}^{k+1}\|_F).
\end{align*}
\end{proposition}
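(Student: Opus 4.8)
The plan is to exhibit one particular element of $\partial\Phi_{\lambda,\mu}(\overline{U}^{k+1},\overline{V}^{k+1})$ and bound its Frobenius norm, since ${\rm dist}(0,S)\le\|\xi\|_F$ for any $\xi\in S$. Fix $k\ge\widetilde{k}$ and set $J=[\overline{r}]$, $\overline{J}=[r]\setminus J$. By Proposition \ref{prop-nzind} and Lemma \ref{rank-prop}, the columns of $\overline{U}^{k+1}$ and $\overline{V}^{k+1}$ indexed by $J$ are nonzero while those indexed by $\overline{J}$ vanish, so by Lemma \ref{subdiff-vtheta} I may take the subgradient $\eta_U^{k+1}\in\partial\vartheta(\overline{U}^{k+1})$ equal to $\nabla\vartheta_J(\overline{U}_{\!J}^{k+1})$ on $J$ and to $0$ on $\overline{J}$ (legitimate because $0\in\Gamma=\bigcup_{y\in\partial\theta(0)}\partial(y\|\cdot\|)(0)$, as noted in the proof of Theorem \ref{Sub-convergence}), and likewise for $\eta_V^{k+1}$. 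Because $\overline{V}_{\overline{J}}^{k+1}=\overline{U}_{\overline{J}}^{k+1}=0$, the resulting subgradient has zero $\overline{J}$-blocks; hence it suffices to bound the two $J$-blocks
\[
G_U:=\nabla f(\overline{X}^{k+1})\overline{V}_{\!J}^{k+1}+\mu\overline{U}_{\!J}^{k+1}+\lambda\nabla\vartheta_J(\overline{U}_{\!J}^{k+1}),\quad G_V:=[\nabla f(\overline{X}^{k+1})]^{\top}\overline{U}_{\!J}^{k+1}+\mu\overline{V}_{\!J}^{k+1}+\lambda\nabla\vartheta_J(\overline{V}_{\!J}^{k+1}).
\]

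For $G_V$, I would start from the $V$-optimality identity of Remark \ref{remark-MPAMSC}(b), restricted to the columns in $J$ where $\partial\vartheta(V^{k+1})$ reduces to $\nabla\vartheta_J(V_{\!J}^{k+1})$, and right-multiply it by $A_1^{k+1}$; subtracting the result from $G_V$ splits $G_V$ into four groups: a smooth group $[\nabla f(\overline{X}^{k+1})]^{\top}\overline{U}_{\!J}^{k+1}-[\nabla f(\widehat{X}^{k+1})+L_f(\overline{X}^{k+1}-\widehat{X}^{k+1})]^{\top}\widehat{U}_{\!J}^{k+1}A_1^{k+1}$, the proximal-parameter group $\gamma_{2,k}(V_{\!J}^{k+1}-\widehat{V}_{\!J}^{k+1})A_1^{k+1}$, the $\mu$-group $\mu(\overline{V}_{\!J}^{k+1}-V_{\!J}^{k+1}A_1^{k+1})$, and the nonsmooth group $\lambda(\nabla\vartheta_J(\overline{V}_{\!J}^{k+1})-\nabla\vartheta_J(V_{\!J}^{k+1})A_1^{k+1})$. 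The nonsmooth group is controlled directly by hypothesis \eqref{ass3-Vk}; the proximal-parameter group by $\|V^{k+1}-\widehat{V}^{k+1}\|_F$ (with $\gamma_{2,k}\le\gamma_{2,0}$ and $\|A_1^{k+1}\|$ bounded); the $\mu$-group via $\|\overline{V}_{\!J}^{k+1}-V_{\!J}^{k+1}A_1^{k+1}\|_F\le\|\overline{V}_{\!J}^{k+1}-\widehat{V}_{\!J}^{k+1}A_1^{k+1}\|_F+\|A_1^{k+1}\|\,\|V^{k+1}-\widehat{V}^{k+1}\|_F$, the first summand equalling $\|\overline{V}^{k+1}-\widehat{V}^{k+1}A^{k+1}\|_F$ and hence bounded by $\|\overline{X}^{k+1}-\widehat{X}^{k+1}\|_F$ through Lemma \ref{lemma-UVk}. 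For the smooth group I would split off the $L_f$ piece and insert $\pm[\nabla f(\widehat{X}^{k+1})]^{\top}\overline{U}_{\!J}^{k+1}$, then use the Lipschitz continuity of $\nabla f$ together with $\|\overline{U}_{\!J}^{k+1}-\widehat{U}_{\!J}^{k+1}A_1^{k+1}\|_F=\|\overline{U}^{k+1}-\widehat{U}^{k+1}A^{k+1}\|_F$ from Lemma \ref{lemma-UVk}, all prefactors $\|\overline{U}^{k+1}\|$, $\|\widehat{U}^{k+1}\|$, $\|\nabla f(\widehat{X}^{k+1})\|$ being bounded by Corollary \ref{corollary4.1}(ii). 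This bounds $\|G_V\|_F$ by a constant times $\|\widehat{X}^{k+1}-\overline{X}^{k+1}\|_F+\|V^{k+1}-\widehat{V}^{k+1}\|_F$.

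The bound on $G_U$ is entirely analogous, now starting from the $U$-optimality identity of Remark \ref{remark-MPAMSC}(b) restricted to $J$ and right-multiplied by $B_1^k$. The only new wrinkle is that this identity lives at the previous index: it involves $\overline{V}_{\!J}^k$, $\overline{X}^k$, $\widehat{X}^{k+1}$, $\overline{U}_{\!J}^k$ rather than the $(k+1)$-quantities. I would therefore use the \emph{second} inequality of Lemma \ref{lemma-UVk} (with bound $\|\overline{X}^{k+1}-\overline{X}^{k}\|_F$) in place of the first, insert $\pm\nabla f(\overline{X}^{k})\overline{V}_{\!J}^{k+1}$ in the smooth group, and introduce triangle-inequality crossterms to bring in $\|U^{k+1}-\overline{U}^k\|_F$ (for $\overline{U}_{\!J}^k$ versus $U_{\!J}^{k+1}$ and for the proximal-parameter group $\gamma_{1,k}(U_{\!J}^{k+1}-\overline{U}_{\!J}^k)B_1^k$) and $\|\widehat{X}^{k+1}-\overline{X}^k\|_F\le\|\widehat{X}^{k+1}-\overline{X}^{k+1}\|_F+\|\overline{X}^{k+1}-\overline{X}^k\|_F$ (for the $L_f(\widehat{X}^{k+1}-\overline{X}^k)$ term); the nonsmooth group is handled by hypothesis \eqref{ass3-Uk}. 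Adding the two estimates and taking $c_s$ to be the largest of the accumulated constants yields the claim.

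The main obstacle is the smooth groups in both estimates: there the optimality relations are anchored at intermediate iterates ($\overline{X}^k$, $\widehat{X}^{k+1}$ with factors $\overline{V}^k$, $\widehat{U}^{k+1}$) and must be transported to the target $\overline{X}^{k+1}$ with factors $\overline{U}^{k+1}$, $\overline{V}^{k+1}$. Making this transport quantitative is exactly what forces the combined use of the $\sin\Theta$-type proximity bounds of Lemma \ref{lemma-UVk}, the Lipschitz continuity of $\nabla f$, and the uniform boundedness of the iterates and of the rotation-type matrices $A_1^{k+1}, B_1^k$ --- the latter following because their singular-value profiles are governed by $\widehat{D}_1^{k+1}$ (resp. $\overline{D}_1^{k}$), whose diagonal entries lie between $\sqrt{\alpha}$ and a bound on $\|\widehat{X}^{k+1}\|^{1/2}$ (resp. $\|\overline{X}^{k}\|^{1/2}$) by Lemma \ref{rank-prop} and Corollary \ref{corollary4.1}(ii), so that $\|A_1^{k+1}\|,\|B_1^k\|$ are bounded by a uniform constant. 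Keeping the right-multiplications by $A_1^{k+1}$ and $B_1^k$ aligned across the smooth, proximal, $\mu$- and nonsmooth groups --- so that the $\nabla\vartheta_J$ differences match the precise form required by \eqref{ass3-Uk}--\eqref{ass3-Vk} --- is the bookkeeping crux.
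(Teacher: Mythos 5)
Your proposal is correct and follows essentially the same route as the paper's own proof: you exhibit the subgradient of $\Phi_{\lambda,\mu}$ at $(\overline{U}^{k+1},\overline{V}^{k+1})$ whose $\overline{J}$-blocks vanish, reduce the distance estimate to bounding the two $J$-blocks (the paper's $S^{k+1}$ and $T^{k+1}$), and control them by subtracting the step-1 and step-3 optimality identities right-multiplied by $B_1^k$ and $A_1^{k+1}$, then invoking Lemma \ref{lemma-UVk}, the Lipschitz continuity of $\nabla\!f$, the uniform bounds $\|A_1^{k+1}\|,\|B_1^k\|\le\overline{\beta}/\sqrt{\alpha}$, and hypotheses \eqref{ass3-Uk}--\eqref{ass3-Vk}, exactly as the paper does. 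The only cosmetic difference is that you convert $\|\widehat{X}^{k+1}-\overline{X}^{k}\|_F$ into $\|\widehat{X}^{k+1}-\overline{X}^{k+1}\|_F+\|\overline{X}^{k+1}-\overline{X}^{k}\|_F$ explicitly, which the paper absorbs into the constant $c_s$.
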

\begin{proof}
 Fix any $k\ge\widetilde{k}$. From the inclusions in Remark \ref{remark-MPAMSC} (b) and Lemma \ref{subdiff-vtheta}, it holds that
 \begin{subnumcases}{}
 0\in\big[\nabla\!f(\overline{X}^{k})\!+\!L_{\!f}(\widehat{X}^{k+1}\!-\!\overline{X}^{k})\big]
  \overline{V}^{k}+\mu U^{k+1}+\gamma_{1,k}(U^{k+1}\!-\!\overline{U}^k)\nonumber\\
  \quad\ \ +\lambda\big[\{\nabla\vartheta_{J}(U_{\!J}^{k+1})\}\times\partial\vartheta_{\overline{J}}(U_{\overline{J}}^{k+1})\big],\nonumber\\
 0\in\big[\nabla\!f(\widehat{X}^{k+1})+L_{\!f}(\overline{X}^{k+1}\!-\!\widehat{X}^{k+1})\big]^{\top}\widehat{U}^{k+1}+\mu {V}^{k+1}+\gamma_{2,k}({V}^{k+1}\!-\!\widehat{V}^{k+1})\nonumber\\
 \quad\ \ +\lambda\big[\{\nabla\vartheta_{J}(V_{\!J}^{k+1})\}\times\partial\vartheta_{\overline{J}}(V_{\!\overline{J}}^{k+1})\big].\nonumber
 \end{subnumcases}
 By Lemma \ref{rank-prop} (i), $\overline{U}^k_{\overline{J}}={U}^k_{\overline{J}}=0$ and $\overline{V}^k_{\overline{J}}={V}^k_{\overline{J}}=0$.
 Along with the above two inclusions, we have
 $0\in\partial\vartheta_{\overline{J}}(U_{\!\overline{J}}^{k+1}),\, 0\in\partial\vartheta_{\overline{J}}(V_{\!\overline{J}}^{k+1})$ and the following two equations
 \begin{align*}
   0\!=\!\big[\nabla\!f(\overline{X}^{k})\!+\!L_{\!f}(\widehat{X}^{k+1}-\!\overline{X}^{k})\big]
  \overline{V}_{\!J}^{k}+\mu U_{\!J}^{k+1}+\gamma_{1,k}(U_{\!J}^{k+1}\!-\!\overline{U}_{\!J}^k)+\lambda\nabla\vartheta_{J}(U_{\!J}^{k+1}),\qquad\nonumber\\
 0\!=\!\big[\nabla\!f(\widehat{X}^{k+1})\!\!+\!L_{\!f}(\overline{X}^{k+1}\!\!\!-\!\widehat{X}^{k+1})\big]^{\top}\widehat{U}_{\!J}^{k+1}\!+\!\mu {V}_{\!J}^{k+1}\!\!+\!\gamma_{2,k}({V}_{\!J}^{k+1}\!\!-\!\widehat{V}_{\!J}^{k+1})\!+\!\lambda\nabla\vartheta_{J}(V_{\!J}^{k+1}).\nonumber
 \end{align*}
 Multiplying the first equality by $B_1^k$ and the second one by $A_1^{k+1}$ leads to
 \begin{subnumcases}{}\label{temp-subdiffU}
 0=\big[\nabla f(\overline{X}^{k})+L_{\!f}(\widehat{X}^{k+1}\!-\!\overline{X}^{k})\big]
  \overline{V}_{\!J}^{k}B_1^k+\mu U_{\!J}^{k+1}B_1^k\nonumber\\
  \qquad+\gamma_{1,k}(U_{\!J}^{k+1}-\overline{U}_{\!J}^k)B_1^k +\lambda\nabla\vartheta_{\!J}(U_{\!J}^{k+1})B_1^k,\\
  \label{temp-subdiffV}
 0=\big[\nabla f(\widehat{X}^{k+1})\!\!+\!L_{\!f}(\overline{X}^{k+1}\!\!\!-\!\widehat{X}^{k+1})\big]^{\top}\widehat{U}_{\!J}^{k+1}A_1^{k+1}\!\!+\!\mu {V}_{\!J}^{k+1}A_1^{k+1} \nonumber\\
 \qquad+\gamma_{2,k}({V}_{\!J}^{k+1}\!\!-\!\widehat{V}_{\!J}^{k+1})A_1^{k+1}+\lambda\nabla\vartheta_{\!J}(V_{\!J}^{k+1})A_1^{k+1}.
\end{subnumcases}
 Let $S^{k+1}\!:=\lambda\nabla\vartheta_{J}(\overline{U}_{\!J}^{k+1})+\!\mu\overline{U}_{\!J}^{k+1}+\nabla\!f(\overline{X}^{k+1})\overline{V}_{\!J}^{k+1}$ and $T^{k+1}\!:=\lambda\nabla\vartheta_{J}(\overline{V}_{\!J}^{k+1})+\mu\overline{V}_{\!J}^{k+1}+\big[\nabla\! f(\overline{X}^{k+1})\big]^{\top}\overline{U}_{\!J}^{k+1}$. 
 Along with $0\in\partial\vartheta_{\overline{J}}(U_{\!\overline{J}}^{k+1}),0\in\partial\vartheta_{\overline{J}}(V_{\!\overline{J}}^{k+1})$ and Definition \ref{def-spoint}, we have 
 $(S^{k+1},T^{k+1})\in\partial \Phi_{\lambda,\mu}(\overline{U}^{k+1},\overline{V}^{k+1})$, and consequently, 
 \begin{equation}\label{temp-ineq46}
  {\rm dist}\big(0,\partial \Phi_{\lambda,\mu}(\overline{U}^{k+1},\overline{V}^{k+1})\big)\leq\|S^{k+1}\|_F+\|T^{k+1}\|_F.
 \end{equation}
 By the above \eqref{temp-subdiffU}-\eqref{temp-subdiffV}, the matrices $S^{k+1}$ and $T^{k+1}$ are equivalently written as
 \begin{align*}
  S^{k+1}&=\nabla\!f(\overline{X}^{k+1})\overline{V}_{\!J}^{k+1}-\big[\nabla\! f(\overline{X}^{k})+L_{\!f}(\widehat{X}^{k+1}\!-\!\overline{X}^{k})\big] \overline{V}_{\!J}^{k}B_1^k\!+\lambda\nabla\vartheta_{\!J}(\overline{U}_{{J}}^{k+1})\nonumber\\
  &\quad\ -\lambda\nabla\vartheta_{\!J}({U}_{\!J}^{k+1})B_1^k+\mu(\overline{U}_{\!J}^{k+1}- U_{\!J}^{k+1}B_1^k)-\gamma_{1,k}(U_{\!J}^{k+1}-\overline{U}_{\!J}^k)B_1^k,\nonumber\\
 T^{k+1}&=\!\nabla\!f(\overline{X}^{k+1})^{\top}\overline{U}_{\!J}^{k+1}-\big[\nabla\! f(\widehat{X}^{k+1})\!+\!L_f(\overline{X}^{k+1}-\!\widehat{X}^{k+1})\big]^{\top}\widehat{U}_{\!J}^{k+1}A_1^{k+1}\!+\!\lambda\nabla\vartheta_{\!J}(\overline{V}_{\!J}^{k+1})\nonumber\\
 &\quad\ -\lambda\nabla\vartheta_{\!J}({V}_{\!J}^{k+1})A_1^{k+1}+\mu (\overline{V}_{\!J}^{k+1}-V_{\!J}^{k+1}A_1^{k+1})
 -\gamma_{2,k}({V}_{\!J}^{k+1}-\widehat{V}_{\!J}^{k+1})A_1^{k+1}.\nonumber
 \end{align*}
 Note that the matrices $A^k$ and $B^k$ appearing in Lemma \ref{lemma-UVk} satisfy $A^k={\rm BlkDiag}(A_1^k,0)$ and $B^k={\rm BlkDiag}(B_1^k,0)$. Combining the above two equations with $\overline{U}^k_{\overline{J}}={U}^k_{\overline{J}}=0$ and $\overline{V}^k_{\overline{J}}={V}^k_{\overline{J}}=0$ and using $\max(\|A^{k+1}\|,\|B^k\|)\le{\overline{\beta}}/{\sqrt{\alpha}}$, we obtain 
 \begin{align}\label{bound-S}
  \|S^{k+1}\!\|_F
 &\le\|\nabla\!f(\overline{X}^{k+1}\!)\overline{V}^{k+1}\!-\!\nabla\! f(\overline{X}^{k})\overline{V}^{k}B^k\|_F\!+\!L_{\!f}\overline{\beta}\|\widehat{X}^{k+1}\!\!-\!\overline{X}^{k}\!\|_F\nonumber\\
 &\quad+\!\mu\|\overline{U}^{k+1}\!-\!U^{k+1}B^k\|_F +\gamma_{1,k}(\overline{\beta}/{\sqrt{\alpha}})\|U^{k+1}\!-\!\overline{U}^k\|_F\nonumber\\
 &\quad +\lambda\|\nabla\vartheta_{\!J}(U_{\!J}^{k+1})B_1^k-\nabla\vartheta_{\!J}(\overline{U}_{\!J}^{k+1})\|_F,\\
  \label{bound-Gam}
 \|T^{k+1}\|_F
&\le\|\nabla\!f(\overline{X}^{k+1})\overline{U}^{k+1}\!-\!\nabla \!f(\widehat{X}^{k+1})\widehat{V}^{k+1}A^{k+1}\|_F+L_{\!f}\overline{\beta}\|\widehat{X}^{k+1}\!-\!\overline{X}^{k+1}\|_F\nonumber\\
  &\quad\ +\mu\|\overline{V}^{k+1}\!-V^{k+1}A^{k+1}\|_F +\gamma_{2,k}(\overline{\beta}/{\sqrt{\alpha}})\|V^{k+1}\!-\widehat{V}^{k+1}\|_F\nonumber\\
  &\quad\ +\lambda\|\nabla\vartheta_{\!J}(V_{\!J}^{k+1})A_1^{k+1}-\nabla\vartheta_{\!J}(\overline{V}_{\!J}^{k+1})\|_F.
 \end{align}
 Recall that $\nabla\!f$ is Lipschitz continuous with modulus $L_{\!f}$. It is easy to obtain that 
 \begin{align*}
  &\|\nabla\!f(\overline{X}^{k+1})\overline{V}^{k+1}-\nabla\!f(\overline{X}^{k})\overline{V}^{k}B^k\|_F\\
  &\le\|\nabla\!f(\overline{X}^{k+1})-\nabla\! f(\overline{X}^{k})\|_F\|\overline{V}^{k+1}\|+\|\nabla \!f(\overline{X}^{k})\|\|\overline{V}^{k+1}- \overline{V}^{k}B^k\|_F\nonumber\\
  &\le L_f\overline{\beta}\|\overline{X}^{k+1}-\overline{X}^{k}\|_F+\frac{c_f(\sqrt{\alpha}\!+\!4\overline{\beta})}{2\alpha}\|\overline{X}^{k+1}\!-\!\overline{X}^{k}\|_F
 \end{align*}
 with $c_{f}\!:=\sup_{k\in\mathbb{N}}\{\|\nabla\!f(\overline{X}^k)\|,\|\nabla\!f(\widehat{X}^k)\|\}$, where the second inequality is by Lemma \ref{lemma-UVk}. For the term $\|\overline{U}^{k+1}\!-U^{k+1}B^k\|_F$ in inequality \eqref{bound-S}, it holds 
 \begin{align*}
  \|\overline{U}^{k+1}\!-U^{k+1}B^k\|_F
  &\le\|\overline{U}^{k+1}\!-\overline{U}^{k}B^k\|_F+\| \overline{U}^{k}B^k\!-U^{k+1}B^k\|_F\\
  &\le\frac{\sqrt{\alpha}\!+\!4\overline{\beta}}{2\alpha}\|\overline{X}^{k+1}\!-\!\overline{X}^{k}\|_F+\frac{\overline{\beta}}{\sqrt{\alpha}}\|\overline{U}^{k}\!-\!U^{k+1}\|_F.
\end{align*}
Combining the above two inequalities with \eqref{bound-S} and the given \eqref{ass3-Uk} leads to
\begin{align}\label{bound1-S}
 \|S^{k+1}\|_F&\le\big[L_f\overline{\beta}+0.5\alpha^{-1}(c_f+\mu+\!\lambda c)(\sqrt{\alpha}\!+\!4\overline{\beta})\big]\|\overline{X}^{k+1}-\overline{X}^{k}\|_F\nonumber\\
 &\quad+(\mu\!+\!\lambda c+\gamma_{1,k})({\overline{\beta}}/{\sqrt{\alpha}})\|U^{k+1}-\overline{U}^k\|_F+L_{\!f}\overline{\beta}\|\widehat{X}^{k+1}\!\!-\!\overline{X}^{k}\!\|_F.
\end{align}
Using inequality \eqref{bound-Gam} and following the same arguments as those for \eqref{bound1-S} yields
\begin{align*}
 \|T^{k+1}\|_F&\le\big[2L_f\overline{\beta}+0.5\alpha^{-1}({c}_f\!+\!\mu+\lambda c)(\sqrt{\alpha}+4\overline{\beta})\big]\|\overline{X}^{k+1}-\!\widehat{X}^{k+1}\|_F\\
 &\quad+(\mu+\lambda c+\gamma_{1,k})({\overline{\beta}}/{\sqrt{\alpha}})\|V^{k+1}-\widehat{V}^{k+1}\|_F.
\end{align*}
 The desired result follows by combining the above two inequalities with \eqref{temp-ineq46}.
\end{proof}
\begin{remark}
 When $\theta=\theta_1$, since $\nabla\vartheta_{\!J}(\overline{U}_{\!J}^{k+1})=0$
 and $\nabla\vartheta_{\!J}(U_{\!J}^{k+1})=0$, inequalities \eqref{ass3-Uk}-\eqref{ass3-Vk} automatically hold for all $k\ge\widetilde{k}$, so the conclusion of Proposition \ref{subdiff-gap} holds for $\theta=\theta_1$ under Assumptions \ref{ass1}-\ref{ass2}. When $\theta\ne\theta_1$ but $\theta'$ is locally Lipschitz on $\mathbb{R}_{++}$, by Proposition \ref{prop-sdiffgap}, the conclusion of Proposition \ref{subdiff-gap} also holds under Assumptions \ref{ass1}-\ref{ass2} if in addition there exists some $W\in\mathcal{W}^*$ such that the nonzero singular values of $\overline{X}$ are distinct each other and  $\Phi_{\lambda,\mu}$ has the KL property at $(\overline{U},\overline{V})$.
\end{remark}

 Now we are ready to prove the convergence of the iterate sequence $\{(\overline{X}^k,\widehat{X}^k)\}_{k\in\mathbb{N}}$ and column subspace sequences $\{{\rm col}(\widehat{U}^k),{\rm col}(\widehat{V}^k)\}$ and $\{{\rm col}(\overline{U}^k),{\rm col}(\overline{V}^k)\}$.
 \begin{theorem}\label{gconverge}
  Suppose that $\Phi_{\lambda,\mu}$ is a KL function, that Assumptions \ref{ass1}-\ref{ass2} hold, and that Proposition \ref{subdiff-gap} holds. Then, the sequences $\{\overline{X}^k\}_{k\in\mathbb{N}}$ and $\{\widehat{X}^k\}_{k\in\mathbb{N}}$ converge to the same point, say $X^*$, and $\big(P_1^*(\Sigma_{{r}}(X^*))^{\frac{1}{2}},Q_1^*(\Sigma_{{r}}(X^*))^{\frac{1}{2}}\big)$ is a stationary point of problem \eqref{prob}, where $P_1^*$ and $Q_1^*$ are the matrices obtained by deleting the last $n-r$ and $m-r$ columns of $P^*$ and $Q^*$, respectively, with $(P^*,Q^*)\in\mathbb{O}^{n,m}(X^*)$, and furthermore, 
  \begin{subnumcases}{}\label{aim-subspaceU}
  \lim_{ k\to\infty}{\rm col}(U^{k})=\lim_{ k\to\infty}{\rm col}(\overline{U}^{k})=\lim_{ k\to\infty}{\rm col}(\widehat{U}^{k})={\rm col}(X^*);\\
 \lim_{k\to\infty}{\rm col}(V^{k})=\lim_{ k\to\infty}{\rm col}(\overline{V}^{k})=\lim_{ k\to\infty}{\rm col}(\widehat{V}^{k})={\rm row}(X^*).
 \label{aim-subspaceV}
 \end{subnumcases} 
 where the set convergence is in the sense of Painlev$\acute{e}$-Kuratowski convergence.  
 \end{theorem}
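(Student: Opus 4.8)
The plan is to run the standard Attouch--Bolte--Svaiter Kurdyka--Lojasiewicz (KL) finite-length argument on the value sequence $\Phi_k:=\Phi_{\lambda,\mu}(\overline{U}^k,\overline{V}^k)$, using the two estimates already in hand. First I would introduce the aggregate step quantity
\[
 \Delta_k:=\|U^{k+1}\!-\!\overline{U}^k\|_F+\|V^{k+1}\!-\!\widehat{V}^{k+1}\|_F+\|\widehat{X}^{k+1}\!-\!\overline{X}^k\|_F+\|\overline{X}^{k+1}\!-\!\widehat{X}^{k+1}\|_F+\|\overline{X}^{k+1}\!-\!\overline{X}^k\|_F.
\]
Corollary \ref{corollary4.1}(iii) then yields a sufficient-decrease inequality $\Phi_k-\Phi_{k+1}\ge a\,\Delta_k^2$ for a constant $a>0$ (absorbing $\underline{\gamma},\overline{\beta}$ and the elementary bound $(\sum_i t_i)^2\le 5\sum_i t_i^2$), while Proposition \ref{subdiff-gap} gives a relative-error inequality ${\rm dist}(0,\partial\Phi_{\lambda,\mu}(\overline{U}^{k+1},\overline{V}^{k+1}))\le b\,\Delta_k$ with $b=c_s$, since the four terms on its right are among the five summands of $\Delta_k$. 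The crux is that one and the same $\Delta_k$ simultaneously lower-bounds the value gap (squared) and upper-bounds the subgradient (linearly), which is exactly what the KL machinery requires.

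Next I would invoke the uniformized KL property (\cite[Lemma 6]{Bolte14}; see also \cite{Attouch10}) on the accumulation set $\Omega$ of $\{(\overline{U}^k,\overline{V}^k)\}$. By Theorem \ref{Sub-convergence}, $\Omega$ is nonempty, compact, consists of stationary points, and $\Phi_{\lambda,\mu}\equiv\varpi^*$ on it, while $\Phi_k\downarrow\varpi^*$ by Theorem \ref{converge-obj}. If $\Phi_k=\varpi^*$ for some finite $k$, the decrease inequality forces $\Delta_k=0$ and the sequence terminates, so I may assume $\Phi_k>\varpi^*$ throughout. For $k$ large the iterate lies in the uniform neighborhood with $\varpi^*<\Phi_k<\varpi^*+\eta$, so $\varphi'(\Phi_k-\varpi^*)\,{\rm dist}(0,\partial\Phi_k)\ge 1$. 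Combining the concavity inequality $\varphi(\Phi_k-\varpi^*)-\varphi(\Phi_{k+1}-\varpi^*)\ge\varphi'(\Phi_k-\varpi^*)(\Phi_k-\Phi_{k+1})$ with both estimates and the index-shifted bound ${\rm dist}(0,\partial\Phi_k)\le b\,\Delta_{k-1}$ gives $\Delta_k^2\le(b/a)\Delta_{k-1}[\varphi(\Phi_k-\varpi^*)-\varphi(\Phi_{k+1}-\varpi^*)]$; the arithmetic-geometric mean inequality then produces $\Delta_k\le\tfrac12\Delta_{k-1}+\tfrac{b}{2a}[\varphi(\Phi_k-\varpi^*)-\varphi(\Phi_{k+1}-\varpi^*)]$, and summation telescopes to $\sum_k\Delta_k<\infty$. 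In particular $\sum_k\|\overline{X}^{k+1}-\overline{X}^k\|_F<\infty$, so $\{\overline{X}^k\}$ is Cauchy and converges to some $X^*$, and $\|\widehat{X}^{k+1}-\overline{X}^{k+1}\|_F\le\Delta_k\to 0$ forces $\widehat{X}^k\to X^*$.

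To identify $X^*$, I would feed its convergence back into Theorem \ref{Sub-convergence}: any accumulation point of $\{W^k\}$ now has $\overline{X}=\widehat{X}=X^*$, and by parts (ii)--(iii) the pair $(\overline{R}[\Sigma_r(X^*)]^{1/2},\overline{Q}[\Sigma_r(X^*)]^{1/2})$ with $X^*=\overline{R}\Sigma_r(X^*)\overline{Q}^{\top}$ an SVD is a stationary point of \eqref{prob}; choosing $(P^*,Q^*)\in\mathbb{O}^{n,m}(X^*)$ and taking the first $r$ columns $P_1^*,Q_1^*$ yields the claimed pair (the columns beyond $\overline{r}$ being zero). For the subspaces I would use the uniform spectral gap from Lemma \ref{rank-prop}: for large $k$, ${\rm rank}(\overline{X}^k)={\rm rank}(\widehat{X}^k)=\overline{r}$ with $\sigma_{\overline{r}}(\overline{X}^k),\sigma_{\overline{r}}(\widehat{X}^k)\ge\alpha>0$ and $\sigma_{\overline{r}+1}=0$, whence ${\rm rank}(X^*)=\overline{r}$ and $X^*$ inherits the gap. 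By Propositions \ref{prop-colspace}--\ref{prop-nzind}, for large $k$ one has ${\rm col}(U^k)={\rm col}(\widehat{U}^k)={\rm col}(\overline{U}^k)={\rm col}(\overline{X}^k)$ and the $V$-side identities equal ${\rm row}(\overline{X}^k)$, with $\widehat{U}^k,\widehat{V}^k$ spanning ${\rm col}(\widehat{X}^k),{\rm row}(\widehat{X}^k)$. Applying the $\sin\Theta$ theorem \cite{Dopico00} to $\overline{X}^k\to X^*$ and $\widehat{X}^k\to X^*$, the orthogonal projections onto these top-$\overline{r}$ singular subspaces converge to those of $X^*$, which is precisely the Painlev\'{e}--Kuratowski convergence \eqref{aim-subspaceU}--\eqref{aim-subspaceV}.

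The main obstacle I anticipate is the bookkeeping in the first two paragraphs: one must verify that the \emph{same} $\Delta_k$ controls both inequalities with the correct index shift ${\rm dist}(0,\partial\Phi_k)\le b\,\Delta_{k-1}$, and confirm that the uniform KL neighborhood is genuinely entered — here boundedness (Corollary \ref{corollary4.1}(ii)) together with the fact that every accumulation point lies in $\Omega$ rules out the sequence escaping the neighborhood. The spectral-gap/$\sin\Theta$ step is comparatively routine once Lemma \ref{rank-prop} supplies the uniform gap $\alpha$.
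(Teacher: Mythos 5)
Your proposal is correct, and for the heart of the theorem it coincides with the paper's own proof: the paper disposes of the first two claims in one sentence by citing the argument of \cite[Theorem 1]{Bolte14}, and what you spell out --- sufficient decrease from Corollary \ref{corollary4.1}(iii), the relative-error bound ${\rm dist}(0,\partial\Phi_{\lambda,\mu}(\overline{U}^{k+1},\overline{V}^{k+1}))\le c_s\Delta_k$ from Proposition \ref{subdiff-gap} with the correct index shift, the uniformized KL inequality on the compact accumulation set (where $\Phi_{\lambda,\mu}\equiv\varpi^*$ by Theorem \ref{Sub-convergence}(iii)), then AM--GM and telescoping to get $\sum_k\Delta_k<\infty$ --- is exactly that argument, correctly adapted to the fact that the decrease is measured in $\Delta_k$ rather than in $\|(\overline{U}^{k+1},\overline{V}^{k+1})-(\overline{U}^k,\overline{V}^k)\|_F$, which is why one obtains convergence of $\{\overline{X}^k\}$ and $\{\widehat{X}^k\}$ but not of the factor pairs, just as the theorem asserts. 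Where you genuinely diverge is the subspace claim. The paper, after the same reduction via Proposition \ref{prop-nzind}, passes to the Gram matrices $\widehat{X}^k(\widehat{X}^k)^{\top}\to X^*(X^*)^{\top}$, invokes the eigenvector error bound of \cite[Lemma 3]{Chen03}, and verifies the two Painlev\'e--Kuratowski inclusions by hand through the characterization in \cite[Exercise 4.2]{RW98}; you instead apply the $\sin\Theta$ theorem of \cite{Dopico00} directly to the rectangular matrices $\overline{X}^k,\widehat{X}^k\to X^*$, using the uniform gap $\sigma_{\overline{r}}\ge\alpha>0$, $\sigma_{\overline{r}+1}=0$ supplied by Lemma \ref{rank-prop}, and conclude via convergence of the orthogonal projections. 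Both routes are sound and rest on the same gap; yours is shorter and avoids symmetrization, but in a final write-up you should make explicit the (true, two-line) implication that projection convergence yields Painlev\'e--Kuratowski convergence: writing $\Pi_k$ for the projection onto ${\rm col}(\widehat{U}^k)$ and $\Pi^*$ for that onto ${\rm col}(X^*)$, one has ${\rm dist}(u,{\rm col}(\widehat{U}^k))=\|u-\Pi_k u\|$, and $\|\Pi_k-\Pi^*\|\to 0$ gives both inclusions --- this is precisely what the paper's inclusion arguments accomplish. One last shared looseness: both you and the paper obtain stationarity for the particular SVD factor pair arising as a limit, and passing to an arbitrary $(P^*,Q^*)\in\mathbb{O}^{n,m}(X^*)$ requires the block-rotation invariance already exercised inside the proof of Theorem \ref{Sub-convergence}(iii).
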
 
\begin{proof}
Combining Corollary \ref{corollary4.1} with Proposition \ref{subdiff-gap} and Theorem \ref{Sub-convergence} and following the same arguments as those for \cite[Theorem 1]{Bolte14} yields the first two parts. For the last part, by Proposition \ref{prop-nzind},
 we only need to prove $\lim_{ k\to\infty}{\rm col}(\widehat{U}^{k})={\rm col}(X^*)$, which by \cite[Exercise 4.2]{RW98} is equivalent to proving that 
 \begin{equation}\label{aim-inclusion}
  \mathcal{L}:=\big\{u\in\mathbb{R}^{\overline{r}}\ |\ \lim_{k\to\infty}{\rm dist}(u,{\rm col}(\widehat{U}^k))=0\big\}={\rm col}(X^*).
 \end{equation}
 
 Let $\widetilde{P}^*\Sigma_{\overline{r}}(X^*)(\widetilde{Q}^*)^{\top}$ be the skinny SVD of $X^*$ with $\widetilde{P}^*\!\in\mathbb{O}^{n\times \overline{r}}$ and $\widetilde{Q}^*\!\in \mathbb{O}^{m\times \overline{r}}$. We first argue that ${\rm col}(\widehat{P}_{\!J})= {\rm col}(\widetilde{P}^*)$. By Lemma \ref{rank-prop}, for each $k\ge\widetilde{k}$, $\widehat{D}_{jj}^{k}=\overline{D}_{jj}^{k}=0$ for $j=\overline{r}\!+\!1,\ldots,r$. By step 2 of Algorithm \ref{MPAMSC}, for each $k\ge\widetilde{k}$, $\widehat{U}^k=[\widehat{P}_J^{k}\widehat{D}_1^{k}\ \ 0]$ with $J=[\overline{r}]$ and $\widehat{D}_1^{k}={\rm Diag}(\widehat{D}_{11}^k,\ldots,\widehat{D}_{\overline{r}\overline{r}}^k)$. This means that ${\rm col}(\widehat{U}^{k})={\rm col}(\widehat{P}_{\!J}^{k})$ for all $k\ge\widetilde{k}$. 
 Clearly, ${\rm col}(X^*)={\rm col}(\widetilde{P}^*)$.
 For each $k\ge\widetilde{k}$, let $\widehat{P}_2^k\in\mathbb{O}^{n\times(n-\overline{r})}$ be such that $[\widehat{P}_{\!J}^k\ \ \widehat{P}_2^k]\in\mathbb{O}^n$. Let $Z^*=X^*{X^*}^{\top}$ and $\mathbb{O}^n(Z^*)=\big\{P\in\mathbb{O}^n\,|\, Z^*=P^*{\rm Diag}(\lambda(Z^*))P^{\top}\big\}$, where $\lambda(Z^*)$ is the eigenvalue vector of $Z^*$ arranged in a nonincreasing order. By \cite[Lemma 3]{Chen03}, there exists $\eta>0$ such that for all sufficiently large $k$,
 \begin{align*}
  {\rm dist}\big([\widehat{P}_{\!J}^k\ \ \widehat{P}_2^k],\mathbb{O}^{n}(Z^*)\big)&\leq\eta\|\widehat{X}^k(\widehat{X}^k)^{\top}\!-\!X^*(X^*)^{\top}\!\|_F\\
  &\le\eta(\|\widehat{X}^k\|\!+\!\|X^*\|)\|\widehat{X}^k\!-\!X^*\!\|_F,
 \end{align*}
  which by the convergence of $\{\widehat{X}^k\}_{k\in\mathbb{N}}$ implies 
  $\lim\limits_{ k\to\infty}{\rm dist}\big([\widehat{P}_{\!J}^k\ \widehat{P}_2^k],\mathbb{O}^{n}(Z^*)\big)=0.$
  Let $\mathbb{O}^{\overline{r}}(Z^*)\!:=\{P\in\mathbb{O}^{n\times\overline{r}}\,|\, Z^*=P[\Sigma_{\overline{r}}(X^*)]^2P^{\top}\big\}$. For any cluster point $\widehat{P}_{\!J}$ of $\{\widehat{P}_J^k\}_{k\in\mathbb{N}}$, we have
  ${\rm dist}\big(\widehat{P}_{\!J},\mathbb{O}^{\overline{r}}(Z^*)\big)=0$, and hence ${\rm col}(\widehat{P}_{\!J})= {\rm col}(\widetilde{P}^*)$. 
  
  Now pick any $u\in\mathcal{L}$. Then
  \(
   0=\lim_{k\to\infty}{\rm dist}(u,{\rm col}(\widehat{U}^k))=\!\lim_{k\to\infty}\|\widehat{P}_{\!J}^{k}(\widehat{P}_{\!J}^{k})^{\top}u-u\|.
  \)
  From the boundedness of $\{\widehat{P}_{\!J}^{k}\}$, there exists a cluster point $\widehat{P}_{\!J}$ such that
  $u=\widehat{P}_{\!J}\widehat{P}_{\!J}^{\top}u$, i.e., $u\in{\rm col}(\widehat{P}_{\!J})={\rm col}(\widetilde{P}^*)={\rm col}(X^*)$. This shows that $\mathcal{L}\subset{\rm col}(X^*)$. For the converse inclusion, from
  $\lim_{ k\to\infty} {\rm dist}\big([\widehat{P}_{\!J}^k\ \ \widehat{P}_2^k],\mathbb{O}^{n}(Z^*)\big)=0$, it is not hard to deduce that
   \[
 \lim_{k\to\infty}\Big[\min_{R\in\mathbb{O}^{\overline{r}},\widetilde{P}^*R\in\mathbb{O}^{\overline{r}}(Z^*)}\|\widetilde{P}^*-\widehat{P}_J^kR\|_F\Big]=0,
 \]
 which implies $\lim_{k\to\infty}\|\widehat{P}_{\!J}^k(\widehat{P}_{\!J}^k)^{\top}\!-\!\widetilde{P}^*(\widetilde{P}^*)^{\top}\|_F\!=\!0$. Pick any $u\in{\rm col}(X^*)\!=\!{\rm col}(\widetilde{P}^*)$. We have $u=\widetilde{P}^*(\widetilde{P}^*)^{\top}u$. Then,  $\lim_{k\to\infty}{\rm dist}(u,{\rm col}(\widehat{U}^k))=\lim_{k\to\infty}\|\widehat{P}_{\!J}^k(\widehat{P}_{\!J}^k)^{\top}u-u\|=0$. This shows that $u\!\in\!\mathcal{L}$, and the converse inclusion follows.
\end{proof}
\section{Numerical experiments}\label{sec5}

To validate the efficiency of Algorithm \ref{MPAMSC}, we apply it to compute one-bit matrix completions with noise, and compare its performance with that of PALM\_ls (i.e., Algorithm \ref{LSPALM} in Appendix B), PLAM (i.e., Algorithm 2 without line search), and the Hybrid AMM proposed in \cite{TaoQianPan22} but without line search. All tests are performed in MATLAB 2024b on a laptop computer running on 64-bit Windows Operating System with an Intel(R) Core(TM) i9-13905H CPU 2.60GHz and 32 GB RAM.

\subsection{One-bit matrix completions with noise}\label{sec5.1}

We consider one-bit matrix completion under a uniform sampling scheme, in which the unknown true $M^*\in\mathbb{R}^{n\times m}$ is assumed to be low rank. Instead of observing noisy entries of $M=M^*+E$ directly, where $E$ is a noise matrix with i.i.d. entries, we now observe with error the sign of a random subset of the entries of $M^*$. More specifically, assume that a random sample $\Omega=\{(i_1,j_1),(i_2,j_2),\ldots,(i_N,j_N)\}\subset([n]\times[m])^{N}$ of the index set is drawn i.i.d. with replacement according to a uniform sampling distribution $\mathbb{P}\{(i_t,j_t)=(k,l)\}=\frac{1}{nm}$ with $(k,l)\in[n]\times[m]$ for all $t\in[N]$, and the entries $Y_{ij}$ of a sign matrix $Y$ with $(i,j)\in\Omega$ are observed. Let $\phi\!:\mathbb{R}\to[0,1]$ be a cumulative distribution function of $-E_{11}$. Then, the above observation model can be recast as
\begin{equation}\label{1bit-model}
 Y_{ij}=\left\{\!\begin{array}{ll}
        +1 &\ {\rm with\ probability}\ \phi(M^*_{ij}),\\
       -1 &\ {\rm with\ probability}\ 1-\phi(M^*_{ij}),
    \end{array} \right.
\end{equation}
and observe noisy entries $\{Y_{i_t,j_t}\}_{t=1}^{N}$ indexed by $\Omega$. More details can be found in \cite{Cai13}. Two common choices for the function $\phi$ or the distribution of $\{E_{ij}\}$ are as follows:
\begin{description}
 \item [\bf{(I)}] (Logistic regression/noise): The logistic regression model is described by \eqref{1bit-model} with $\phi(x)=\frac{e^{x}}{1+e^x}$ and $E_{ij}$ i.i.d. obeying the standard logistic distribution.

 \item [\bf{(II)}] (Laplacian noise): In this case, $E_{ij}$ i.i.d. obey a Laplacian distribution \textbf{Laplace} $(0,b)$ with the scale parameter $b>0$, and the function $\phi$ has the following form
 \[
   \phi(x)=\left\{\!\begin{array}{cl}
           \frac{1}{2}\exp(x/b)&\ {\rm if}\ x<0,\\
           1-\frac{1}{2}\exp(-x/b)&\ {\rm if}\ x\geq0.
          \end{array} \right.
  \]
 \end{description}

Given a collection of observations $Y_{\Omega}=\{Y_{i_t,j_t}\}_{t=1}^{N}$ from model \eqref{1bit-model}, the negative log-likelihood function can be written as
\begin{equation*}
 f(X)
  =-\sum_{(i,j)\in\Omega}\Big(\mathbb{I}_{[Y_{ij}=1]}\ln \phi(X_{ij})+\mathbb{I}_{[Y_{ij}=-1]}\ln(1- \phi(X_{ij}))\Big).
\end{equation*}
Under case (I),
for each $(i,j)\in[n]\times[m]$, $[\nabla^2\!f(X)]_{ij}=\frac{\exp(X_{ij})}{(1+\exp(X_{ij}))^2}$ for $X\in\mathbb{R}^{n\times m}$, so $\nabla\!f$ is Lipschitz continuous with $L_{\!f}=1$; while under case (II), for any $X\in\mathbb{R}^{n\times m}$ and each $(i,j)\in[n]\times[m]$, $[\nabla^2\!f(X)]_{ij}=\frac{2\exp(-|x|/b)}{b^2(2-\exp(-|x|/b))^2}$ if $X_{ij}Y_{ij}\ge 0$, otherwise $[\nabla^2\!f(X)]_{ij}=0$. Clearly, for case (II), $\nabla\!f$ is Lipschitz continuous with $L_{\!f}={2}/{b^2}$.

For the subsequent tests, we generate randomly the true matrix $M^*\!=M_{L}^*(M_{R}^*)^{\top}$ of rank $r^*$ with the entries of $M_{L}^*\in\mathbb{R}^{n\times r^*}$ and $M_{R}^*\in\mathbb{R}^{m\times r^*}$ drawn independently from a uniform distribution on $[-\frac{1}{2},\frac{1}{2}]$, and then obtain one-bit observations by adding noise and recording the signs of the resulting values. Among others, the noise obeys the standard logistic distribution for case (I) and the Laplacian distribution $\textbf{Laplace}(0,b)$ for case (II) with $b=2$. The noisy observation entries $Y_{i_t,j_t}$ with $(i_t,j_t)\in\Omega$ are obtained by \eqref{1bit-model}, where the index set $\Omega$ is given by uniform sampling. 

\subsection{Implementation details of Algorithm \ref{MPAMSC}}\label{sec5.2}

 First we take a look at the choice of the parameters in Algorithm \ref{MPAMSC}. From equations \eqref{Uk-def}-\eqref{Vk-def} in Remark \ref{remark-MPAMSC} (c), for fixed $\lambda$ and $\mu$, a smaller $\gamma_{1,0}$ (respectively, $\gamma_{2,0}$) will lead to a smooth change of the iterate $U^k$ (respectively, $V^k$), but the associated subproblems will require a little more running time. As a trade-off, we choose $\gamma_{1,0}=\gamma_{2,0}=10^{-2}$ and $\varrho=0.8$ for the subsequent numerical tests. The parameters $\underline{\gamma_{1}}$ and $\underline{\gamma_{2}}$ are set to be $10^{-8}$. The initial $(U^0,V^0)$ is generated in Matlab command $U^0 ={\rm orth}({\rm randn}(n,r)),V^0={\rm orth}({\rm randn}(m,r))$ with $r\in[n]$ specified in the subsequent tests. We terminate Algorithm \ref{MPAMSC} at the $k$th iterate once $k>k_{\rm max}$ or 
 \begin{align*}
  \frac{\|\overline{U}^k(\overline{V}^k)^{\top}\!\!-\!\overline{U}^{k-1}\!(\overline{V}^{k-1})^{\top}\|_F}
  {\max\{1,\|\overline{U}^{k}\!(\overline{V}^{k})^{\top}\|_F\}}\!\le \!\epsilon_1\ {\rm or}\ 
  \frac{\max_{1\le i\le 9}|\Phi_{\mu,\lambda}(\overline{U}^k\!,\!\overline{V}^k)\!-\!\Phi_{\mu,\lambda}(\overline{U}^{k-i}\!,\!\overline{V}^{k-i})|}{\max\{1,\Phi_{\mu,\lambda}(\overline{U}^k\!,\!\overline{V}^k)\}}\!\le\!\epsilon_2.
 \end{align*}
 Our codes can be downloaded from https://github.com/SCUT-OptGroup/PAM\_SC.
 
 For a fair comparison, PALM and PALM\_ls also start from $(U^0,V^0)$, and use the same stop condition as for Algorithm \ref{MPAMSC}, i.e., they are terminated at the $k$th iterate whenever $k>k_{\rm max}$ or either of the following conditions is satisfied:
 \[
 \frac{\|{U}^{k}({V}^{k})^{\top}\!\!-\!\!{U}^{k-1}({V}^{k-1})^{\top}\!\|_F}
  {\max\{1,\|{U}^{k}({V}^{k})^{\top}\|_F\}}\!\le\!\epsilon_1,\,\frac{\max_{1\le i\le 9}|\Phi_{\mu,\lambda}({U}^k\!,\!{V}^k)\!\!-\!\!\Phi_{\mu,\lambda}({U}^{k-i}\!,\!{V}^{k-i})|}{\max\{1,\Phi_{\mu,\lambda}({U}^k,{V}^k)\}}\!\!\le\!\!\epsilon_2.
\]
The parameters of PALM\_ls are set as $\varrho_1\!=\!\varrho_2\!=\!5,\,\underline{\alpha}\!=\!10^{-10}$ and $\overline{\alpha}\!=\!10^{10}$. 

In addition, for the parameters $\mu$ and $\lambda$ involved in model \eqref{prob}, we always choose $\mu=10^{-8}$ and $\lambda=c_{\lambda}\max_{j\in[m]}\|Y_j\|$ with $c_{\lambda}>0$ specified in the experiments.
\subsection{Influence of spectral norm of $M^*$ on Algorithm \ref{MPAMSC}}\label{sec5.3}

 We test how the spectral norm of the true $M^*$ affects the performance of Algorithm \ref{MPAMSC}. To achieve the true matrices $M^*$ with different spectral norms, we generate randomly a true $M_0^*\in\mathbb{R}^{2000\times 2000}$ of rank $r^*=10$ in the same way as in Section \ref{sec5.1}, and set
 \[
   M^* = \frac{c_{M^*}}{\max\{\|{\rm vec}(M_0^*)\|_{\infty},1\}}M_0^*.
 \]
 The noisy observation $Y$ is achieved by \eqref{1bit-model} under Case I with the sample rate ${\rm SR}=0.4$. Figure \ref{fig0} plots the number of iterations and the running time (in seconds) of Algorithm \ref{MPAMSC} and PALM for solving problem \eqref{prob}, generated randomly as above with $c_{M^*}\in\{10, 30, 80, 100,    150,  200, 300,    400, 500, 800, 1000 \}$, with the associated parameter $c_{\lambda}\in\{3.75,    4.03,   3.85,   3.93,    3.74,    3.86,   4.08 ,  4.01,   3.85,   3.85,  3.85 \}$. The numerical results of the two solvers are obtained under the same stop condition described in Section \ref{sec5.2} with $\epsilon_1=5\times 10^{-4}$, $\epsilon_2=10^{-6}$ and $k_{\max}=1000$.
 \begin{figure}[h]
 \centering
\includegraphics[width=\textwidth]{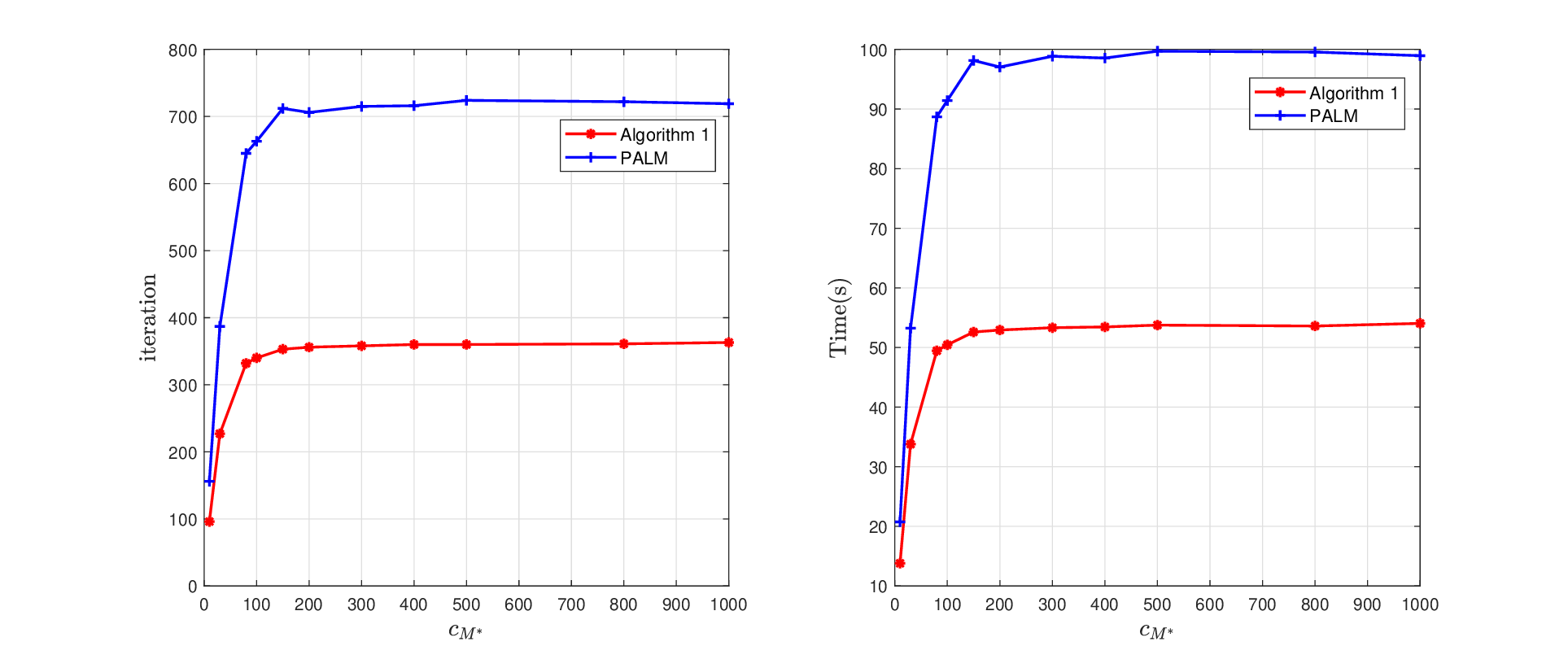}
 \caption{The number of iterations and running time of the two methods with $\theta=\theta_1$}
 \label{fig0}
\end{figure} 

We see that the number of iterations and running time of the two methods increase as the spectral norm of the true $M^*$ becomes larger. However, the number of iterations of PALM has a substantial increase when $c_{M^*}$ increases from $10$ to $150$, and the number of iterations for $c_{M^*}=150$ is about seven times that for $c_{M^*}=10$. This coincides with the remark for PALM in the introduction because, when $M^*$ has a larger spectral norm, it is highly possible for the partial gradients $\nabla_{U}F(\cdot,V^k)$ and $\nabla_{V}F(U^k,\cdot)$ to have larger the Lipschitz modulus. In contrast, the increase of the number of iterations of Algorithm \ref{MPAMSC} is moderate due to the subspace correction, and the number of iterations when $c_{M^*}=150$ is about three times that for $c_{M^*}=10$. This provides a strong support for us to study the majorized PAM method with subspace correction. Note that when $c_{M^*}>150$, the partial gradients $\nabla_{U}F(\cdot,V^k)$ and $\nabla_{V}F(U^k,\cdot)$ almost have the same magnitude as for $c_{M^*}=150$. This interprets why the number of iterations for $c_{M^*}>150$ has a tiny difference from that for $c_{M^*}=150$.  

\subsection{Numerical comparisons with PALMs and Hybrid AMM}\label{sec5.4}

We first test the performance of Algorithm \ref{MPAMSC} in terms of the quality of solutions and the running time, and compare its performance with that of PLAM and PLAM\_ls. The simulated data is generated randomly in the same way as described in Section \ref{sec5.3} with $c_{M^*}=30$, and the noisy observation is obtained by \eqref{1bit-model} under Cases I and II with the sample rate ${\rm SR}=0.4$. Figures \ref{fig01}-\ref{fig02} below plot the objective value and running time curves of the three methods for solving problem \eqref{prob} with $r=10r^*$ and three $\theta$. The numerical results of the three methods are obtained under the same stop condition stated in Section \ref{sec5.2} with  $\epsilon_1=10^{-6},\epsilon_2=10^{-6}$ and $k_{\max}=500$. 
\begin{figure}[h]
 \centering
\includegraphics[width=\textwidth]{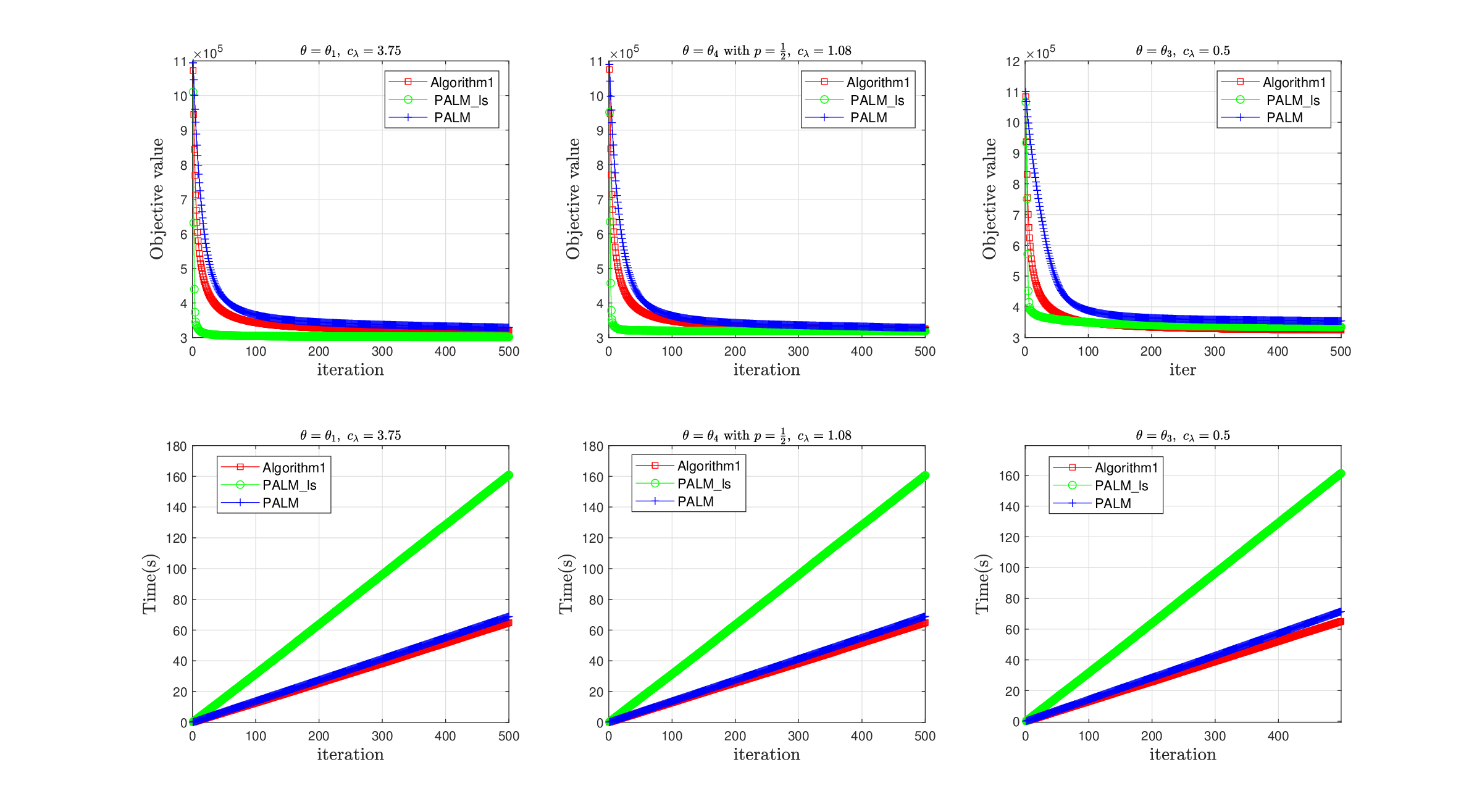}
 \caption{Curves of objective value and running time for the three solvers under Case I with different $\theta$}
  \label{fig01}
\end{figure}

\begin{figure}[h]
 \centering
\includegraphics[width=\textwidth]{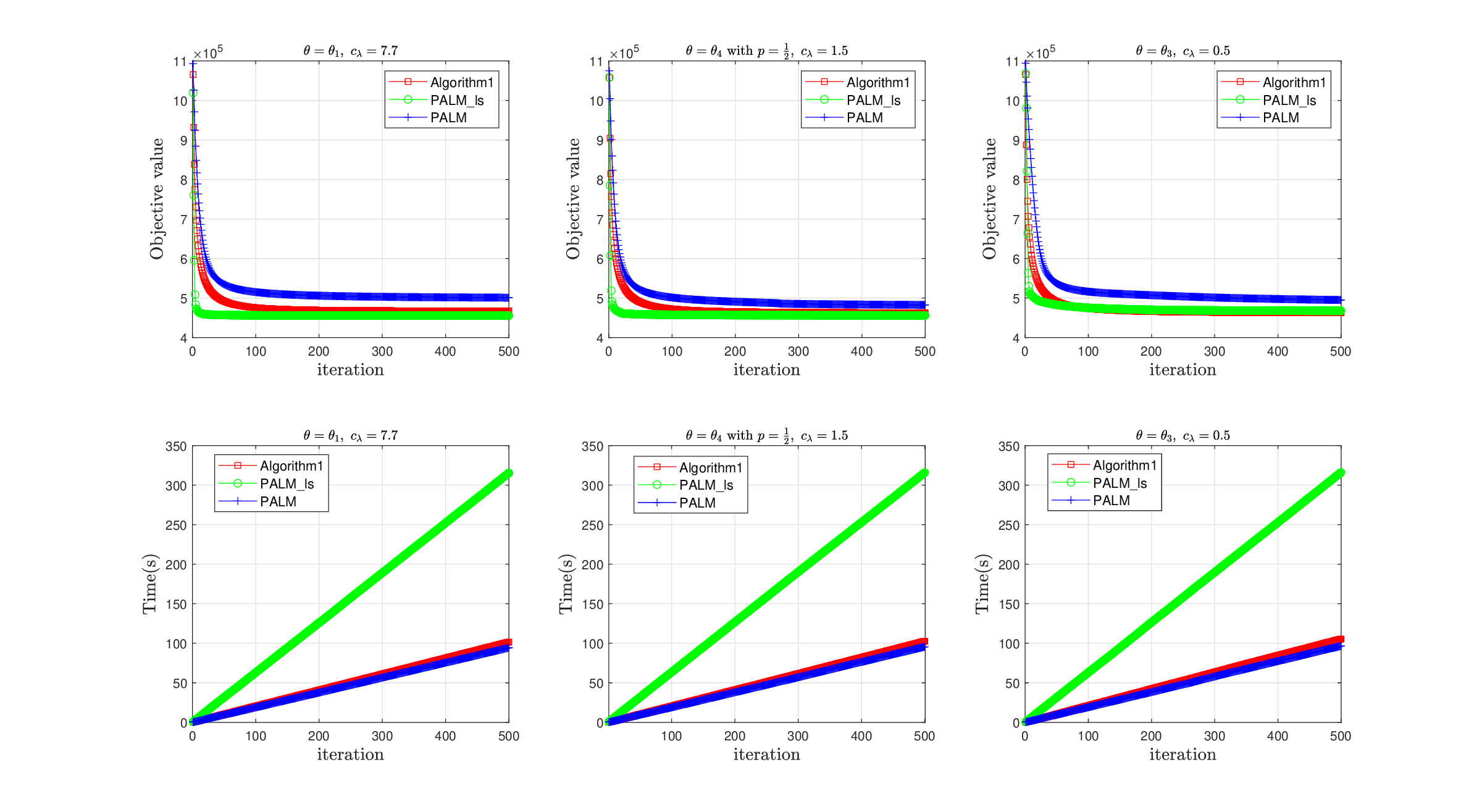}
 \caption{Curves of objective value and running time for the three solvers under Case II with different $\theta$}
  \label{fig02}
\end{figure}

We see that among the three methods, PALM\_ls produces the lowest objective value within the least iterations for the test instances, but its running time increases more quickly as the iterations increases, since the line-search technique requires more function and gradient evaluations. It seems that PALM\_ls exhibits better performance if the maximum number of iterations is used for its stop criterion. However, the stop condition only depending on the maximum number of iterations is not safe for an algorithm. From Figures 2-3, it seems that $50$ is appropriate, but it is unclear whether it is suitable for other classes of test instances. Algorithm \ref{MPAMSC} slightly outperforms PALM in terms of the objective value, and its running time has tiny difference from that of PALM, which coincides with Figure \ref{fig0} because $c_{M^*}=30$ is used for these test instances. 

 
Next we compare the recovery performance of Algorithm \ref{MPAMSC} with that of PALM and Hybrid AMM without line search for solving model \eqref{prob}. The simulated data is generated randomly in the same way as in Section \ref{sec5.3} with $c_{M^*}=30$, and the noisy observation is obtained by \eqref{1bit-model} under Cases I and II with the sample rate ${\rm SR}=0.4$. We adopt the relative error $\frac{\|X^{\rm out}-M^*\|_F}{\|M^*\|_F}$ to evaluate the recovery performance, where $X^{\rm out}=U^{\rm out}(V^{\rm out})^{\top}$ denotes the output of a solver. Consider that the relative errors (REs) generated by PALM\_ls often pushes back, and PALM\_ls requires more running time as shown in Figures \ref{fig01}-\ref{fig02}. We do not compare the performance of Algorithm \ref{MPAMSC} with that of PALM\_ls. Figures \ref{fig1}-\ref{fig2} plot the average RE and rank curves of running five different instances with $r=10r^*$, where the numerical results of the three solvers are obtained under the same stop conditions as in Section \ref{sec5.2} with $\epsilon_1=3\times 10^{-3},\epsilon_2=10^{-3}$ and $k_{\max}=300$.

\begin{figure}[h]
 \centering
\includegraphics[width=\textwidth]{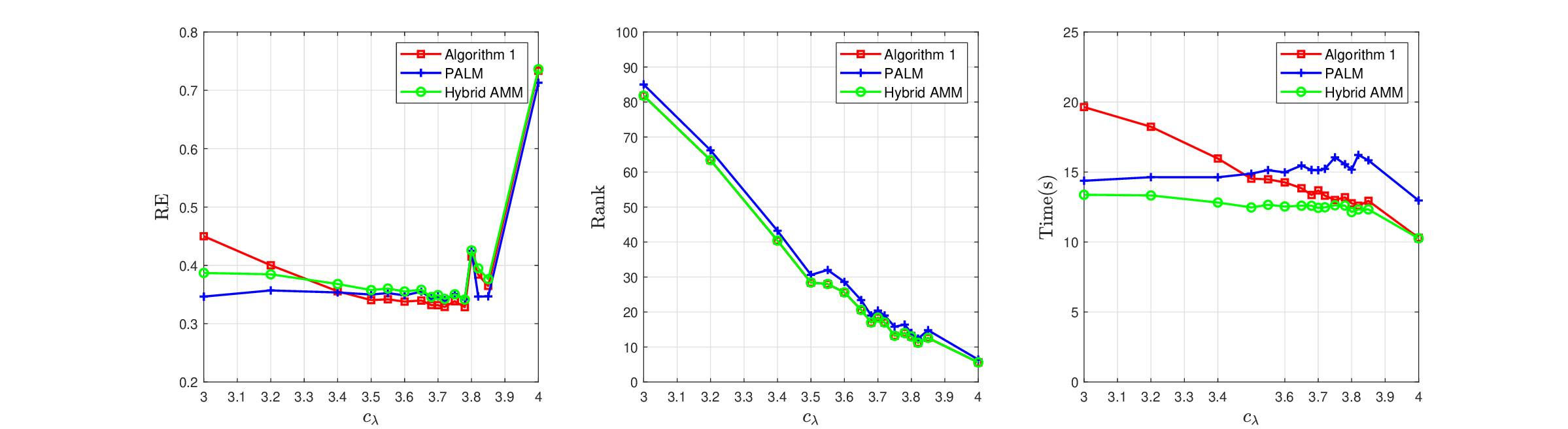}
 \caption{Curves of relative error and rank of the three solvers under Case I with $\theta=\theta_1$}
  \label{fig1}
\end{figure}

\begin{figure}[h]
  \centering
\includegraphics[width=\textwidth]{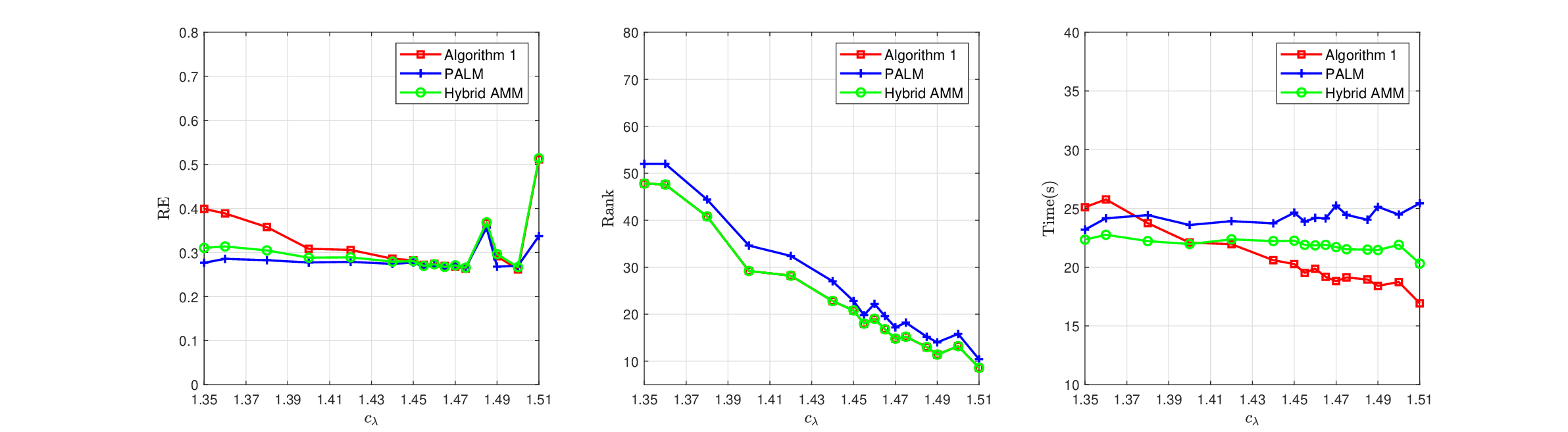}
  \caption{Curves of relative error and rank of the three solvers under Case II with $\theta=\theta_4$ and $p=\frac{1}{2}$}
  \label{fig2}
\end{figure}

 Figure \ref{fig1} shows that Algorithm \ref{MPAMSC} returns sightly lower REs when solving model \eqref{prob} with $\theta=\theta_1$ and $3.4\leq c_{\lambda}<3.8$; while Figure \ref{fig2} indicates that the three solvers yield the comparable REs when solving model \eqref{prob} with $\theta=\theta_4$ for $p=\frac{1}{2}$ and $1.40\leq c_{\lambda}<1.50$. The ranks produced by Algorithm 1 and Hybrid AMM are the same, which are a little lower than those given by PALM. The running time of Algorithm \ref{MPAMSC}  is comparable with that of Hybrid AMM and PALM. The test instances generated with $c_{M^*}=30$ accounts for the tiny difference between the running time of Algorithm \ref{MPAMSC} and that of PALM.



 Table \ref{tab} reports the average RE, rank and time of running five different instances with $n=m$ and $r=10r^*$. The results of Algorithm \ref{MPAMSC}, PALM and Hybrid AMM are all obtained under the same stop condition with $\epsilon_1= 3\times10^{-3}$, $\epsilon_2=10^{-3}$ and $k_{\rm max}=300$. We see that Algorithm \ref{MPAMSC} is slightly superior to PALM and Hybrid AMM in terms of RE for $n=3000$ and $5000$, and requires the comparable running time with Hybrid AMM, which is a little less than that of PALM.
\setlength{\tabcolsep}{1mm}
\begin{table}[h]
	\setlength{\belowcaptionskip}{-0.01cm}
	\setlength\tabcolsep{1.4pt}
	\renewcommand\arraystretch{1.3}
	\centering
	\scriptsize
	\caption{\small The results of three solvers for synthetic data under Case II and $\theta=\theta_4$ with $p=1/2$  }\label{tab}
	
          \begin{tabular*}{\textwidth}{@{\extracolsep{\fill}}cc|lccc|lccc|lccc@{\extracolsep{\fill}}}
			\hline
			& & \multicolumn{4}{l|}{\quad \quad Algorithm 1 }&\multicolumn{4}{l|}{\quad  \quad \ PALM }& \multicolumn{4}{l}{\quad \quad Hybrid AMM}\\
			\hline
		$n$ & $(r^*,SR)$\ &\ $c_{\lambda}$& RE & rank  &time&$c_{\lambda}$&  RE & rank & time&$c_{\lambda}$& RE & rank  &time\\
			\hline	
	1000   &(10,0.30) &1.10& 0.3514 & 15 & 4.88 &1.11 & 0.3448 & 15 & 6.02   &1.10&  0.3460 & 15 & 4.64   \\
		  &(10,0.35)&1.18& 0.3088 & 11 & 5.06 &1.19 & 0.3199 & 11 & 7.16    &1.18&  0.3151 &11 & 5.07 \\
            &(10,0.40)&1.22& 0.2991 & 15 & 5.39 &1.23 & 0.2923 & 15 & 6.68    &1.22& 0.2936 &15 & 5.23 \\
			\hline
      3000  &(10,0.30) &1.49& 0.2893 & 14 & 38.9 &1.50 & 0.2965 & 14 & 45.8   &1.49& 0.2953 & 14 & 38.3  \\
			&(10,0.35) &1.57& 0.2720 & 14 & 42.1 &1.58 & 0.2778 & 15 & 53.7   &1.57& 0.2776 & 14 & 41.8\\
            &(10,0.40) &1.65& 0.2524 & 15 & 43.8 &1.67 & 0.2554 & 14 & 50.6   &1.65& 0.2556 & 15 & 43.8\\
\hline
      5000  &(10,0.30) &1.69&  0.2688 & 17 & 110.1 &1.71 & 0.2703 & 17 & 125.3   &1.69& 0.2715 &17 & 104.5\\
			&(10,0.35) &1.77& 0.2523 & 19 & 114.3 &1.79 & 0.2530 & 18 & 127.9   &1.77& 0.2547 &19 & 111.1\\
            &(10,0.40) &1.87& 0.2358 & 16 & 121.2 &1.89 & 0.2378 & 16 & 132.4   &1.87& 0.2379 &16 & 117.3\\
                
			\hline
		
	\end{tabular*}
\end{table}

 \section{Conclusion}\label{sec6}

 The majorized PAM method in \cite{Hastie15,TaoQianPan22} is extended to a class of low-rank composite factorization model \eqref{prob}, which minimizes alternately the majorization $\widehat{\Phi}_{\lambda,\mu}$ of $\Phi_{\lambda,\mu}$ at each iterate and imposes a subspace correction step on per subproblem to ensure that it has a closed-norm solution. We established the full convergence of the iterate sequence and column subspace sequence of factor pairs under the KL property of $\Phi_{\lambda,\mu}$ and an additional condition that can be satisfied by the column $\ell_{2,0}$-norm function. To the best of our knowledge, this is the first subspace correction AM method with convergence certificate for low-rank factorization models. The obtained results also provide the convergence guarantee for softImpute-ALS proposed in \cite{Hastie15}.


\backmatter

\bmhead{Acknowledgements}
 The first author was supported by the Natural Science Foundation of Guangdong Province under project No.
 2023A1515111167, and the third author was supported by the National Natural Science Foundation of China under project No.12371299.

\section*{Declarations}
{\bf Conflict of interest:}  The authors declare that they have no conflict of interest.

\noindent
{\bf Data availability:} This article only involves the synthetic data. 


\setcounter{equation}{0}
\renewcommand{\theequation}{A. \arabic{equation}}

\begin{appendices}

\section{Supplementary results}\label{secA1}

This part contains two propositions. Among others, Proposition \ref{balance} states that any stationary point $(\overline{U},\overline{V})$ of problem \eqref{prob} satisfies the balance, i.e., $\overline{U}^{\top}\overline{U}=\overline{V}^{\top}\overline{V}$, while Proposition \ref{prop-sdiffgap} provides a condition for inequalities \eqref{ass3-Uk}-\eqref{ass3-Vk} to hold.

  
\renewcommand{\thetheorem}{A.1}

\begin{proposition}\label{balance}
 Let $\mathcal{S}^*$ denote the set of stationary points of \eqref{prob}. If $\theta(t)=\theta_1(t)$ for $t>0$, then $\mathcal{S}^*\subset {\rm crit}\,F_{\mu}$ with $F_{\mu}(U,V)\!:=f(UV^{\top})+(\mu/2)(\|U\|_F^2+\|V\|_F^2)$ for $(U,V)\in\mathbb{X}_r$, and consequently, every $(\overline{U},\overline{V})\in\mathcal{S}^*$
  satisfies $\overline{U}^{\top}\overline{U}=\overline{V}^{\top}\overline{V}$ and $J_{\overline{U}}=J_{\overline{V}}$.
 \end{proposition}
 \begin{proof}
  Pick any $(\overline{U},\overline{V})\in\mathcal{S}^*$. From Definition \ref{def-spoint} and Lemma \ref{subdiff-vtheta}, for any $j\in[r]$,
  \begin{equation}\label{equa1-appendix}
   0\in \nabla f(\overline{U}\overline{V}^{\top})\overline{V}+\mu \overline{U}+ \lambda\partial\vartheta(\overline{U})\ \ {\rm and}\ \
  0\in\big[\nabla f(\overline{U}\overline{V}^{\top})\big]^{\top}\overline{U}
  +\mu\overline{V}+\lambda\partial\vartheta(\overline{V}).
  \end{equation}
  For each $j\in J_{\overline{U}}$, from the first inclusion in \eqref{equa1-appendix} and Lemma \ref{subdiff-vtheta}, we have
  \[
   0=\nabla\!f(\overline{U}\overline{V}^{\mathbb{T}})\overline{V}_{\!j}
     +\mu \overline{U}_j+\lambda\theta'(\|\overline{U}_{\!j}\|){\overline{U}_j}/{\|\overline{U}_{\!j}\|},
 \]
 and hence $\|\nabla f(\overline{U}\overline{V}^{\top})\overline{V}_{\!j}\|=\big(\mu+\lambda\frac{\theta'(\|\overline{U}_j\|)}{\|\overline{U}_j\|}\big)\|\overline{U}_{\!j}\|$. Recall that $\theta'(t)\ge 0$ for $t>0$. For each $j\in J_{\overline{U}}$, it holds 
 $\|\nabla\!f(\overline{U}\overline{V}^{\top})\overline{V}_{\!j}\|>0$ and hence $\overline{V}_{\!j}\neq0$. Consequently, $J_{\overline{U}}\subset J_{\overline{V}}$. For each $j\in J_{\overline{V}}$, from the second inclusion in \eqref{equa1-appendix} and Lemma \ref{subdiff-vtheta},
 \[
  0\in\big[\nabla\!f(\overline{U}\overline{V}^{\top})\big]^{\top}\overline{U}_{\!j}
  +\mu\overline{V}_{\!j}+\lambda\theta'(\|\overline{V}_{\!j}\|){\overline{V}_{\!j}}/{\|\overline{V}_{\!j}\|},
 \]
 which by using the same arguments as above implies  $\overline{U}_{\!j}\ne 0$, and then $J_{\overline{V}}\subset J_{\overline{U}}$. Thus, $J_{\overline{U}}=J_{\overline{V}}:=J$. Together with the above \eqref{equa1-appendix}, it follows that
 \begin{equation*} \nabla\!f(\overline{U}\overline{V}^{\top})\overline{V}_{\!J}+\mu\overline{U}_{\!J}=0\ \ {\rm and}\ \      [\nabla\!f(\overline{U}\overline{V}^{\top})]^{\top}\overline{U}_{\!J}+\mu\overline{V}_{\!J}=0,
 \end{equation*}
 which implies that $\nabla\!F_{\mu}(\overline{U},\overline{V})=0$ and
 $(\overline{U},\overline{V})\in{\rm crit}\,F_{\mu}$. Consequently, the desired inclusion follows. From \cite[Lemma 2.2]{TaoPanBi21},  every $(\overline{U},\overline{V})\in\mathcal{S}^*$ satisfies $\overline{U}^{\top}\overline{U}=\overline{V}^{\top}\overline{V}$. 
 \end{proof}

 To achieve Proposition \ref{prop-sdiffgap} (ii), we need the following technical lemma.
 \renewcommand{\thelemma}{A.2}
 \begin{lemma}\label{lemma-theta}
 Let $D^k$ be a $\overline{r}\times\overline{r}$ diagonal matrix with $\underline{d}\le |D_{ii}^k|\le \overline{d}$ for all $i\in J=[\overline{r}]$. Under Assumptions \ref{ass1}-\ref{ass2}, if $\theta'$ is locally Lipschitz continuous on $\mathbb{R}_{++}$, then there exists a constant $\widehat{c}>0$ such that for each $k\in\mathbb{N}$, 
 \begin{align*}
 \|\nabla\vartheta_{\!J}(\overline{U}_{\!J}^{k+1})-\nabla\vartheta_{\!J}(U_{\!J}^{k+1}D^k)\|_F
  \le\widehat{c}\|\overline{U}_{\!J}^{k+1}-U_{\!J}^{k+1}D^k\|_F,\\
 \|\nabla\vartheta_{\!J}(\overline{V}_{\!J}^{k+1})-\nabla\vartheta_{\!J}(V_{\!J}^{k+1}D^k)\|_F
  \le\widehat{c}\|\overline{V}_{\!J}^{k+1}-V_{\!J}^{k+1}D^k\|_F.  
 \end{align*}
 \end{lemma}
 \begin{proof}
 It suffices to prove that the first inequality holds. 
 By Remark \ref{remark-MPAMSC} (c) and Assumption \ref{ass2}, $\min\{\|U_{j}^{k+1}D_{jj}^k\|,\|V_{j}^{k+1}D_{jj}^k\|\}\ge c_{p}\underline{d}$ for all $j\in J$; while by Corollary \ref{corollary4.1} (iii) $\max\{\|U_{j}^{k+1}D_{jj}^k\|,\|V_{j}^{k+1}D_{jj}^k\|\}\le\overline{\beta}\,\overline{d}$ for all $j\in J$. Note that $\theta'$ is Lipschitz continuous on the interval $[c_p\underline{d},\overline{\beta}\,\overline{d}]$. We denote by $L_{\theta'}$ the Lipschitz constant of $\theta'(\cdot)$ on $[c_p\underline{d},\overline{\beta}\overline{d}]$. 
 For any $u,v\in \mathbb{R}^r$ with $\|u\|,\|v\|\in[\alpha\underline{d},\overline{\beta}\,\overline{d}]$, it holds 
 \begin{align*}
 \big\|\theta'(\|u\|)\frac{u}{\|u\|}-\theta'(\|v\|)\frac{v}{\|v\|}\big\|&=\big\|[\theta'(\|u\|)-\theta'(\|v\|)]\frac{u}{\|u\|}+\theta'(\|v\|)\big(\frac{u}{\|u\|}-\frac{v}{\|v\|}\big)\big\|\nonumber\\   
 &\leq L_{\theta'}\big \|u-v\|+|\theta'(\|v\|)|\Big\|\frac{u-v}{\|u\|}+v\big(\frac{1}{\|u\|}-\frac{1}{\|v\|}\big)\Big\|\nonumber\\
 &\leq L_{\theta'}\big \|u-v\|+\frac{2|\theta'(\|v\|)|}{\|u\|}\|u-v\|.
 \end{align*}
Together with the expression of $\vartheta_{\!J}$ in \eqref{def-vthetaJ} and Lemma \ref{subdiff-vtheta}, it follows that 
 \begin{align*}
 \|\nabla\vartheta_{\!J}(\overline{U}_{\!J}^{k+1})-\nabla\vartheta_{\!J}(U_{\!J}^{k+1}D^k)\|_F
  &\leq \sum_{j=1}^{\overline{r}}\Big[L_{\theta'}+\frac{2|\theta'(\|U_j^{k+1}D_{jj}^k\|)|}{\|\overline{U}_j^{k+1}\|}\Big]\|\overline{U}_j^{k+1}-U_j^{k+1}D_{jj}^k\|\\
  &\le\Big[L_{\theta'}+\frac{2c_0}{\sqrt{\alpha}}\Big] \sum_{j=1}^{\overline{r}}\|\overline{U}_j^{k+1}-U_j^{k+1}D_{jj}^k\|
 \end{align*} 
 with $c_0:=\sup_{t\in[c_p\underline{d},\overline{\beta}\,\overline{d}]}\theta'(t)$. The above inequality implies the desired result. 
 \end{proof}
 
 \renewcommand{\thetheorem}{A.3}
 
 \begin{proposition}\label{prop-sdiffgap}
 Suppose that Assumptions \ref{ass1}-\ref{ass2} hold with $\theta'$ being locally Lipschitz on $\mathbb{R}_{++}$, that there exists $W^*=(\widehat{U}^*,\widehat{V}^*,\overline{U}^*,\overline{V}^*,\widehat{X}^*,\overline{X}^*)\in\mathcal{W}^*$ with $\overline{X}^*$ having distinct nonzero singular values, and that $\Phi_{\lambda,\mu}$ is a KL function. Let $\delta:=\frac{1}{2}\min_{1\le i<j\le\overline{r}+1}[\sigma_i(\overline{X}^*)-\sigma_j(\overline{X}^*)]$ where $\overline{r}$ is the same as in Lemma \ref{Lemma4.1}. Then, 
 \begin{description}
 \item [(i)] for any $X,X'\in\mathbb{B}(\overline{X}^*,\delta/2)\cap\{X\in\mathbb{R}^{n\times m}\,|\ {\rm rank}(X)=\overline{r}\}$, the following inequality
 \[
  \sqrt{\|U_{\!J}{\rm Diag(\omega)}-U_{\!J}'\|_F^2+\|V_{\!J}{\rm Diag(\omega)}-V_{\!J}'\|_F^2}\le\frac{2\sqrt{\overline{r}}}{\delta}\|X-X'\|_F
 \]
 holds for all $(U,V)\in\mathbb{O}^{m,n}(X)$ and $(U',V')\in\mathbb{O}^{m,n}(X')$, where $\omega = (\omega_1,\ldots,\omega_{\overline{r}})^{\top}$ with $\omega_i={\rm sgn}(U_i^\top U'_i+V_i^\top V'_i)$ for each $i\in J:=[\overline{r}]$; 
  
 \item [(ii)] there exists an index $\widetilde{k}\in\mathbb{N}$ such that for all $k\ge\widetilde{k}$, 
 \begin{align*}
 \overline{X}^{k+1},\widehat{X}^{k+1}\in\mathbb{B}(\overline{X}^*,\delta/2)\cap\{X\in\mathbb{R}^{n\times m}\,|\ {\rm rank}(X)\!=\!\overline{r}\},\qquad\\
 {\rm dist}\big(0,\partial \Phi_{\lambda,\mu}(\overline{U}^{{k}+1},\overline{V}^{{k}+1})\big)\le c_{s}(\|\overline{X}^{{k}+1}-\overline{X}^{{k}}\|_F+\|\widehat{X}^{{k}+1}-\overline{X}^{{k}}\|_F)\quad\\
  +c_{s}(\|U^{{k}+1}-\overline{U}^{{k}}\|_F+\|V^{{k}+1}-\!\widehat{V}^{{k}+1}\|_F).
\end{align*}
 \end{description}
 \end{proposition}
 \begin{proof}
 \noindent
{\bf(i)} Fix any $X,X'\in\mathbb{B}(\overline{X}^*,\delta/2)\cap\{X\in\mathbb{R}^{n\times m}\,|\,{\rm rank}(X)=\overline{r}\}$. Pick any $(U,V)\in\mathbb{O}^{m,n}(X)$ and $(U',V')\in\mathbb{O}^{m,n}(X')$. By invoking \cite[Theorem 2.1]{Dopico00} with $A=X$ and $\widetilde{A}=X'$ for $k=1$, for each $i\in[\overline{r}]$, with $\delta_i=\min_{j\in[n]\backslash\{i\}}|\sigma_i(X)-\sigma_j(X')|$ it holds
\begin{equation}\label{UVi-result}
 \sqrt{\|U_i\omega_i-U_i'\|^2+\|V_i\omega_i-V_i'\|^2}\le\frac{2}{\delta_i}\|X-X'\|_F.   
\end{equation}
Fix any $i\in J$. From \cite[IV Theorem 4.11]{Steward90} and the definition of $\delta$, for each $j\in[n]\backslash\{i\}$,  
\begin{align*}
|\sigma_i(X)-\sigma_j(X')|&=|\sigma_i(X)-\sigma_i(\overline{X}^*)+\sigma_i(\overline{X}^*)-\sigma_j(\overline{X}^*)+\sigma_j(\overline{X}^*)-\sigma_j(X')|\\
&\ge |\sigma_i(\overline{X}^*)-\sigma_j(\overline{X}^*)|-|\sigma_i(X)-\sigma_i(\overline{X}^*)|-|\sigma_j(X')-\sigma_j(\overline{X}^*)|\ge\delta,
\end{align*}
which implies that $\delta_i\ge \delta$ for each $i\in[\overline{r}]$. Together with the above \eqref{UVi-result}, it follows 
\begin{align*}
&\sqrt{\|U_J{\rm Diag(\omega)}-U'_J\|_F^2+\|V_J{\rm Diag(\omega)}-V'_J\|_F^2}\\
&\le\sqrt{\overline{r}}\max_{i\in J}\sqrt{\|U_i\omega_i-U_i'\|^2+\|V_i\omega_i-V_i'\|^2}\le\frac{2\sqrt{\overline{r}}}{\delta}\|X-X'\|_F.
\end{align*}

\noindent
{\bf(ii)} According to Corollary \ref{corollary4.1} (iii), it suffices to consider that $\Phi_{\lambda,\mu}(\overline{U}^k,\overline{V}^k)>\varpi^*$ for all $k\in\overline{k}$, where $\overline{k}$ is the same as in Proposition \ref{prop-nzind}. If not, we will have $\overline{X}^k=\overline{X}^{\overline{k}}$ for all $k>\overline{k}$, and consequently, the sequence $\{\overline{X}^k\}_{k\in\mathbb{N}}$ is convergent. Write
 \[
 \Omega^*\!:=\!\big\{(U,V)\in\mathbb{X}_r\,|\,\exists (\widehat{U},\widehat{V})\in\!\mathbb{X}_r, \widehat{X},\overline{X}\in\mathbb{R}^{n\times m}\ {\rm s.t.}\ (\widehat{U},\widehat{V},U,V,\widehat{X},\overline{X})\in\mathcal{W}^*\big\}. 
 \]
 By Theorem \ref{Sub-convergence}, $\mathcal{W}^*$ is nonempty and compact, so is the set $\Omega^*$. Moreover, $\Phi_{\lambda,\mu}(U,V)=\varpi^*$ for all $(U,V)\in\Omega^*$. From the KL property of $\Phi_{\lambda,\mu}$ on $\Omega^*$ and \cite[Lemma 6]{Bolte14}, there exist $\varepsilon>0,\eta>0$ and $\varphi\in\Upsilon_{\!\eta}$ such that for all $(U,V)\in[\varpi^*<\Phi_{\lambda,\mu}<\varpi^*+\eta]\cap\mathfrak{B}(\Omega^*,\varepsilon)$ with $\mathfrak{B}(\Omega^*,\varepsilon)\!:=\big\{(U,V)\in\mathbb{X}_{r}\,|\,{\rm dist}((U,V),\Omega^*)\le\varepsilon\big\}$,
 \[
  \varphi'(\Phi_{\lambda,\mu}(U,V)-\varpi^*){\rm dist}(0,\partial\Phi_{\lambda,\mu}(U,V))\ge 1.
 \]
 By Theorems \ref{converge-obj}-\ref{Sub-convergence}, if necessary by increasing $\overline{k}$, where $\overline{k}$ is the same as in Proposition \ref{prop-nzind}, we have  $(\overline{U}^{k},\overline{V}^{k})\in[\varpi^*<\Phi_{\lambda,\mu}<\varpi^*+\eta]\cap\mathfrak{B}(\Omega^*,\varepsilon)$ for all $k\ge\overline{k}$. Consequently, 
 \[
  \varphi'\big(\Phi_{\lambda,\mu}(\overline{U}^{k},\overline{V}^{k})-\varpi^*\big){\rm dist}(0,\partial\Phi_{\lambda,\mu}(\overline{U}^{k},\overline{V}^{k}))\ge 1\quad\forall k\ge\overline{k}.
 \]
 For each $k\ge\overline{k}$, let $\Gamma_{k,k+1}\!:=\varphi\big(\Phi_{\lambda,\mu}(\overline{U}^k,\overline{V}^k)\!-\!\varpi^*\big)
 \!-\!\varphi\big(\Phi_{\lambda,\mu}(\overline{U}^{k+1},\overline{V}^{k+1})\!-\!\varpi^*\big)$. From the above inequality and the concavity of $\varphi$, it follows that for all $k\ge\overline{k}$, 
 \begin{align}\label{ineq1-KLPhi}
 \Phi_{\lambda,\mu}(\overline{U}^{k},\overline{V}^{k})-\Phi_{\lambda,\mu}(\overline{U}^{k+1},\overline{V}^{k+1})
 &\le{\rm dist}(0,\partial \Phi_{\lambda,\mu}(\overline{U}^{k},\overline{V}^{k}))\,\Gamma_{k,k+1}.
 \end{align}  
 In the rest of the proof, let $\widehat{\omega}^{k+1}=(\widehat{\omega}_1^{k+1},\ldots,\widehat{\omega}_{\overline{r}}^{k+1})^{\top}$ and $\widetilde{\omega}^{k}=(\widetilde{\omega}_1^{k+1},\ldots,\widetilde{\omega}_{\overline{r}}^{k})^{\top}$ with 
 $\widehat{\omega}_i^{k+1}={\rm sgn}((\widehat{P}_i^{k+1})^\top\widetilde{R}_i^{k+1}\!+(\widetilde{Q}_i^{k+1})^\top\overline{P}_i^{k+1})$ and
 $\widetilde{\omega}_i^{k}={\rm sgn}((\widetilde{R}_i^{k})^\top\widetilde{R}_i^{k+1}\!+(\overline{P}_i^{k})^\top\overline{P}_i^{k+1})$ for each $k$, where $\widehat{P}^{k+1},\widetilde{R}^{k+1},\overline{P}^k$ and $\overline{P}^{k+1}$ are the same as in the proof of Lemma \ref{lemma-UVk}.

Since $\overline{X}^*$ is a cluster point of $\{\widehat{X}^k\}$ and $\{\overline{X}^k\}$ by Theorem \ref{Sub-convergence}, in view of Lemma \ref{rank-prop} (i), there exists $\widetilde{k}\ge\overline{k}$ such that $\overline{X}^{\widetilde{k}+1},\widehat{X}^{\widetilde{k}+1}\in\mathbb{B}(\overline{X}^*,\delta/4)\cap\{X\in\mathbb{R}^{n\times m}\,|\,{\rm rank}(X)=\overline{r}\}$. From item (i) with $(X,X')=(\overline{X}^{\widetilde{k}+1},\widehat{X}^{\widetilde{k}+1})$ and $(X,X')=(\overline{X}^{\widetilde{k}},\overline{X}^{\widetilde{k}+1})$, we conclude that \eqref{dist-X1}-\eqref{dist-X2} hold with $k=\widetilde{k}, 
R_1^{k+1}={\rm Diag}(\widehat{\omega}^{k+1})$ and $ R_2^{k}={\rm Diag}(\widetilde{\omega}^{k})$ but $\frac{2}{\sigma_{\overline{r}}(\overline{X}^{k+1})}$ replaced by $\frac{2\sqrt{\overline{r}}}{\delta}$. From the proof of Lemma \ref{lemma-UVk}, its conclusion still holds with $k=\widetilde{k},A_1^{k+1}=R_1^{k+1}, B_1^{k}=R_2^{k}$ but $\frac{\sqrt{\alpha}+4\overline{\beta}}{2\alpha}$ replaced by $\frac{\delta+4\overline{\beta}\sqrt{\overline{r}\alpha}}{2\sqrt{\alpha}\delta}$. Let $J=[\overline{r}]$. From the expression of $\vartheta_{\!J}$ in \eqref{def-vthetaJ}, for all $k\in\mathbb{N}$,  
\begin{align}\label{ineq1-A1B1k}
 \nabla\vartheta_{\!J}(U_{\!J}^{k+1})B_1^k=\nabla\vartheta_{\!J}(U_{\!J}^{k+1}){\rm Diag}(\widetilde{\omega}^{k}))=\nabla\vartheta_{\!J}(U^{k+1}_{\!J}{\rm Diag}(\widetilde{\omega}^{k})),\qquad\\ 
 \label{ineq2-A1B1k}
  \nabla\vartheta_{\!J}(V_{\!J}^{k+1})A_1^{k+1}=\nabla\vartheta_{\!J}(V_{\!J}^{k+1}){\rm Diag}(\widehat{\omega}^{k+1}))=\nabla\vartheta_{\!J}(V^{k+1}_{\!J}{\rm Diag}(\widehat{\omega}^{k+1})).
\end{align}
Then, in view of the local Lipschitz continuity of $\theta'$ on $\mathbb{R}_{++}$, using Lemma \ref{lemma-theta} and equations \eqref{ineq1-A1B1k}-\eqref{ineq2-A1B1k} leads to inequalities \eqref{ass3-Uk}-\eqref{ass3-Vk} for  $k=\widetilde{k}$. By Proposition \ref{subdiff-gap}, 
\begin{align}\label{bound-distuvk}
{\rm dist}\big(0,\partial \Phi_{\lambda,\mu}(\overline{U}^{\widetilde{k}+1},\overline{V}^{\widetilde{k}+1})\big)&\le c_{s}(\|\overline{X}^{\widetilde{k}+1}-\overline{X}^{\widetilde{k}}\|_F+\|\widehat{X}^{\widetilde{k}+1}-\overline{X}^{\widetilde{k}}\|_F)\nonumber\\
&\quad +c_{s}(\|U^{\widetilde{k}+1}-\overline{U}^{\widetilde{k}}\|_F+\|V^{\widetilde{k}+1}-\widehat{V}^{\widetilde{k}+1}\|_F).
\end{align}
This shows that the conclusion holds for $k=\widetilde{k}$. 
Write $a:=\min(\frac{\overline{\gamma}}{4},\frac{\overline{\gamma}}{16\overline{\beta}^2})$, and let
\[
  \Xi^{k}:=\|\overline{X}^{k}\!-\!\overline{X}^{k-1}\|_F+\|\widehat{X}^{k}\!-\!\overline{X}^{k-1}\|_F+\|U^{k}\!-\overline{U}^{k-1}\|_F+\|V^{k}-\widehat{V}^{k}\|_F
\]
for each $k\in\mathbb{N}$. In what follows, we prove by induction that for all $\nu\ge\widetilde{k}+2$,    
\begin{align}\label{aim-ineq1}
 \max\{\|\overline{X}^{\nu}-\overline{X}^*\|,\|\widehat{X}^{\nu}-\overline{X}^*\|\}\le\delta/2,\qquad\\
  \label{aim-ineq2}
  \frac{\sqrt{a}}{2}\sum_{j=\widetilde{k}+2}^{\nu}\Xi^j\le\frac{\sqrt{a}}{4}\sum_{j=\widetilde{k}+2}^{\nu}\Xi^{j-1}+\frac{c_s}{\sqrt{a}}\sum_{j=\widetilde{k}+2}^{\nu}\Gamma_{j-1,j}.
\end{align}
Indeed, from Corollary \ref{corollary4.1} (iii) and the expression of $\Xi^{\widetilde{k}+2}$, it follows that 
\begin{align}\label{ineq-ktilde}
 \frac{\sqrt{a}}{2}\Xi^{\widetilde{k}+2}&\le\sqrt{\Phi_{\lambda,\mu}(\overline{U}^{\widetilde{k}+1},\overline{V}^{\widetilde{k}+1})-\Phi_{\lambda,\mu}(\overline{U}^{\widetilde{k}+2},\overline{V}^{\widetilde{k}+2})}\nonumber\\
 &\stackrel{\eqref{ineq1-KLPhi}}{\le}\sqrt{{\rm dist}(0,\partial \Phi_{\lambda,\mu}(\overline{U}^{\widetilde{k}+1},\overline{V}^{\widetilde{k}+1}))\Gamma_{\widetilde{k}+1,\widetilde{k}+2}}\stackrel{\eqref{bound-distuvk}}{\le}\sqrt{c_{s}\Xi^{\widetilde{k}+1}\Gamma_{\widetilde{k}+1,\widetilde{k}+2}}\nonumber\\
 &\le \frac{\sqrt{a}}{4}\Xi^{\widetilde{k}+1}+\frac{c_s}{\sqrt{a}}\Gamma_{\widetilde{k}+1,\widetilde{k}+2}.
 \end{align}
 By Corollary \ref{corollary4.1} (i) and (iv) and Theorem \ref{converge-obj}, for
 all $k\ge\widetilde{k}$ (if necessary by increasing $\widetilde{k}$), we have 
 $\Xi^{\widetilde{k}+1} +\frac{4c_{s}}{a}\varphi\big(\Phi_{\lambda,\mu}(\overline{U}^{\widetilde{k}+1},\overline{V}^{\widetilde{k}+1})-\varpi^*\big)\le\frac{1}{2}\delta$, which by \eqref{ineq-ktilde} and the nonnegativity of $\varphi$ implies  
 $\Xi^{\widetilde{k}+2}\le\delta/4$. Then, from   $\overline{X}^{\widetilde{k}+1},\widehat{X}^{\widetilde{k}+1}\in\mathbb{B}(\overline{X}^*,\delta/4)$, we get
 \begin{align*}
  \|\overline{X}^{\widetilde{k}+2}-\overline{X}^*\|\le\|\overline{X}^{\widetilde{k}+1}-\overline{X}^*\|+\|\overline{X}^{\widetilde{k}+2}-\overline{X}^{\widetilde{k}+1}\|\le\|\overline{X}^{\widetilde{k}+1}-\overline{X}^*\|+\Xi^{\widetilde{k}+2}\le\delta/2,\\
  \|\widehat{X}^{\widetilde{k}+2}-\overline{X}^*\|\le\|\overline{X}^{\widetilde{k}+1}-\overline{X}^*\|+\|\widehat{X}^{\widetilde{k}+2}-\overline{X}^{\widetilde{k}+1}\|\le\|\overline{X}^{\widetilde{k}+1}-\overline{X}^*\|+\Xi^{\widetilde{k}+2}\le\delta/2.
 \end{align*}
 Along with Lemma \ref{rank-prop} (i),  $\overline{X}^{\widetilde{k}+2},\widehat{X}^{\widetilde{k}+2}\in\{X\in\!\mathbb{R}^{n\times m}\,|\,{\rm rank}(X)=\overline{r}\}\cap\mathbb{B}(\overline{X}^*,\delta/2)$. This, together with the above \eqref{ineq-ktilde}, shows that inequalities \eqref{aim-ineq1}-\eqref{aim-ineq2} hold for $\nu=\widetilde{k}+2$. Now assuming that inequalities \eqref{aim-ineq1}-\eqref{aim-ineq2} hold for some $\nu\ge \widetilde{k}+2$, we claim that they hold for $\nu+1$. Since \eqref{aim-ineq1} holds, from item (i) with 
 $(X,X')=(\overline{X}^{\nu},\widehat{X}^{\nu})$ and $(X,X')=(\overline{X}^{\nu-1},\overline{X}^{\nu})$, inequalities \eqref{dist-X1}-\eqref{dist-X2} hold with $k=\nu,R_1^{k}={\rm Diag}(\widehat{\omega}^{k}),R_2^{k-1}={\rm Diag}(\widehat{\omega}^{k-1})$ but $\frac{2}{\sigma_{\overline{r}}(\overline{X}^{k+1})}$ replaced by $\frac{2\sqrt{\overline{r}}}{\delta}$.  By the proof of Lemma \ref{lemma-UVk}, its conclusion still holds with $k=\nu,A_1^{k}=R^{k}_1$ and $B_1^{k-1}=R^{k-1}_2$ but $\frac{\sqrt{\alpha}+4\overline{\beta}}{2\alpha}$ replaced by $\frac{\delta+4\overline{\beta}\sqrt{\alpha\overline{r}}}{2\sqrt{\alpha}\delta}$. In addition, by the local Lipschitz continuity of $\theta'$ on $\mathbb{R}_{++}$, using Lemma \ref{lemma-theta} and equations \eqref{ineq1-A1B1k}-\eqref{ineq2-A1B1k} leads to inequalities \eqref{ass3-Uk}-\eqref{ass3-Vk} for $k=\nu$. Then, from Proposition \ref{subdiff-gap},
\begin{align}\label{bound-distuvk2}
{\rm dist}\big(0,\partial \Phi_{\lambda,\mu}(\overline{U}^{\nu},\overline{V}^{\nu})\big)&\le c_{s}(\|\overline{X}^{\nu}-\overline{X}^{\nu-1}\|_F+\|\widehat{X}^{\nu}-\overline{X}^{\nu-1}\|_F)\nonumber\\
&\quad +c_{s}(\|U^{\nu}-\overline{U}^{\nu-1}\|_F+\|V^{\nu}-\widehat{V}^{\nu-1}\|_F).
\end{align} 
From Corollary \ref{corollary4.1} (iii), the expression of $\Xi^{\nu}$ and $a=\min(\frac{\overline{\gamma}}{4},\frac{\overline{\gamma}}{16\overline{\beta}^2})$, it follows that 
\begin{align*}
 \frac{\sqrt{a}}{2}\Xi^{\nu+1}&\le\sqrt{\Phi_{\lambda,\mu}(\overline{U}^{\nu},\overline{V}^{\nu})-\Phi_{\lambda,\mu}(\overline{U}^{\nu+1},\overline{V}^{\nu+1})}
 \stackrel{\eqref{ineq1-KLPhi}}{\le}\sqrt{{\rm dist}(0,\partial \Phi_{\lambda,\mu}(\overline{U}^{\nu},\overline{V}^{\nu}))\Gamma_{\nu,\nu+1}}\\
 &\stackrel{\eqref{bound-distuvk2}}{\le}\sqrt{c_{s}\Xi^{\nu}\Gamma_{\nu,\nu+1}}\le \frac{\sqrt{a}}{4}\Xi^{\nu}+\frac{c_s}{\sqrt{a}}\Gamma_{\nu,\nu+1}.
 \end{align*}
 Together with $\sum_{j=\widetilde{k}+2}^{\nu+1}\Xi^j
 =\Xi^{\nu+1}+\sum_{j=\widetilde{k}+2}^{\nu}\Xi^j$ and inequality \eqref{aim-ineq2}, we have
 \begin{align*}
 \frac{\sqrt{a}}{2}\sum_{j=\widetilde{k}+2}^{\nu+1}\Xi^j
  &\le\frac{\sqrt{a}}{4}\Xi^{\nu}+\frac{c_s}{\sqrt{a}}\Gamma_{\nu,\nu+1}+\frac{\sqrt{a}}{4}\sum_{j=\widetilde{k}+2}^{\nu}\Xi^{j-1}+\frac{c_s}{\sqrt{a}}\sum_{j=\widetilde{k}+2}^{\nu}\Gamma_{j-1,j}\\
 &=\frac{\sqrt{a}}{4}\sum_{j=\widetilde{k}+2}^{\nu+1}\Xi^{j-1}+\frac{c_s}{\sqrt{a}}\sum_{j=\widetilde{k}+2}^{\nu+1}\Gamma_{j-1,j}.
\end{align*}
This shows that \eqref{aim-ineq2} holds for $\nu+1$. Also, from \eqref{aim-ineq2} and the nonnegativity of $\varphi$, we have
\(
  \sum_{j=\widetilde{k}+2}^{\nu}\Xi^{j}\le\frac{1}{2}\sum_{j=\widetilde{k}+2}^{\nu}\Xi^{j-1}+\frac{2c_{s}}{a}\varphi\big(\Phi_{\lambda,\mu}(\overline{U}^{\widetilde{k}+2},\overline{V}^{\widetilde{k}+2})\!-\!\varpi^*\big),
\)
and consequently, 
\[
  \frac{1}{2}\sum_{j=\widetilde{k}+2}^{\nu}\Xi^{j-1}+\Xi^{\nu} \le \Xi^{\widetilde{k}+1}+\frac{2c_{s}}{a}\varphi\big(\Phi_{\lambda,\mu}(\overline{U}^{\widetilde{k}+2},\overline{V}^{\widetilde{k}+2})\!-\!\varpi^*\big).
\]
This, by Corollary \ref{corollary4.1} (i) and (iv) and Theorem \ref{converge-obj}, implies  $\sum_{j=\widetilde{k}+2}^{\nu}\Xi^j\le\delta/8$ (if necessary by increasing $\widetilde{k}$). Consequently,
\begin{align*}
  \|\overline{X}^{\nu+1}-\overline{X}^*\|
  &\le\|\overline{X}^{\widetilde{k}+1}-\overline{X}^*\|+\sum_{j=\widetilde{k}+2}^{\nu}\|\overline{X}^{j+1}-\overline{X}^{j}\|\le\|\overline{X}^{\widetilde{k}+1}-\overline{X}^*\|+\sum_{j=\widetilde{k}+2}^{\nu}\Xi^j\le\delta/2,\\
  \|\widehat{X}^{\nu+1}-\overline{X}^*\|
  &\le\|\widehat{X}^{\widetilde{k}+1}-\overline{X}^*\|+\sum_{j=\widetilde{k}+2}^{\nu}\big[\|\widehat{X}^{j+1}-\overline{X}^{j}\|+\|\overline{X}^{j}-\overline{X}^{j-1}\|+\|\widehat{X}^{j}-\overline{X}^{j-1}\|\big]\\
  &\le\|\widehat{X}^{\widetilde{k}+1}-\overline{X}^*\|+\sum_{j=\widetilde{k}+2}^{\nu}\big[\Xi^j+\Xi^{j-1}\big]\le\delta/4+\delta/4=\delta/2.
 \end{align*}
 The above two inequalities show that \eqref{aim-ineq1} holds for $\nu+1$. Thus, \eqref{aim-ineq1}-\eqref{aim-ineq2} hold for all $k\ge\widetilde{k}+2$. Since inequality \eqref{aim-ineq1} holds for $\nu+1$, using item (i) with $X=\overline{X}^{\nu+1}$ and $X'=\widehat{X}^{\nu+1}$ and following the same arguments as above, we have  
\begin{align*}\label{bound-distuvk3}
{\rm dist}\big(0,\partial \Phi_{\lambda,\mu}(\overline{U}^{\nu+1},\overline{V}^{\nu+1})\big)&\le c_{s}(\|\overline{X}^{\nu+1}-\overline{X}^{\nu}\|_F+\|\widehat{X}^{\nu+1}-\overline{X}^{\nu}\|_F)\nonumber\\
&\quad +c_{s}(\|U^{\nu+1}-\overline{U}^{\nu}\|_F+\|V^{\nu+1}-\widehat{V}^{\nu}\|_F).
\end{align*}  
This along with \eqref{aim-ineq1} shows that the desired conclusion holds.
\end{proof}
 \section{A line-search PALM method for \eqref{prob}}\label{secB}

 As mentioned in the introduction, the iterations of the PALM methods in \cite{Bolte14,Pock16,XuYin17} depend on the Lipschitz constants of $\nabla_{U}F(\cdot,V^k)$ and $\nabla_{V}F(U^{k+1},\cdot)$. An immediate upper estimation for them is $L_{f}\max\{\|V^k\|^2,\|U^{k+1}\|^2\}$, but it is too large and will make the performance of PALM methods worse. Here we present a PALM method by searching for tighter upper estimations for them. Fix any $(U,V)\in\mathbb{X}_r$ and $\tau>0$. For any $(U',V')\in\mathbb{X}_r$, define $\mathcal{Q}_{U}(U',V';U,V,\tau)$ and $\mathcal{Q}_{V}(U',V';U,V,\tau)$ by
 \begin{align*}
   \mathcal{Q}_{U}(U', V';U,V,\tau)\!:=F(U,V)\!+\!\langle\nabla_{U}F(U,V),U'\!-\!U\rangle\!+\!\frac{\tau}{2}\|U'\!-\!U\|_F^2\!+\!\frac{\mu}{2}\|U'\|^2_F\!+\!\lambda \vartheta(U'),\\
   \mathcal{Q}_{V}(U',V';U,V,\tau)\!:=\!F(U,V)\!+\!\langle\nabla_{V}F(U,V),V'\!-\!V\rangle\!+\!\frac{\tau}{2}\|V'\!-\!V\|_F^2\!+\!\frac{\mu}{2}\|V'\|^2_F\!+\!\lambda \vartheta(V').
 \end{align*}
 The iterations of the line-search PALM method are described as follows.
 \begin{algorithm}[h]
 \caption{\label{LSPALM}{\bf(A line-search PALM method)}}
 \begin{algorithmic} 
 \State{Input: $\varrho_1>1,\varrho_2>1$ and $0<\underline{\alpha}<\overline{\alpha}$.
 Choose and an initial point $(U^0,V^0)\in\mathbb{X}_r$.}
 \State{\textbf{For}  $k=0,1,2,\ldots$  \textbf{do}}
 \State{\qquad\  1. Select $\alpha_k\in[\underline{\alpha},\overline{\alpha}]$ and compute
    \[U^{k+1}\in\mathop{\arg\min}_{U\in\mathbb{R}^{n\times r}}\mathcal{Q}_{U}(U,V^k;U^k,V^k,\alpha_k).\]}
\State{\qquad\ 2. \textbf{While} {  $F(U^{k+1},U^k)>\mathcal{Q}_{U}(U^{k+1},V^k;U^k,V^k,\alpha_k)$}  \textbf{do}}
    \State{\qquad\qquad (i) $\alpha_{k}=\varrho_1\alpha_k$;}

          \State{\qquad\qquad (ii)  Compute $U^{k+1}\in\mathop{\arg\min}_{U\in\mathbb{R}^{n\times r}}\mathcal{Q}_{U}(U,V^k;U^k,V^k,\alpha_k)$
                }
   \State{\qquad\quad   \ \textbf{end(While)}}
\State{\qquad\ 3. Select $\alpha_k\in[\underline{\alpha},\overline{\alpha}]$.
  Compute
    \[V^{k+1}\in\!\mathop{\arg\min}_{V\in\mathbb{R}^{m\times r}}\mathcal{Q}_{V}(U^{k+1},V;U^{k+1},V^k,\alpha_k).\] }

   \State{\quad\quad4. \textbf{While} { $F(U^{k+1},V^{k+1})>\mathcal{Q}_{V}(U^{k+1},V^{k+1};U^{k+1},V^k,\alpha_k)$} \textbf{do}}
    \State{\qquad\qquad (i) $\alpha_{k}=\varrho_2\alpha_k$;}
          \State{\qquad\qquad (ii)  Compute $V^{k+1}\in\mathop{\arg\min}_{V\in\mathbb{R}^{m\times r}}\mathcal{Q}_{V}(U^{k+1},V;U^{k+1},V^k,\alpha_k)$. }
   \State{\qquad\quad   \ \textbf{end(While)}}
   \State{\textbf{end}}
 \end{algorithmic}
 \end{algorithm}

 \begin{remark}
  In the implementation of Algorithm \ref{LSPALM}, we use the Barzilai-Borwein (BB) rule \cite{Barzilai88} to capture the initial $\alpha_k$ in steps 1 and 3; for example, for $\alpha_k$ in step 1, we use
  \begin{equation*}
  \min\Big\{\max\Big\{ 		 \frac{\|\langle\nabla_{U}F(U^{k},V^k)-\nabla_{U}F(U^{k-1},V^k)\|_F}{\|U^{k}-U^{k-1}\|_F}\Big),\underline{\alpha}\Big\},\overline{\alpha}\Big\}.
 \end{equation*}
 \end{remark}




\end{appendices}



\begin{thebibliography}{1}

\bibitem{Attouch10}
 Attouch, H., Bolte, J., Redont, P., Soubeyran, A.:
 Proximal alternating minimization and projection methods for nonconvex problems: an approach
  based on the Kerdyka-{\L}ojasiewicz inequality.
  Mathematics of Operations Research, 35, 438-457 (2010)



  \bibitem{Bolte14}
  Bolte, J., Sabach, S., Teboulle, M.:
  Proximal alternating
  linearized minimization for nonconvex and nonsmooth problems. Mathematical
  Programming, 146, 459-494 (2014).




 \bibitem{Bhojanapalli16}
Bhojanapalli, S., Neyshabur, B., Srebro, N.:
 Global optimality of local search for low rank matrix recovery. Advances in Neural Information Processing Systems, 3873-3881 (2016)

 \bibitem{Barzilai88}
 Barzilai, J., Borwein, J. M.:
 Two-point step size gradient methods. IMA Journal of Numerical Analysis, 8, 141-148 (1988)


  \bibitem{BiTaoPan22}
  Bi, S. J., Tao, T., Pan, S. H.:
  KL property of exponent $1/2$ of $\ell_{2,0}$-norm and DC regularized factorizations for low-rank matrix recovery.
 Pacific Journal of Optimization, 18, 1-26 (2022)





  \bibitem{Cai13}
  Cai, T., Zhou, W. X.:
  A max-norm constrained minimization approach to 1-bit matrix completion.
 Journal of Machine Learning Research, 9, 3619-3647 (2013)

  

 
 \bibitem{Cao13}
    Cao, W., Sun, J., Xu, Z.:
  Fast image deconvolution using closed-form thresholding formulas of $L_q\ (q=\frac{1}{2},\frac{2}{3})$ regularization.
 Journal of Visual Communication and Image representation, 24, 31-41 (2013)

  \bibitem{Chen03}
   Chen, X., Tseng, P.:
 Non-Interior continuation methods for solving semidefinite complementarity problems.
  Mathematical Programming, 95, 431-474 (2003)

  \bibitem{Chen20}
  Chen, Y. X., Chi, Y. J., Fan, J. Q., Ma, C., Yan, Y. L.:
  Noisy matrix completion: understanding statistical guarantees for convex relaxation via nonconvex optimization.
  SIAM Journal on Optimization,
   30, 3098--3121 (2020)

  

   \bibitem{Chouzenoux16}
   Chouzenoux, E., Pesque, J. C., Repetti, A.:
   A block coordinate variable metric forward-backward algorithm.
  Journal of Global Optimization, 66, 457-485 (2016)

  \bibitem{Ding14}
   Ding, C., Sun, D. F., Toh, K. C.:
   An introduction to a class of matrix cone programming.
  Mathematical Programming, 144, 141-179 (2014)

   \bibitem{Dopico00}
    Dopico, F. M.:
    A note on sin  theorems for singular subspace variations. BIT Numerical Mathematics, 40(2), 395-403 (2000)


 \bibitem{Hastie15}
   Hastie, T., Mazumder, R., Lee, J. D., Zadeh, R.:
   Matrix completion and low-rank SVD via fast alternating least squares.
 Journal of Machine Learning Research, 16, 3367-3402 (2015)

  \bibitem{Jain13}
  Jain, P., Netrapalli, P., Sanghavi, S.:
  Low-rank matrix completion using alternating minimization.
 In Proceedings of the 45th annual ACM Symposium on Theory of Computing (STOC), 665-674 (2013)






\bibitem{Ochs19}
   Ochs, P.:
   Unifying abstract inexact convergence theorems and block coordinate variable metric iPiano.
  SIAM Journal on Optimization, 29, 541-570 (2019)

   \bibitem{Pock16}
  Pock, T., Sabach, S.: 
  Inertial proximal alternating linearized minimization (iPALM) for nonconvex and nonsmooth problems.
 SIAM review, 9, 1756-1787 (2016)





   \bibitem{RW98}
  Rockafellar, R. T., Wets, R. J-B.:
  Variational analysis. Springer, New York (1998).

 \bibitem{Shang20}
    Shang, F. H., Liu, Y. Y., Shang, F. J.:
  A unified scalable equivalent formulation for
schatten quasi-norms,
   Mathematics. 8, 1-19 (2020)

   


\bibitem{Srebro05}
   Srebro, N., Rennie, J. D. M., Jaakkola, T.:
   Maximum-margin matrix factorization.
  Advances In Neural Information Processing Systems, 17, (2005)

  \bibitem{Steward90}
  Stewart, G. W., Sun, J. G.:
   Maximum-margin matrix factorization.
  Academic Press, Boston (1990)

 \bibitem{SunLuo16}
  Sun, R. Y., Luo, Z. Q.:
  Guaranteed matrix completion via
non-convex factorization.
 IEEE Transactions on  Information Theory, 
  26(11), 6535-6579 (2016)
  
   \bibitem{TaoPanBi21}
    Tao, T., Pan, S. H., Bi, S. J.:
  Error bound of critical points and KL property of exponent $1/2$
  for squared F-norm regularized factorization.
   Journal of Global Optimization, 81, 991-1017 (2021)


  \bibitem{TaoQianPan22}
    Tao, T., Qian, Y. T., Pan, S. H.:
    Column $\ell_{2,0}$-norm regularized factorization model of low-rank matrix recovery and its computation.
   SIAM Journal on Optimization, 32, 959-988 (2022)

  \bibitem{XuYin17}
  Xu, Y. Y.,  Yin, W. T.:
   A globally convergent algorithm for nonconvex optimization based on block coordinate update.
  Journal of Scientific Computing, 72, 700-734 (2017)

  \bibitem{Xu12}
   Xu, Z., Chang, X., Xu, F., Zhang, H.:
 $\ell_{1/2}$ regularization: A thresholding representation
theory and a fast solver.
  IEEE Transactions on Neural Networks and Learning Systems, 23, 1013-1027 (2012)

\end{thebibliography}

\end{document}